\newtheorem{theorem}{Theorem}[section]
\newtheorem{proposition}[theorem]{Proposition}
\newtheorem{lemma}[theorem]{Lemma}
\newtheorem{remark}[theorem]{Remark}
\theoremstyle{definition}
\newtheorem{definition}[theorem]{Definition}
\newcommand{\pullbackcorner}[1][dr]{\save*!/#1-1.2pc/#1:(-1,1)@^{|-}\restore}
\newcommand{\cat}[1]{\mathbb{#1}}
\newcommand{\catc}{\cat{C}}
\newcommand{\set}{\mathbf{Set}}
\newcommand{\opcat}[1]{{#1}^\mathrm{op}}
\newcommand{\smcat}[1]{\mathcal{#1}}
\newcommand{\cod}{\operatorname{cod}}
\newcommand{\intv}{\mathbb{I}}
\newcommand{\fintv}{\tilde{\mathbb{I}}}
\newcommand{\nat}{\mathbb{N}}
\newcommand{\elu}{\mathrm{El}}
\newcommand{\elv}{\mathrm{El}}
\newcommand{\hequiv}{\operatorname{Equiv}}
\newcommand{\yoneda}{\mathbf{y}}
\newcommand{\preshf}[1]{\set^{\opcat{#1}}}
\newcommand{\izf}{\mathbf{IZF}}
\newcommand{\czf}{\mathbf{CZF}}
\newcommand{\llpo}{\mathbf{LLPO}}
\newcommand{\inacc}{\mathbf{Inacc}}
\newcommand{\klone}{\mathcal{K}_1}
\newcommand{\kltwo}{\mathcal{K}_2}
\newcommand{\pcaa}{\mathcal{A}}
\newcommand{\rt}{\mathbf{RT}}
\newcommand{\kv}{\mathcal{KV}}
\newcommand{\mc}{\mathbf{Mc}}
\newcommand{\refl}{\mathtt{refl}}
\newcommand{\iscontr}{\operatorname{IsContr}}
\title[Separating Path and Identity Types in Presheaves]
  {Separating Path and Identity Types in
  Presheaf Models of Univalent Type Theory}
\author{Andrew W Swan}
\begin{document}
\maketitle

\begin{abstract}
  We give a collection of results regarding path types, identity types
  and univalent universes in certain models of type theory based on
  presheaves.
  
  The main result is that path types cannot be used directly as
  identity types in any Orton-Pitts style model of univalent type
  theory with propositional truncation in presheaf assemblies over the
  first and second Kleene algebras.

  We also give a Brouwerian counterexample showing that there is no
  constructive proof that there is an Orton-Pitts model of type theory
  in presheaves when the universe is based on a standard construction
  due to Hofmann and Streicher, and path types are identity types. A
  similar proof shows that path types are not identity types in
  internal presheaves in realizability toposes as long as a certain
  universe can be extended to a univalent one.

  We show that one of our key lemmas has a purely syntactic variant in
  intensional type theory and use it to make some minor but curious
  observations on the behaviour of cofibrations in syntactic
  categories.
\end{abstract}

\section{Introduction}
\label{sec:intro}

In the cubical set model of homotopy type theory, and more generally
in members of the classes of models considered by Gambino and Sattler,
by Van den Berg and Frumin, and by Orton and Pitts, the most basic
notion of identity is that of path type. In that construction one uses
exponentiation with an interval object $\intv$. Given an object $X$,
we call $X^\intv$ the \emph{path type on $X$} and think of it as the
collection of paths between two elements of $X$. This can then be used
to produce path objects for any fibration $X \to Y$ via the mapping
path factorisation.

In order to interpret identity types in type theory as path types it
is necessary to show that the reflexivity (or ``constant paths'') map
$r^X \colon X \to X^\intv$ is a trivial cofibration. In many
natural examples however, it is difficult to show that this map
is a trivial cofibration, or even
just a cofibration. For example, in \cite{bchcubicalsets}, Bezem, Coquand
and Huber gave a definition of path type, but did not show how to
interpret identity types that strictly satisfy the $J$-computation
rule. In \cite{swannomawfs} the author gave both an explanation for
why this was difficult constructively as well as a solution. The
explanation was that the definition of fibration used in the BCH model
leads to an awfs where the trivial cofibrations always have pointwise
decidable image. However, when $X$ is the nerve of a complete metric
space, the map $r^X$ is essentially the inclusion of constant paths into
the set of all paths in the usual topological sense. One then gives a
Brouwerian counterexample to show that there is no constructive proof
that such inclusions are pointwise decidable. The solution was to use a
second, more elaborate construction to obtain identity types that do
satisfy the $J$-computation rule.

Of course, one way to prevent this Brouwerian counterexample from
causing problems is to use a different definition of fibration for
interpreting types. From an abstract point of view this can be
achieved by taking any monomorphism to be a cofibration, by
definition. The reflexivity map $r^X \colon X \to X^\intv$ is in
general a (split) monomorphism and so a cofibration. From here one can
show that it is in fact a trivial cofibration and thereby use it to
implement identity types. From a syntactic point of view it can be
achieved by adding so called \emph{regularity} or \emph{normality}
conditions to the definition of Kan operations, which state that
composition along a degenerate open box is the identity. Indeed, at
one point in the development of cubical type theory, Coquand and
collaborators did use such a regularity condition. Although this was
never published, it was in many ways very successful and did lead to a
version of type theory where path types are identity types, with some
higher inductive types and is believed to be consistent and have good
computational properties. The only problem with this approach, as
discovered by Dan Licata, is that it is completely unclear how to
construct a universe satisfying univalence. See
\cite{licataregdiscussion} for some informal discussion of this issue
online.

The aim of this paper is to give a wide class of counterexamples that
apply not just to one model, but to a range of different categories with
varying definitions of cofibration and thereby also varying
definitions of fibration. We will look at a class of structures based
on models of type theory in presheaves and in particular presheaf
assemblies (for instance cubical assemblies as defined by Uemura in
\cite{uemuracubasm}). We develop two basic techniques.
\begin{enumerate}
\item Using the assumption that path types are already identity types
  to show that certain maps have to be cofibrations.
\item Using a univalent universe to show that certain cofibrations are
  pointwise stable under double negation.
\end{enumerate}

We will then combine these to derive statements that are non
constructive and in realizability models outright false.

\subsection*{Acknowledgements}
\label{sec:acknowledgements}

I'm grateful for useful discussions and suggestions relating to this
topic from Benno van den Berg, Martijn den Besten, Thierry Coquand,
Nicola Gambino, Simon Huber, Peter Lumsdaine, Ian Orton, Andy Pitts,
Christian Sattler and Taichi Uemura.

\section{General Set Up}
\label{sec:general-set-up}

We work over a setting based on a definition due to Van den Berg and
Frumin in \cite{vdbergfrumin}, in turn based on a definition due to
Gambino and Sattler in \cite{gambinosattlerpi}. We assume that the
reader is already familiar with the notions used there such as
left/right lifting problems, wfs's and pushout products.  The Van den
Berg-Frumin definition can also be seen as a purely homotopical
reformulation of the Orton-Pitts axioms in
\cite{pittsortoncubtopos}. See e.g. \cite[Section 7.5.2]{swanliftprob}
for a discussion of the precise relation between these two
approaches. We weaken the Van den Berg-Frumin definition in a few
ways. Most importantly, we weaken the requirement that the underlying
category is a topos to locally cartesian closed category with finite
colimits. This is necessary to even include presheaf assemblies as an
example. We drop the requirement that cofibrations are classified by a
single universal cofibration, which is the object $\mathtt{Cof}$ in
the Orton-Pitts formulation, and $\Sigma$ in the the Van den
Berg-Frumin formulation.

We also have no strictness condition (Orton and Pitts' axiom
$\mathtt{ax}_9$), although there is an important point here. We will
derive some statements very much related to strictness from the
assumption that a univalent universe exists.

Essentially we consider locally cartesian closed categories with a
good notion of cofibration and interval object, where cofibrations are
closed under pullback and finite union, and generating trivial
cofibrations are given by the pushout product of a cofibration with an
endpoint inclusion. Formally, we state this as
follows.

Let $\catc$ be a locally cartesian closed category with all finite
colimits. We assume we are also given a class of maps whose elements
we call \emph{cofibrations} and an interval object
$\delta_0, \delta_1 \colon 1 \to \intv$. We use these to define the
following classes of maps.
\begin{definition}
  \begin{enumerate}
  \item We say a map is a \emph{trivial fibration} if it has the right
    lifting property against every cofibration.
  \item We say a map is a \emph{fibration}, if it has the right
    lifting property against $\delta_0 \hat{\times} m$ and $\delta_1
    \hat{\times} m$ for every cofibration $m$.
  \item We say an object $X$ is \emph{fibrant} if the unique map $X
    \to 1$ is a fibration.
  \item We say an object $X$ is \emph{cofibrant} if the unique map $0
    \to X$ is a cofibration.
  \item We say a map is a \emph{trivial cofibration} if it has the
    left lifting property against every fibration.
  \end{enumerate}
\end{definition}

We assume throughout that all of the following conditions are
satisfied.
\begin{enumerate}
\item We assume that cofibrations are closed under pullback.
\item We assume that cofibrations are closed under binary unions.
\item We are given connections on $\intv$ as defined in
  \cite{vdbergfrumin}.
\item We assume $\delta_0$ and $\delta_1$ are disjoint as subobjects
  of $\intv$.
\item We assume that $\delta_0$ and $\delta_1$ are cofibrations.
\item We assume that every map factors as a cofibration followed by a
  trivial fibration.
\item We assume that for every map $X$, the map $0 \to X$ is a
  cofibration. That is, every object is cofibrant.
\end{enumerate}

There are two main ways to satisfy the requirement that every map
factors as a cofibration followed by a trivial fibration. One way is
to assume that cofibrations form a dominance on $\catc$, which is the
case if the remaining Orton-Pitts axioms are assumed (this is the
approach taken by Van den Berg and Frumin). The other way is to assume
cofibrations are the left class in a cofibrantly generated wfs,
generated using a version of the small object argument. This could be
an approach using external transfinite colimits, such as Garner's
small object argument \cite{garnersmallobject}, but could also be an
internal version such as the one developed by the author in
\cite{swanwtypered}.

We require that path objects are, by definition, constructed using
exponentiation with the interval, as defined below.
\begin{definition}
  Given any object $X$, we define the \emph{path object on $X$}, to be
  the object $X^\intv$ (which we will also denote $PX$) together with
  the maps $r^X$, $p^X_0$ and $p^X_1$, where
  $r^X \colon X \to X^\intv$ is the constant map, and
  $p_0^X, p_1^X \colon X^\intv \to X^1 \cong X$ are given by
  composition with $\delta_0$ and $\delta_1$ respectively.
\end{definition}

The link between identity types in type theory and very good path objects
is one of the key ideas in homotopy type theory. See for example the
well known results of Gambino and Garner in \cite{gambinogarner} or
Awodey and Warren in \cite{awodeywarren}. In order for path types to
be used as identity types along these lines it is necessary for $r^X$
to be a trivial cofibration. We focus on the condition that $r^X$ is
just a cofibration, which of course follows from the assumption that
$r^X$ is a trivial cofibration\footnote{In fact with a little work
  one can show the converse also holds, but we don't need that here,
  since we are not constructing identity types but giving conditions
  that imply path objects are \emph{not} identity types.}.
\begin{definition}
  We say \emph{path types are identity types} if for every fibrant
  object $X$, the map $r^X$ is a cofibration.
\end{definition}

Recall that $P$ can be extended to a fibred functor over $\cod$ as
follows.

Given a map $f \colon X \to Y$, we define $P(f)$ to be given by the
pullback below, where the bottom map $Y \to P(Y)$ corresponds to the
projection $Y \times \intv \to Y$ under the adjunction.
\begin{equation*}
  \xymatrix{ P(f) \pullbackcorner \ar[r] \ar[d] & P(X) \ar[d] \\
    Y \ar[r] & P(Y)}
\end{equation*}
When $f$ is clear from the context, we will also write $P(f)$ as
$P_Y(X)$. One can further define maps $r^f \colon X \to Y$,
$p_0^f \colon P_Y(X) \to X$ and $p_1^f \colon P_Y(X) \to X$ to produce
a factorisation of the diagonal map $\Delta \colon X \to X \times_Y X$
in the slice category $\catc/Y$.

Note that for $Y = 1$, $P_Y(X)$ is just $P X$.

For some of our results, including the main theorem, we will need a
notion of propositional truncation. For this, we use the definitions
below.
\begin{definition}
  We say $f \colon X \to Y$ is an \emph{hproposition} if it is a
  fibration and the map $P_Y(X) \to X \times_Y X$ is a trivial
  fibration.
\end{definition}

We state below what it means for $\catc$ to have propositional
truncation, although technically we will never require this to hold
for $\catc$ itself. Instead we will assume that small maps are closed
under propositional truncation, in a sense that we will define later
(definition \ref{def:smallproptrunc}).
\begin{definition}
  We say $\catc$ \emph{has propositional truncation} if every fibration
  $f \colon X \to Y$ factors as follows, where $g$ is an
  hproposition and $i$ has the left lifting property against every
  hproposition.
  \begin{equation*}
    \xymatrix{ X \ar[rr]^f \ar[dr]_i & & Y \\
      & \| X \| \ar[ur]_g &}
  \end{equation*}
\end{definition}

We note that the axioms suffice to check a few basic propositions.
\begin{proposition}
  Every trivial cofibration is a cofibration.
\end{proposition}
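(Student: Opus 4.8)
The statement to prove is that every trivial cofibration is a cofibration. The plan is to use the factorisation axiom together with a standard retract argument. First I would take an arbitrary trivial cofibration $j \colon A \to B$. By axiom (6), we may factor $j$ as a cofibration $m \colon A \to C$ followed by a trivial fibration $q \colon C \to B$, so that $j = q \circ m$.

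Next I would observe that since $q$ is a trivial fibration, it has the right lifting property against every cofibration; in particular, since every object is cofibrant (axiom (7)), $q$ is in fact a fibration — because the pushout products $\delta_i \hat{\times} m$ are cofibrations when $m$ ranges over cofibrations, and hence $q$ lifts against them. (Alternatively one notes directly that $q$, having the RLP against all cofibrations, has the RLP against the particular cofibrations $\delta_i \hat{\times} m$.) Since $j$ is a trivial cofibration, it has the left lifting property against every fibration, in particular against $q$. The lifting problem given by $m \colon A \to C$ on top and $\mathrm{id}_B \colon B \to B$ on the bottom (which commutes because $q \circ m = j$ and $q \circ \mathrm{id}_B \circ j = j$) then has a diagonal filler $s \colon B \to C$ with $s \circ j = m$ and $q \circ s = \mathrm{id}_B$.

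From the filler $s$ I would build the usual retract diagram exhibiting $j$ as a retract of $m$: the square with $\mathrm{id}_A$ on top, $\mathrm{id}_A$ on bottom, $j$ on the left and $m$ on the right, composed with the square with $\mathrm{id}_A$ on top, $s$ on bottom, $m$ on the left and $j$ on the right — using $s \circ m = s \circ j \circ (\text{nothing})$; more precisely the bottom composite $B \xrightarrow{s} C \xrightarrow{q} B$ is $\mathrm{id}_B$ and the relevant squares commute. This displays $j$ as a retract of $m$ in the arrow category. Finally I would invoke the fact that cofibrations, being the left class of a weak factorisation system (equivalently, the class of maps with the left lifting property against all trivial fibrations — which follows from axioms (1)–(7), or can just be taken as part of the setup that cofibrations are closed under retracts), are closed under retracts; hence $j$ is a cofibration.

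The only delicate point is whether cofibrations are explicitly assumed closed under retracts. If the ambient axioms only say cofibrations are closed under pullback and binary union, one should instead argue that every cofibration has the LLP against every trivial fibration (immediate from the definition of trivial fibration), and conversely — using the factorisation axiom (6) and the retract argument above applied to an arbitrary map with the LLP against trivial fibrations — that these two classes coincide, so that "cofibration" is in fact a retract-closed class. I expect this identification of cofibrations with the left class of the (cofibration, trivial fibration) weak factorisation system to be the main thing to get right; once it is in place, the retract argument for $j$ is routine.
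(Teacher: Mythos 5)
Your overall shape is reasonable and you correctly identify the delicate point that cofibrations need to be closed under retracts for the retract argument to close (this is implicit in the paper's set-up: as noted in the discussion following the axiom list, the factorisation axiom is expected to come either from cofibrations forming a dominance or from a cofibrantly generated wfs, and in either case cofibrations are the left class of a wfs and hence retract-closed). However, you have a genuine gap at the one place where the paper's proof actually does work: you assert, without justification, that the generating trivial cofibrations $\delta_i \hat{\times} m$ are cofibrations, and this is exactly what needs to be shown. The paper's argument is that the pushout product $\delta_i \hat{\times} m$, regarded as a subobject of $B \times \intv$, is the union of the subobject $A \times \intv$ (a pullback of the cofibration $m$, hence a cofibration by axiom (1)) and the subobject $B \times \{i\}$ (a pullback of the cofibration $\delta_i$, also a cofibration), and cofibrations are closed under binary unions (axiom (2)). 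Your parenthetical ``alternatively one notes directly that $q$ \dots has the RLP against the particular cofibrations $\delta_i \hat{\times} m$'' is circular, since it presupposes the very fact in question. Also, your appeal to axiom (7) (every object is cofibrant) in this step is a red herring; cofibrancy of objects plays no role in showing $\delta_i \hat{\times} m$ is a cofibration. Once you supply the union argument, the rest of your proof is a more explicit version of what the paper compresses into the single sentence ``it suffices to show that every generating trivial cofibration is a cofibration.''
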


\begin{proof}
  It suffices to show that every generating trivial cofibration is a
  cofibration. That is, for every cofibration $m \colon A \to B$, and
  for $i = 0, 1$, $\delta_i \hat{\times} m$ is a cofibration. However,
  the pushout product $\delta_i \hat{\times} m$ can be viewed as a
  union of cofibrations into $B \times \intv$, and we assumed that
  cofibrations are closed under finite union.
\end{proof}

\begin{proposition}
  Every trivial fibration has a section.
\end{proposition}

\begin{proof}
  This easily follows from the assumption that every object is
  cofibrant.
\end{proof}

Recall from \cite[Section 3.1]{vdbergfrumin} that $\intv$ can be used
to define a notion of \emph{homotopy} and so also \emph{homotopy
  equivalence}, as well as the stronger notion of \emph{strong
  homotopy equivalence}.

\begin{proposition}
  \label{prop:trivfibshe}
  A fibration $f \colon X \to Y$ between fibrant objects $X$ and $Y$
  is a trivial fibration if and only if it is a strong homotopy
  equivalence.
\end{proposition}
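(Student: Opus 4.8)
The plan is to prove the two implications separately, the \emph{only if} direction being short and the \emph{if} direction carrying essentially all of the work.

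For \emph{only if}, suppose $f$ is a trivial fibration. By the previous proposition it has a section $s$, so $fs = \mathrm{id}_Y$. I would obtain a fibrewise homotopy from $\mathrm{id}_X$ to $sf$ by lifting $f$ against the map $X \times (1+1) \to X \times \intv$ in the square whose base is the projection $X \times \intv \to Y$ and whose top edge is the map $X + X \to X$ given by $\mathrm{id}_X$ and $sf$; the left map is a cofibration because $\delta_0$ and $\delta_1$ are disjoint cofibrations, cofibrations are closed under binary union and under pullback, and it is the pullback of $1+1 \to \intv$ along the projection to $\intv$. One then reads off that $f$ is a strong homotopy equivalence: the opposite homotopy may be taken to be the constant one since $fs = \mathrm{id}_Y$ holds strictly, and the coherence condition in the sense of \cite{vdbergfrumin} reduces precisely to the fibrewiseness just arranged.

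For \emph{if}, suppose $f$ is a fibration between fibrant objects and a strong homotopy equivalence. I would first use that a fibration has the homotopy lifting property --- the map $\delta_i \hat{\times}(0 \to Z)$ is a generating trivial cofibration isomorphic to $\delta_i^Z \colon Z \to Z \times \intv$ --- to replace the given homotopy inverse $g$ by an honest section: lift the homotopy $\mathrm{id}_Y \simeq fg$ so that it takes the value $g$ at the $fg$-end; its value at the other end is then a section $s$, and the lift exhibits $g \simeq s$, whence $sf \simeq \mathrm{id}_X$. The crucial step is to upgrade this to a \emph{fibrewise} homotopy $H \colon X \times \intv \to X$ between $sf$ and $\mathrm{id}_X$ with $fH = f\pi_1$. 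Here I would write $sf \simeq \mathrm{id}_X$ as a concatenation, apply $f$, and use the coherence of the strong homotopy equivalence to recognise the $f$-image as a homotopy at $f$ concatenated with its own reversal, hence nullhomotopic relative to its endpoints; the resulting filling in $Y$ can then be lifted along $f$ --- using that a fibration has fillers for open boxes in $\intv \times \intv$, which follows from the generating trivial cofibrations together with the connections --- and restricted to a face to give $H$.

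Finally I would invoke the key lemma that a fibration $f$ equipped with a section $s$ and a fibrewise homotopy between $sf$ and $\mathrm{id}_X$ is automatically a trivial fibration. Given a cofibration $m \colon A \to B$ and a square over it with edges $a \colon A \to X$ and $b \colon B \to Y$, the composite $sb \colon B \to X$ is a lift after composing with $f$, and agrees with $a$ along $m$ only up to the homotopy $H(a \times \intv)$; extending this homotopy from $A \times \intv$ to $B \times \intv$ by lifting $f$ against one of the generating trivial cofibrations $\delta_i \hat{\times} m$ and evaluating at the opposite endpoint yields the genuine diagonal filler. I expect the fibrewiseness step to be the main obstacle, since it is the only place where the fibrancy of both $X$ and $Y$ is genuinely used, and where one must be careful to invoke only open-box fillers built from the generating trivial cofibrations and the connections, so that no strictness axiom in the style of Orton--Pitts' $\mathtt{ax}_9$ is smuggled in. (If one is willing to cite more, this implication is essentially \cite{vdbergfrumin}, our setting being only a mild weakening of theirs and the argument never using the topos or dominance hypotheses.)
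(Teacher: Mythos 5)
The paper offers no proof of its own here; it simply cites \cite[Proposition 4.1]{vdbergfrumin}, so you are reconstructing the argument rather than paralleling a written-out one. Your reconstruction is the standard Vogt-style proof, which is in fact what the cited proposition does: for one direction, cofibrancy gives a section $s$ of the trivial fibration $f$, lifting $f$ against the cofibration $X\times(1+1)\rightarrow X\times\intv$ gives a fibrewise homotopy $\mathrm{id}_X\simeq sf$, and taking the other homotopy $fs\simeq\mathrm{id}_Y$ to be constant makes the coherence condition exactly the fibrewiseness just produced; for the converse, one replaces the homotopy inverse by a strict section by lifting against $\delta_i\hat{\times}(0\to Y)\cong\delta_i^Y$, upgrades the resulting homotopy $sf\simeq\mathrm{id}_X$ to a fibrewise one, and concludes by the lemma that a fibration admitting a fibrewise homotopy from $sf$ to $\mathrm{id}_X$ has RLP against every cofibration (the filler being the $\delta_0$-face of the lift of $f$ against $\delta_1\hat{\times}m$). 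All of these pieces are correct, and your statement of the final lemma is exactly right. The one place where your sketch is thinnest is precisely the step you yourself flag: the fibrewise upgrade. As written it is a sketch of a sketch --- ``recognise the $f$-image as a homotopy concatenated with its reversal, fill, lift back'' --- and a complete proof would have to spell out the open-box filling in $\intv\times\intv$ from the connections and the generating trivial cofibrations, and the bookkeeping for concatenation; this is also where the fibrancy of $X$ and $Y$ actually does work. Since the paper gives no competing argument, there is nothing to contrast against: your sketch is a faithful, compressed rendering of the cited proof, with the hard step honestly identified but not carried out.
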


\begin{proof}
  See \cite[Proposition 4.1]{vdbergfrumin}
\end{proof}

\begin{proposition}
  \label{prop:mappingpathfact}
  Every map between fibrant objects factors as a homotopy equivalence
  followed by a fibration.
\end{proposition}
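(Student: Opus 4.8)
The plan is to use the \emph{mapping path factorisation}. Given $f \colon X \to Y$ with $X$ and $Y$ fibrant, let $M_f$ be the pullback of $p_0^Y \colon PY \to Y$ along $f$, with projections $q \colon M_f \to X$ and $e \colon M_f \to PY$, and set $\pi \colon M_f \to Y$ to be $p_1^Y \circ e$. Since $p_0^Y \circ r^Y = \mathrm{id}_Y$, the pair $(\mathrm{id}_X, r^Y \circ f)$ induces a map $j \colon X \to M_f$ with $q \circ j = \mathrm{id}_X$ and $\pi \circ j = p_1^Y \circ r^Y \circ f = f$. So it suffices to prove that (a) $\pi$ is a fibration and (b) $j$ is a homotopy equivalence, whereupon $f = \pi \circ j$ is the required factorisation.

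For (a), the key input is that for fibrant $Y$ the map $(p_0^Y, p_1^Y) \colon PY \to Y \times Y$ is a fibration. The map $(\delta_0, \delta_1) \colon 1 + 1 \to \intv$ is a cofibration, being the union inside $\intv$ of the disjoint cofibrations $\delta_0$ and $\delta_1$; using this one shows, by the pushout-product/Leibniz-hom calculus, that $PY \to Y \times Y$ --- which is the Leibniz exponential of $(\delta_0,\delta_1)$ into $Y \to 1$ --- has the right lifting property against $\delta_i \hat{\times} m$ for every cofibration $m$, because the relevant pushout products of cofibrations are again cofibrations. Granting this, observe that $M_f$ is equally the pullback of $(p_0^Y, p_1^Y) \colon PY \to Y \times Y$ along $f \times \mathrm{id}_Y \colon X \times Y \to Y \times Y$, and under this identification $\pi$ is the composite $M_f \to X \times Y \xrightarrow{\mathrm{pr}_Y} Y$, where the first map is a pullback of $PY \to Y \times Y$ and the second is a pullback of $X \to 1$. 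Both $X \to 1$ and $PY \to Y \times Y$ are fibrations, and fibrations --- being defined by a right lifting property --- are closed under pullback and composition, so $\pi$ is a fibration.

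For (b), I would show $q$ is a homotopy inverse to $j$. We already have $q \circ j = \mathrm{id}_X$, so it remains to produce a homotopy from $j \circ q$ to $\mathrm{id}_{M_f}$. Using one of the connections $\wedge \colon \intv \times \intv \to \intv$ on $\intv$, define $H \colon M_f \times \intv \to M_f$ by $H\bigl((x,\gamma),\, t\bigr) = \bigl(x,\ s \mapsto \gamma(s \wedge t)\bigr)$ (written in terms of generalised elements; it is routine to render $H$ as an actual morphism using the evaluation and connection maps). This lands in $M_f$ since the path $s \mapsto \gamma(s \wedge t)$ has value $\gamma(\delta_0) = f(x)$ at $s = \delta_0$, because $\delta_0 \wedge t = \delta_0$ by the connection laws. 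By the same laws, $H(-, \delta_1) = \mathrm{id}_{M_f}$ (as $s \wedge \delta_1 = s$) and $H(-, \delta_0) = j \circ q$ (as $s \wedge \delta_0 = \delta_0$, so the path collapses to the constant path at $f(x)$). Hence $j$ is a homotopy equivalence.

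The main obstacle is the sub-claim in (a) that $PY \to Y \times Y$ is a fibration when $Y$ is fibrant --- everything else is formal bookkeeping. Within that, the delicate point is verifying that the pushout products of cofibrations appearing in the Leibniz-hom calculation are genuinely cofibrations; this needs that the defining pushouts are computed as honest unions of subobjects (so that closure of cofibrations under binary unions applies), which is the one place where the exactness of $\catc$ really enters. This is the standard lemma underlying the existence of path objects for fibrant objects in the Gambino--Sattler and Van den Berg--Frumin frameworks.
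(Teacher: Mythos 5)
Your proposal is correct and takes the same route as the cited source: the paper's ``proof'' is simply a citation of van den Berg--Frumin's Proposition 4.3, which is exactly the mapping-path-space factorisation you spell out. Both the key sub-lemma that $(p_0^Y, p_1^Y) \colon PY \to Y \times Y$ is a fibration for fibrant $Y$ (via the Leibniz calculus and closure of cofibrations under unions/pullbacks) and the connection homotopy exhibiting $X$ as a deformation retract of $M_f$ match the reference.
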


\begin{proof}
  See \cite[Proposition 4.3]{vdbergfrumin}.
\end{proof}

\begin{proposition}
  Dependent products preserve fibrations.
\end{proposition}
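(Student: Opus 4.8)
For a map $g \colon A \to B$ in $\catc$, local cartesian closure provides the dependent product functor $\Pi_g \colon \catc/A \to \catc/B$, right adjoint to pullback $g^* \colon \catc/B \to \catc/A$; the statement is that if $f \colon X \to A$ is a fibration then $\Pi_g f \colon \Pi_g X \to B$ is a fibration, i.e.\ has the right lifting property against $\delta_i \hat{\times} m$ for every cofibration $m \colon C \to D$ and every $i \in \{0,1\}$. My plan is to transpose lifting problems across the adjunction $g^* \dashv \Pi_g$. Given such an $m$ and a lifting problem of $\delta_i \hat{\times} m$ against $\Pi_g f$ with bottom edge $t \colon \intv \times D \to B$, I would regard $\intv \times D$ as an object of $\catc/B$ via $t$, the domain of $\delta_i \hat{\times} m$ as an object of $\catc/B$ via the restriction of $t$, and $\Pi_g X$ as an object of $\catc/B$ via $\Pi_g f$. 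The adjunction then carries this square bijectively, and naturally in solutions, to a lifting problem of $g^*(\delta_i \hat{\times} m)$ against $f$ in $\catc/A$, where $g^*(\delta_i \hat{\times} m)$ denotes the pullback along $g$ of $\delta_i \hat{\times} m$ with its codomain rebased along $t$. Hence it suffices to show that $g^*(\delta_i \hat{\times} m)$ is a trivial cofibration for every $m$, $i$ and $t$: then $f$, being a fibration, has the right lifting property against it by definition of trivial cofibration, and transposing the resulting lift back solves the original problem.

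It then remains to establish that claim, which has two easy ingredients and one serious one. First, $g^*(\delta_i \hat{\times} m)$ is a cofibration: the map $\delta_i \hat{\times} m$ is a cofibration, being a finite union of cofibrations into $\intv \times D$ exactly as in the proof that every trivial cofibration is a cofibration, and cofibrations are closed under pullback by hypothesis. Second, $g^*$ preserves finite limits (being a pullback) and finite colimits (having the right adjoint $\Pi_g$), so it commutes with the pushout-product construction; when $t$ factors through the projection $\intv \times D \to D$ this directly exhibits $g^*(\delta_i \hat{\times} m)$ as a generating trivial cofibration $\delta_i \hat{\times} g^*(m)$ in $\catc/A$, and in particular a trivial cofibration.

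The hard part will be the general case, where $t \colon \intv \times D \to B$ need not respect the cylinder structure on $\intv \times D$, so that $g^*(\delta_i \hat{\times} m)$ is not literally the pushout product of an endpoint inclusion with a cofibration. This is precisely where I expect the connections on $\intv$ to be needed: using the connection structure one shows that the class of trivial cofibrations is stable under the base change in question, so that $g^*(\delta_i \hat{\times} m)$ remains a trivial cofibration for arbitrary $t$. I would extract this closure property from the Orton--Pitts argument for fibrancy of $\Pi$-types, or equivalently from the pushout-product and Frobenius-style arguments of Gambino--Sattler and of Van den Berg--Frumin; granting that input, the proof is just the adjunction transposition described above.
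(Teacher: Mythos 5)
Your reduction is the standard one and matches the strategy of the paper's cited reference (Van den Berg--Frumin, Proposition~4.5): transpose across $g^* \dashv \Pi_g$ and show that each base change of a generating trivial cofibration $\delta_i \hat{\times} m$ is again a trivial cofibration, so that $f$ lifts against it. There are two issues. The first is a missing hypothesis: $g$ must itself be a fibration. This is how the proposition is actually used in the paper (to get $\fintv^\nat$ fibrant one takes $g$ to be $\nat \to 1$, which is a fibration because $\nat$ is fibrant), and it is essential rather than cosmetic: the projection along which you pull back $\delta_i \hat{\times} m$ is a base change of $g$, and trivial cofibrations are not stable under pullback along arbitrary maps. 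For instance, with $m$ an identity, $\delta_0 \colon 1 \to \intv$ is a generating trivial cofibration, yet its pullback along $\delta_1$ is $0 \to 1$ (by disjointness of the endpoints), which is a cofibration but not a trivial one, since not every fibrant object is pointed (e.g.\ $0$ is fibrant).

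The second issue is the genuine gap: the Frobenius-type stability you invoke --- pullback along a fibration preserves trivial cofibrations, or at least preserves the generators $\delta_i \hat{\times} m$ --- is the actual content of the proposition and is exactly where the connections on $\intv$ get used, yet you leave it unproved and defer to Orton--Pitts and Gambino--Sattler. The two steps you do check (that the base change is a cofibration, and that it is again a pushout product when the bottom map respects the cylinder) are precisely the ones requiring no connections at all. So what you have is a correct transposition of the problem into a result you take on faith, not a self-contained argument. Since the paper's ``proof'' is itself a bare citation you are at a comparable level of completeness, but you should be explicit that the heart of the matter is still outstanding in your write-up.
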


\begin{proof}
  See \cite[Proposition 4.5]{vdbergfrumin}.
\end{proof}

\begin{proposition}
  \label{prop:embedinfibrant}
  Every object is a subobject of a fibrant object.
\end{proposition}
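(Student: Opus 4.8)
The plan is to obtain the embedding directly from the factorisation axiom, with no transfinite construction needed. Given an arbitrary object $X$, I would apply the assumption that every map factors as a cofibration followed by a trivial fibration to the unique map $X \to 1$, obtaining $X \xrightarrow{i} Y \xrightarrow{p} 1$ with $i$ a cofibration and $p$ a trivial fibration. The claim is then that $Y$ is the required fibrant object and $i$ the required subobject inclusion.

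For fibrancy of $Y$, the key point is that every trivial fibration is a fibration. A trivial fibration has the right lifting property against all cofibrations; in particular, for any cofibration $m \colon A \to B$, the maps $\delta_0 \hat{\times} m$ and $\delta_1 \hat{\times} m$ are cofibrations, being finite unions of cofibrations inside $B \times \intv$, exactly as noted in the proof that trivial cofibrations are cofibrations. Hence a trivial fibration lifts against all such maps and is therefore a fibration; applying this to $p$, the map $Y \to 1$ is a fibration, i.e.\ $Y$ is fibrant.

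For the embedding itself, I would use that cofibrations are monomorphisms, which is part of the ambient setup — it is implicit already in the closure of cofibrations under binary unions of subobjects, and holds in each of the standard ways the factorisation axiom is realised. Thus $i \colon X \to Y$ exhibits $X$ as a subobject of a fibrant object. I do not expect any genuine obstacle here: the statement is a short consequence of the axioms, and the only point worth isolating is that trivial fibrations are fibrations, which is where the specific shape of the generating trivial cofibrations enters.
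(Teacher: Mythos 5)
Your proof is correct and follows exactly the paper's own argument: factor $X \to 1$ as a cofibration followed by a trivial fibration, and observe that the middle object is fibrant since trivial fibrations are fibrations, and the cofibration into it is a monomorphism. The paper states those two observations without justification, while you spell them out; the reasoning is the same.
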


\begin{proof}
  Given an object $X$, we factorise the map $X \to 1$ as a cofibration
  followed by a trivial fibration, to get $X \to X' \to 1$, where the
  map $X \to X'$ is a cofibration and in particular a monomorphism and
  the map $X' \to 1$ is a trivial fibration, and so in particular a
  fibration.
\end{proof}

\begin{proposition}
  Hpropositions are closed under arbitrary pullbacks.
\end{proposition}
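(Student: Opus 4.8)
The plan is to verify the two defining conditions of an hproposition separately, in each case reducing to the standard fact that any class of maps defined by a right lifting property against a fixed collection of maps is closed under pullback. So let $f \colon X \to Y$ be an hproposition, let $g \colon Y' \to Y$ be an arbitrary map, and form the pullback $f' \colon X' \to Y'$, where $X' = X \times_Y Y'$ and $f'$ is the projection. I must show that $f'$ is a fibration and that the canonical map $P_{Y'}(X') \to X' \times_{Y'} X'$ is a trivial fibration.

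For the first condition I would simply note that fibrations are by definition the maps with the right lifting property against $\delta_0 \hat{\times} m$ and $\delta_1 \hat{\times} m$ for every cofibration $m$, and any such class is closed under pullback; hence $f'$ is a fibration.

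For the second condition I would identify the map $P_{Y'}(X') \to X' \times_{Y'} X'$ with the pullback along $g$ of the map $P_Y(X) \to X \times_Y X$. This rests on the recalled fact that $P$ extends to a fibred functor over $\cod$, which in the present setting amounts to the observation that both $(-)^\intv$ and the pullback functor $g^*$ preserve pullbacks, so that from the defining square of $P(f)$ one obtains $P_{Y'}(X') \cong g^{*}\bigl(P_Y(X)\bigr)$; likewise $X' \times_{Y'} X' \cong g^{*}(X \times_Y X)$ since $g^*$ preserves finite limits, and the factorisation of the diagonal, which lives in $\catc/Y$, is carried by $g^*$ to the corresponding factorisation in $\catc/Y'$, so that the comparison map $P_{Y'}(X') \to X' \times_{Y'} X'$ is precisely $g^*$ applied to $P_Y(X) \to X \times_Y X$. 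Since $f$ is an hproposition the latter is a trivial fibration, and trivial fibrations --- being the maps with the right lifting property against every cofibration --- are again closed under pullback; hence $P_{Y'}(X') \to X' \times_{Y'} X'$ is a trivial fibration, as required.

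The only step requiring genuine care is the identification in the previous paragraph: one must keep track of which slice category each object and each exponential is formed in, and check that the comparison maps match up under reindexing. This is exactly the content of $P$ being a fibred functor over $\cod$ together with the diagonal factorisation being a factorisation in $\catc/Y$, both of which were recalled above, so no new work beyond bookkeeping is needed.
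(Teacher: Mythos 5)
Your proof is correct and follows essentially the same route as the paper, which simply notes that the result "follows from the fact that path types, fibrations and trivial fibrations are closed under pullbacks." You have unwound that one-line remark into a full verification: closure of the fibration class under pullback is the standard RLP argument, and the identification $P_{Y'}(X') \cong g^*(P_Y(X))$ together with $X' \times_{Y'} X' \cong g^*(X \times_Y X)$ is precisely what "path types are closed under pullbacks" (equivalently, $P$ being a fibred functor over $\cod$) means.
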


\begin{proof}
  This follows from the fact that path types, fibrations and trivial
  fibrations are closed under pullbacks.
\end{proof}

\begin{proposition}
  \label{prop:hpropwsectistrivfib}
  If $f \colon X \to Y$ is an hproposition and has a section then it
  is a trivial fibration.
\end{proposition}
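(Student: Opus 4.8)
The plan is to prove directly that $f$ has the right lifting property against an arbitrary cofibration $m \colon A \to B$. Write $s \colon Y \to X$ for the section, so $f s = \mathrm{id}_Y$. A lifting problem for $f$ against $m$ consists of maps $a \colon A \to X$ and $b \colon B \to Y$ with $f a = b m$, and the composite $s b \colon B \to X$ is an ``approximate filler'': it satisfies $f \circ s b = b$ on the nose, but its restriction along $m$ is $s b m = (s f) a$ rather than $a$. I would repair this discrepancy by homotoping $(s f) a$ back to $a$ along a fibrewise homotopy extracted from the hproposition structure, and then convert this correction into a genuine filler using the lifting property of the fibration $f$ against a generating trivial cofibration.

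In detail: since $f$ is an hproposition, $(p_0^f, p_1^f) \colon P_Y(X) \to X \times_Y X$ is a trivial fibration, hence, every object being cofibrant, it admits a section $\sigma$. As $f a = b m = f(s b m)$, the pair $(a, s b m)$ is a map $A \to X \times_Y X$; composing with $\sigma$, viewing the result as a map $A \to P_Y(X) \to X^{\intv}$ and transposing along the exponential adjunction gives $\ell \colon A \times \intv \to X$ with $\ell \circ (\mathrm{id}_A \times \delta_0) = a$, $\ell \circ (\mathrm{id}_A \times \delta_1) = s b m$ and $f \circ \ell = b m \circ \mathrm{pr}_A$, the last equation because a section of a map over $Y$ is itself over $Y$, so that $\sigma$ lands in the fibrewise path object. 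Next I would assemble a lifting problem for $f$ against $\delta_1 \hat{\times} m$, whose domain is the pushout $(A \times \intv) \cup_{A \times \{1\}} (B \times \{1\})$ sitting inside $B \times \intv$: take $\ell$ on $A \times \intv$ and the constant map $s b$ on $B \times \{1\}$, which agree on $A \times \{1\}$ precisely because $\ell \circ (\mathrm{id}_A \times \delta_1) = s b m$. On both pieces the composite into $Y$ is the restriction of $b \circ \mathrm{pr}_B \colon B \times \intv \to Y$, which serves as the bottom map of the square. Since $f$ is a fibration it lifts against $\delta_1 \hat{\times} m$, producing $G \colon B \times \intv \to X$ restricting to $\ell$ and to $s b$ as above and with $f \circ G = b \circ \mathrm{pr}_B$.

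Finally, $g := G \circ (\mathrm{id}_B \times \delta_0)$ is the required filler, since $g \circ m = \ell \circ (\mathrm{id}_A \times \delta_0) = a$ and $f \circ g = b \circ \mathrm{pr}_B \circ (\mathrm{id}_B \times \delta_0) = b$. I expect the step needing the most care to be the bookkeeping of the ``over $Y$'' conditions: checking that $\ell$ is genuinely fibrewise and that the two halves of the partial map on $(A \times \intv) \cup (B \times \{1\})$ glue to a map lying over $b \circ \mathrm{pr}_B$, since it is exactly this that licenses invoking the fibration property of $f$ at the last step. (Alternatively one can run the argument inside the slice category $\catc/Y$, where $f$ becomes a fibration between fibrant objects and an hproposition with a section is a strong homotopy equivalence in $\catc/Y$, so that Proposition~\ref{prop:trivfibshe} applies directly; that route instead requires checking that all the standing axioms descend to $\catc/Y$, which is routine but tedious.)
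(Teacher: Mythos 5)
Your proof is correct, but it takes a genuinely different route from the paper's. The paper gives a two-line retract argument: using the section $s \colon Y \to X$ and a section $t$ of $P_Y X \to X \times_Y X$, it exhibits $f$ as a retract of $p_1^f \colon P_Y(X) \to X$ via the map $t \circ \langle 1_X, s\circ f\rangle \colon X \to P_Y X$, and then invokes the fact that $p_1^f$ is a trivial fibration whenever $f$ is a fibration. Your proof instead attacks the lifting problem head-on: given a cofibration $m \colon A \to B$ and a square against $f$, you use $s$ to get an approximate filler $sb$, use the section $\sigma$ of $P_Y(X) \to X \times_Y X$ to build a fibrewise homotopy $\ell$ correcting the boundary discrepancy, and then solve a single lifting problem of $f$ against the generating trivial cofibration $\delta_1 \hat{\times} m$ to absorb the correction. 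Both arguments rest on the same two pieces of data ($s$ and a section of the fibrewise path space), but yours bypasses the intermediate fact that $p_1^f$ is a trivial fibration — it inlines that fact into the one explicit appeal to the fibration structure of $f$ — making it somewhat more self-contained at the cost of more bookkeeping over $Y$, as you rightly flag. Your parenthetical alternative (slice over $Y$ and invoke Proposition~\ref{prop:trivfibshe}) is also sound, and conceptually close to the retract argument the paper actually uses, modulo the routine verification that the ambient axioms pass to $\catc/Y$.
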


\begin{proof}
  Let $s \colon Y \to X$ be a section of $f$ and let
  $t \colon X \times_Y X \to P_Y X$ be a section of the map
  $P_Y X \to X \times_Y X$. We exhibit $f$ as a retract of the trivial
  fibration $p_1^f \colon P_Y(X) \to X$, which shows it is also a
  trivial fibration.
  \begin{equation*}
    \xymatrix{ X \ar[r]^{\langle 1_X, s \circ f \rangle} \ar[d]_f & X \times_Y
      X \ar[r]^t & P_Y X \ar[r]^{p_0^f} \ar[d]^{p_1^f} & X \ar[d]^f \\
      Y \ar[rr]_s & & X \ar[r]_f & Y}
  \end{equation*}
\end{proof}

\begin{lemma}
  \label{lem:mergeterms}
  Suppose we are given two monomorphisms $m_0 \colon A_0 \to B$ and
  $m_1 \colon A_1 \to B$, with at least one of $m_0$ and $m_1$ a
  cofibration, and an hproposition $f \colon X \to B$ together with two
  maps $t_0$ and $t_1$ in the following commutative diagram.
  \begin{equation*}
    \begin{gathered}
      \xymatrix{ & X \ar[d]^f & \\
        A_0 \ar[r]_{m_0} \ar[ur]^{t_0} & B & A_1 \ar[l]^{m_1} \ar[ul]_{t_1}}
    \end{gathered}
  \end{equation*}
  Write the union of $m_0$ and $m_1$ as $m \colon A_0 \cup A_1 \to B$.

  Then there is a map $t \colon A_0 \cup A_1 \to X$ making the
  following diagram commute.
  \begin{equation*}
    \begin{gathered}
      \xymatrix{ & X \ar[d]^f \\
        A_0 \cup A_1 \ar[r]_{m} \ar[ur]^{t} & B}      
    \end{gathered}
  \end{equation*}
\end{lemma}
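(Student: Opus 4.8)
The plan is to first replace $t_1$ by a homotopic map that genuinely agrees with $t_0$ on the intersection $A_0 \cap A_1$, and then glue using the presentation of the union as a pushout over that intersection. By symmetry we may assume that $m_0$ is the cofibration. Write $C := A_0 \cap A_1$ for the pullback of $m_0$ and $m_1$, with monomorphisms $j_0 \colon C \to A_0$ and $j_1 \colon C \to A_1$; since $j_1$ is a pullback of the cofibration $m_0$, it is itself a cofibration. The composites $t_1 \circ j_1$ and $t_0 \circ j_0$ from $C$ to $X$ have the same composite with $f$, namely the inclusion $C \hookrightarrow B$, so together they give a map $\langle t_1 j_1, t_0 j_0 \rangle \colon C \to X \times_B X$. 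Composing with a section of the trivial fibration $P_B(X) \to X \times_B X$ (such a section exists because $f$ is an hproposition and trivial fibrations have sections) gives a path $h \colon C \to P_B(X)$ with $p_0^f h = t_1 j_1$ and $p_1^f h = t_0 j_0$, and since $h$ factors through $P_B(X)$ this path lies over $B$. Transposing, $h$ becomes a homotopy $H_C \colon C \times \intv \to X$ with $H_C|_{C \times \{0\}} = t_1 j_1$, $H_C|_{C \times \{1\}} = t_0 j_0$, and $f \circ H_C$ equal to the composite $C \times \intv \to C \xrightarrow{m_1 j_1} B$.

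Next I would extend $H_C$ to a homotopy on all of $A_1$. The pair $(t_1, H_C)$ assembles into a well-defined map out of the domain of the pushout product $\delta_0 \hat{\times} j_1$, namely $(A_1 \times \{0\}) \cup (C \times \intv) \to X$, and it is compatible with $f$ over the map $m_1 \circ \mathrm{pr} \colon A_1 \times \intv \to A_1 \to B$. Since $f$ is a fibration and $j_1$ is a cofibration, $f$ has the right lifting property against $\delta_0 \hat{\times} j_1$, so we obtain $H \colon A_1 \times \intv \to X$ with $H|_{A_1 \times \{0\}} = t_1$, $H|_{C \times \intv} = H_C$ and $f \circ H = m_1 \circ \mathrm{pr}$. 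Setting $t_1' := H|_{A_1 \times \{1\}}$ yields a map $A_1 \to X$ with $f \circ t_1' = m_1$ and, crucially, $t_1' \circ j_1 = t_0 \circ j_0$.

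Finally, since $t_0$ and $t_1'$ agree on $C = A_0 \cap A_1$, the universal property of the pushout $A_0 \cup A_1 \cong A_0 +_C A_1$ gives the desired map $t \colon A_0 \cup A_1 \to X$ restricting to $t_0$ on $A_0$ and to $t_1'$ on $A_1$; that $f \circ t = m$ then follows by checking separately on $A_0$ and $A_1$, whose coprojections into the union are jointly epimorphic. The step I expect to need the most care is the first one: one must verify that the path extracted from the hproposition structure really lies over $B$, so that $H_C$, and then $H$, stays in the slice over $B$ and the open-box lifting problem for $f$ is genuinely compatible with $m_1 \circ \mathrm{pr}$. A more minor point, used silently, is the identification of the union of two subobjects with the pushout over their intersection, which is available in the categories we consider.
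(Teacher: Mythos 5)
Your proof is correct and follows the same high-level strategy as the paper's: adjust $t_1$ to a map $t_1'$ agreeing with $t_0$ on $A_0 \cap A_1$, then glue via the universal property of the union, presented as the pushout $A_0 +_{A_0 \cap A_1} A_1$. The mechanism for producing $t_1'$ differs. The paper pulls back $f$ along $m_1$, observes that $m_1^\ast(f)$ is an hproposition with a section induced by $t_1$, invokes proposition~\ref{prop:hpropwsectistrivfib} to conclude it is a trivial fibration, and solves a single lifting problem of the cofibration $A_0 \cap A_1 \to A_1$ against $m_1^\ast(f)$. You instead build the homotopy $t_1 j_1 \sim t_0 j_0$ over $B$ explicitly using a section of $P_B(X) \to X \times_B X$, and extend it over all of $A_1$ by lifting against the pushout product $\delta_0 \hat{\times} j_1$, taking the other endpoint as $t_1'$. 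These are reassemblies of the same ingredients: the paper's route hides the path-space and pushout-product reasoning inside proposition~\ref{prop:hpropwsectistrivfib} (whose retract argument rests on $p_1^f$ being a trivial fibration, itself a pushout-product fact), whereas your route makes that reasoning explicit inline. The paper's version is more modular and shorter on the page; yours is self-contained and makes clearer exactly which lifting properties of $f$ are used and where.
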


\begin{proof}
  Without loss of generality say that $m_0$ is a cofibration.
  
  First, observe that the result would be trivial if we knew that
  $t_0$ and $t_1$ agreed on $A_0 \cap A_1$. We therefore aim to
  produce a new map $t_1'$ ensuring that $t_0$ and $t_1'$ agree on
  $A_0 \cap A_1$.

  If we pullback $f$ along $m_1$, then the resulting map
  $m_1^\ast(f) \colon m_1^\ast(X) \to A_1$ is also an hproposition,
  and using $t_1$ we can show it has a section. We deduce by
  proposition \ref{prop:hpropwsectistrivfib} that it is a trivial
  fibration. Furthermore, observe that the inclusion
  $\iota_1 \colon A_0 \cap A_1 \rightarrow A_1$ is a pullback of
  $m_0$, and so a cofibration. We will define a lifting problem of
  $\iota_1$ against $m_1^\ast(f)$. Let
  $\bar{t_0} \colon A_0 \cap A_1 \to m_1^\ast(X)$ be the pullback of $t_0$
  along $m_1$, so that if $\pi_0 \colon m_1^\ast(X) \to X$ is one of
  the projections in the pullback then
  $\pi_0 \circ \bar{t_0} = t_0 \circ \iota_0$. Then let $j$ be a diagonal
  filler in the following diagram.
  \begin{equation*}
    \xymatrix{ A_0 \cap A_1 \ar[d]_{\iota_1} \ar[r]^{\bar{t_0}} &
      m_1^\ast(X) \ar[d]^{m_1^\ast(f)} \\
      A_1 \ar@{=}[r] \ar@{.>}[ur]|j & A_1}
  \end{equation*}
  We then define $t_1'$ to be $\pi_0 \circ j$. One can then verify
  that $f \circ t_1' = m_1$ and
  $t_1' \circ \iota_1 = t_0 \circ \iota_0$, and we can now easily
  define the required $t$ using the universal property of the union.
\end{proof}

\begin{proposition}
  \label{prop:discretedefs}
  Let $X$ be an object of $\catc$. The following are equivalent.
  \begin{enumerate}
  \item \label{reflisiso}The map $r^X \colon X \to X^\intv$ is an isomorphism.
  \item \label{reflisepi}The map $r^X \colon X \to X^\intv$ is a
    regular epimorphism.
  \item \label{allpathsconst}The statement ``every function
    $\intv \to X$ is constant'' holds in the internal language of
    $\catc$.
  \end{enumerate}
\end{proposition}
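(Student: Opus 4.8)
The plan is to reduce everything to the single structural fact that $r^X$ is always a split monomorphism: composing the constant path at an element of $X$ with the endpoint $\delta_0$ returns that element, so $p_0^X \circ r^X = \mathrm{id}_X$ (and likewise $p_1^X \circ r^X = \mathrm{id}_X$) — formally one checks this by transposing across the adjunction $(-)\times\intv \dashv (-)^\intv$, where $p_0^X \circ r^X$ becomes the projection $X \times 1 \xrightarrow{X\times\delta_0} X\times\intv \xrightarrow{\pi_X} X$, i.e. the canonical isomorphism $X \times 1 \cong X$. In particular $r^X$ is always a monomorphism, and it is an isomorphism if and only if it is in addition a split epimorphism, if and only if $r^X \circ p_0^X = \mathrm{id}_{X^\intv}$.

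Given this, I would prove (\ref{reflisiso}) $\Leftrightarrow$ (\ref{reflisepi}) by pure category theory: an isomorphism is a regular epimorphism, and conversely a map that is simultaneously a regular epimorphism and a monomorphism is an isomorphism — if $e$ is the coequaliser of a pair $u, v$ and $e$ is monic, then $eu = ev$ forces $u = v$, so $\mathrm{id}$ also coequalises the pair and $e$ is invertible. Since $r^X$ is monic by the previous paragraph, this gives (\ref{reflisepi}) $\Rightarrow$ (\ref{reflisiso}), and the other direction is immediate.

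It then remains to prove (\ref{reflisiso}) $\Leftrightarrow$ (\ref{allpathsconst}). Reading ``$f \colon \intv \to X$ is constant'' as $\exists x \colon X.\ \forall i \colon \intv.\ f(i) = x$ and using that $\intv$ carries the global point $\delta_0$ (so that such an $x$ can always be taken to be $f(\delta_0) = p_0^X(f)$), statement (\ref{allpathsconst}) is equivalent to the internal truth of $\forall f \colon X^\intv.\ \forall i \colon \intv.\ f(i) = f(\delta_0)$. Interpreting the two sides at the generic element $f = \mathrm{id}_{X^\intv}$ and transposing back along the exponential adjunction, this says precisely that the two maps $X^\intv \to X^\intv$ given by the identity and by $r^X \circ p_0^X$ agree, i.e. $r^X \circ p_0^X = \mathrm{id}_{X^\intv}$. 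By the first paragraph this is equivalent to (\ref{reflisiso}).

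I expect the only delicate point to be the interpretation of (\ref{allpathsconst}) in the present weakened setting — a locally cartesian closed category with finite colimits rather than a topos — where existential quantification is not available in full generality. The observation that $\intv$ has a global point is exactly what defuses this: it lets one eliminate the existential in favour of the explicit witness $p_0^X(f)$, after which the statement is merely an equality of two morphisms $X^\intv \to X^\intv$, which is meaningful in any cartesian closed category. (Alternatively, in any category with image factorisations one reads (\ref{allpathsconst}) as ``the image of $r^X$ is all of $X^\intv$''; since $r^X$ is a monomorphism its image is itself, so this again says $r^X$ is an isomorphism, and also makes the link with (\ref{reflisepi}) transparent.)
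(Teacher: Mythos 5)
Your proof is correct and rests on the same key observation as the paper's: that $r^X$ is always a split monomorphism with retraction $p_0^X = X^{\delta_0}$, from which the three conditions are all easily interderivable. The paper is considerably terser — it records only the split-mono fact, asserts that (\ref{reflisiso})$\Leftrightarrow$(\ref{reflisepi}) follows, and declares (\ref{reflisepi})$\Leftrightarrow$(\ref{allpathsconst}) "straightforward" — whereas you additionally spell out the standard "regular epi $+$ mono $\Rightarrow$ iso" lemma and carry out the internal-logic translation of (\ref{allpathsconst}) explicitly, including the useful point that the global section $\delta_0$ lets one eliminate the existential quantifier so that no image factorisations are needed.
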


\begin{proof}
  Note that $r^X$ is a split monomorphism in any case (with retraction
  $X^{\delta_0}$). It follows that \ref{reflisiso} and \ref{reflisepi}
  are equivalent.

  Showing \ref{reflisepi} and \ref{allpathsconst} are equivalent is
  straightforward.
\end{proof}

\begin{definition}
  We say an object $X$ is \emph{discrete} if one of the equivalent
  conditions in proposition \ref{prop:discretedefs} holds.
\end{definition}

\begin{proposition}
  \label{prop:connecteddefs}
  Let $X$ be an inhabited object of $\catc$. The following are
  equivalent.
  \begin{enumerate}
  \item The constant function map $2 \to 2^X$ is an isomorphism.
  \item The constant function map $2 \to 2^X$ is an epimorphism.
  \item The statement ``all functions from $X$ to $2$ are constant''
    holds in the internal language.
  \item (When $\catc$ has a subobject classifier) the following
    statement holds in the internal language ``if $U$ and $V$ are
    disjoint subobjects of $X$ such that $X = U \cup V$, then either
    $X = U$ or $X = V$.''
  \end{enumerate}
\end{proposition}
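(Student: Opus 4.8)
The plan is to follow the pattern of Proposition~\ref{prop:discretedefs}, the new ingredient being the translation between $2$-valued functions and complemented subobjects. Write $c \colon 2 \to 2^X$ for the constant function map; it is exactly the map obtained by applying $2^{(-)}$ to the unique map $!_X \colon X \to 1$. Since $X$ is inhabited, $c$ is a split monomorphism: a point $x_0 \colon 1 \to X$ induces a retraction $2^X \to 2^1 \cong 2$ by precomposition with $x_0$. (If one prefers to read ``inhabited'' as ``$!_X$ is a regular epimorphism'', then $c$ is instead a regular monomorphism, being the equalizer of the two maps $2^X \rightrightarrows 2^{X \times X}$ induced by the kernel pair of $!_X$, since $2^{(-)}$ turns colimits into limits.) In either case a monomorphism of this kind that is also an epimorphism is an isomorphism, which gives (1)$\Leftrightarrow$(2). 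For (2)$\Leftrightarrow$(3), observe that because $c$ is already a monomorphism, the conditions ``$c$ is an epimorphism'', ``$c$ is an isomorphism'' and ``$c$ is internally an isomorphism'' all coincide; and the last of these unwinds in the internal language to the statement that every $f \colon X \to 2$ lies in the image of $c$, i.e.\ is constant (the value being automatically unique since $X$ is inhabited). So (2) and (3) restate one another.

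For (3)$\Leftrightarrow$(4), assume $\catc$ has a subobject classifier, so that $\catc$ is an elementary topos and in particular coproducts in $\catc$ are disjoint. Given $f \colon X \to 2 = 1 + 1$, pulling the two coproduct injections back along $f$ produces subobjects $U, V \hookrightarrow X$ that are disjoint and satisfy $X = U \cup V$; conversely, a pair of disjoint subobjects $U, V$ with $X = U \cup V$ exhibits each of $U$ and $V$ as the complement of the other, hence exhibits $X$ as the disjoint coproduct $U + V$ and so determines a map $f \colon X \to 2$. These two constructions are mutually inverse, and under the resulting bijection $f$ is constant precisely when it factors through one of the injections, i.e.\ precisely when $X = U$ or $X = V$. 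Translating the internal statement in (3) along this correspondence yields the internal statement in (4), and conversely.

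The routine part is the chain (1)$\Leftrightarrow$(2)$\Leftrightarrow$(3), which is the same bookkeeping as in Proposition~\ref{prop:discretedefs}. The one step deserving care is the correspondence underlying (3)$\Leftrightarrow$(4): one must use disjointness of coproducts (equivalently, extensivity of the topos) both to see that a pair of disjoint subobjects covering $X$ is the same datum as a decomposition $X \cong U + V$, hence a map $X \to 2$, and to check that ``constant'' matches ``$X = U$ or $X = V$'' exactly, keeping track of the fact that internally every element of $1+1$ is in the left or the right copy.
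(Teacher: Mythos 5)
The paper states Proposition~\ref{prop:connecteddefs} without giving a proof (unlike Proposition~\ref{prop:discretedefs}, which at least gets a short comment), so there is nothing to compare your argument against; it is supplying a routine verification the author elected to omit. Your proof is correct and is the natural one.

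Two small remarks, neither a gap. First, for the chain (1)$\Leftrightarrow$(2)$\Leftrightarrow$(3) you rely on $c$ being a (split or regular) monomorphism so that ``epi implies iso''; it is worth noting that the paper's item (2) says ``epimorphism'' while the analogous item in Proposition~\ref{prop:discretedefs} says ``regular epimorphism''. In a general locally cartesian closed category with finite colimits the internal statement in (3) unwinds to $c$ being a regular epimorphism rather than merely an epimorphism, but since you have already established that $c$ is a split (or regular) mono, the conditions ``epi'', ``regular epi'', and ``iso'' for $c$ collapse, so nothing is lost. Second, for (3)$\Leftrightarrow$(4) you correctly invoke the subobject-classifier hypothesis to land in a topos, where disjointness of coproducts gives the bijection between maps $X \to 2$ and decompositions $X \cong U + V$; it might be worth flagging explicitly that this is exactly why the paper parenthetically restricts item (4) to the case where $\catc$ has a subobject classifier, since for a bare LCCC with finite colimits (e.g.\ presheaf assemblies, the paper's running example) the correspondence between $2$-valued maps and complemented subobjects need not be available.
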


\begin{definition}
  We say an inhabited object $X$ is \emph{connected} if one of the
  equivalent conditions in proposition \ref{prop:connecteddefs} holds.
\end{definition}

\begin{proposition}
  \label{prop:discreteproperties}
  \begin{enumerate}
  \item If $X$ has decidable equality and the interval is connected,
    then $X$ is discrete.
  \item If $X$ is discrete and $Y$ is any object, then $X^Y$ is
    discrete.
  \item If $X$ is discrete and $m \colon Z \rightarrowtail X$ is a
    subobject then $Z$ is discrete.
  \item \label{repintvconstdisc}
    If $\catc$ is a category of presheaves over a category
    $\smcat{C}$, the interval is representable and $\smcat{C}$ has
    finite products then every constant presheaf is discrete.
  \item \label{mapbtwndiscisfib}
    Every map between discrete objects is a fibration.
  \end{enumerate}
\end{proposition}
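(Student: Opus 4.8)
The plan is to prove the five items one at a time, each time reducing to the characterisation of discreteness in Proposition~\ref{prop:discretedefs}: an object is discrete precisely when the corresponding map $r^{(-)}$ is an isomorphism, equivalently when ``every function $\intv \to (-)$ is constant'' holds in the internal language. Items (1)--(4) are short; (5) needs slightly more care. For (1) I would reason internally. Given $f \colon \intv \to X$, decidability of equality on $X$ yields a function $\intv \to 2$ sending $i$ to the truth value of $f(i) = f(\delta_0)$; since $\intv$ is inhabited (it contains $\delta_0$) and connected, this function is constant, and as it takes the value ``true'' at $\delta_0$ it is constantly ``true'', so $f$ is constant and $X$ is discrete.

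For (2) I would use that exponentials compose and commute, giving isomorphisms $(X^Y)^\intv \cong X^{Y \times \intv} \cong X^{\intv \times Y} \cong (X^\intv)^Y$ under which $r^{X^Y}$ is identified with $(r^X)^Y$, the image of $r^X \colon X \to X^\intv$ under the functor $(-)^Y$; if $X$ is discrete then $r^X$ is an isomorphism, hence so is $(r^X)^Y$, hence so is $r^{X^Y}$. For (3), again internally: if $m \colon Z \rightarrowtail X$ and $g \colon \intv \to Z$, then $m \circ g$ is constant since $X$ is discrete, and $m$ being monic forces $g$ to be constant. For (4), write $\intv = \yoneda I$ with $I$ an object of $\smcat{C}$ and let $\Delta S$ denote a constant presheaf with value a set $S$; using the Yoneda lemma and the fact that $\yoneda$ preserves the finite products of $\smcat{C}$ one computes $(\Delta S)^\intv(c) \cong \mathrm{Hom}(\yoneda(c \times I), \Delta S) \cong (\Delta S)(c \times I) = S$, naturally in $c$ and with all restriction maps the identity, so $(\Delta S)^\intv \cong \Delta S$ and $r^{\Delta S}$ is the identity.

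For (5), let $f \colon X \to Y$ with $X$ and $Y$ discrete, fix a cofibration $m \colon A \to B$, an endpoint $i \in \{0,1\}$, and a lifting problem of $\delta_i \hat{\times} m$ against $f$, with top map $a$ defined on the domain $(\{i\} \times B) \cup_{\{i\} \times A}(\intv \times A)$ of the pushout product and bottom map $b \colon \intv \times B \to Y$. Since $X$ and $Y$ are discrete, every map from $\intv \times B$ (or from $\intv \times A$) into them factors uniquely through the projection $\pi_B$ onto $B$ (resp.\ the projection $\pi_A$ onto $A$); in particular $b = b' \circ \pi_B$ for a unique $b' \colon B \to Y$. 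I would then set $g \colon B \to X$ to be the restriction of $a$ along the isomorphism $\{i\} \times B \cong B$ and propose $d := g \circ \pi_B \colon \intv \times B \to X$ as the filler. Verifying that $d$ restricts to $a$ on the domain of $\delta_i \hat{\times} m$ breaks into two checks: over $\{i\} \times B$ it is $g$ by construction, and over $\intv \times A$ it is $g \circ m \circ \pi_A$, which equals $a|_{\intv \times A}$ because the latter, being a map into the discrete object $X$, is the unique factorisation through $\pi_A$ of its own restriction to $\{i\} \times A$, and that restriction coincides with $a|_{\{i\} \times B} \circ m = g \circ m$ by compatibility of $a$ over the intersection; these glue by the universal property of the union. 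Finally $f \circ d = f \circ g \circ \pi_B = b' \circ \pi_B = b$, using the bottom triangle of the original square.

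The only genuine obstacle is (5), and it is a soft one: the argument goes through only because discreteness forces the candidate filler to be essentially unique --- a map $\intv \times B \to X$ is the same thing as a map $B \to X$ --- so that producing a filler reduces to gluing the two restrictions of $a$, and this gluing succeeds precisely by the uniqueness half of ``every function $\intv \to X$ is constant'' together with the compatibility already built into the pushout. The remaining items are routine.
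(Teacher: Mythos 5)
Your proof is correct, and since the paper itself offers no argument for items (1)--(3) and (5) --- it simply declares them ``straightforward'' and refers item (4) to Uemura's Proposition~4.7 --- your fills are precisely the routine verifications being gestured at; in particular your (4) is Uemura's Yoneda computation, and your (5) correctly exploits that discreteness makes every map $\intv \times B \to X$ (resp.\ $\intv \times A \to X$, $\intv \times B \to Y$) factor uniquely through the projection, so the filler is forced and the two restrictions of the top map glue along $\{i\} \times A$ by the pushout compatibility.
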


\begin{proof}
  \ref{repintvconstdisc} was already observed by Uemura in
  \cite[Proposition 4.7]{uemuracubasm}. The rest are straightforward.
\end{proof}

Finally we recall the following notions of $\neg \neg$-stability,
density and separation.
\begin{definition}
  Let $m \colon X \rightarrow Y$ be a monomorphism in a category
  $\catc$.  We say $m$ is \emph{$\neg \neg$-stable} if the following
  statement holds in the internal language of $\catc$.
  \begin{equation*}
    \forall y \in Y, \; \neg \neg (\exists x \in X, m(x) = y) \,\rightarrow\,
    (\exists x \in X, m(x) = y)
  \end{equation*}

  We say $m$ is \emph{$\neg \neg$-dense} if the following
  statement holds in the internal language.
  \begin{equation*}
    \forall y \in Y, \; \neg \neg (\exists x \in X, m(x) = y)
  \end{equation*}

  We say an object $X$ is \emph{$\neg \neg$-separated} if the diagonal
  map $X \to X \times X$ is $\neg \neg$-stable, or equivalently, if
  the following statement holds in the internal language.
  \begin{equation*}
    \forall x, y \in X, \; \neg \neg(x = y) \,\rightarrow\, x = y
  \end{equation*}

  Suppose $\catc$ is a category of internal presheaves over an
  internal category $\smcat{C}$ in a locally cartesian closed category
  $\cat{E}$. We say a monomorphism $m$ in $\catc$ is \emph{pointwise
    $\neg \neg$-stable} if the underlying map in
  $\cat{E}/\operatorname{Ob}(\smcat{C})$ is $\neg \neg$-stable, or
  equivalently if the following statement holds in the internal
  language of $\cat{E}$: for every object $c$ of $\smcat{C}$, $m_c$ is
  $\neg \neg$-stable. We similarly define \emph{pointwise
    $\neg \neg$-dense} and \emph{pointwise $\neg \neg$-separated}.
\end{definition}

\section{Cofibrations when Path Types are Identity Types}
\label{sec:cofibr-when-path}

In this section we will use the assumption that path types are
identity types to show that certain maps are cofibrations.

\begin{lemma}
  \label{lem:constiscof}
  Suppose that $\intv$ is connected. Suppose that $\catc$ possesses a
  natural number object $\nat$. Suppose that path types are identity
  types. Then the map $1 \to 2^\nat$ given by $\lambda x.\lambda n.0$
  is a cofibration.
\end{lemma}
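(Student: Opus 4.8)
The plan is to realise the map $c := (\lambda x.\lambda n.0)\colon 1 \to 2^\nat$ as a pullback of the reflexivity map $r^X\colon X \to X^\intv$ for a well chosen fibrant object $X$. Since path types are identity types, $r^X$ is then a cofibration, and since cofibrations are closed under pullback by assumption, $c$ is a cofibration.

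First I would fix a fibrant object containing the interval: by Proposition~\ref{prop:embedinfibrant} there is a monomorphism $j\colon \intv \rightarrowtail \intv^+$ with $\intv^+$ fibrant. I would then set $X := (\intv^+)^\nat$. This is fibrant: $\nat$ has decidable equality (a standard consequence of the recursion principle of the natural number object) and $\intv$ is connected by hypothesis, so $\nat$ is discrete by Proposition~\ref{prop:discreteproperties}; as $1$ is discrete as well, the same proposition gives that $\nat \to 1$ is a fibration. Writing $X \to 1$ as $\prod_{\nat \to 1}$ applied to the pullback $\nat \times \intv^+ \to \nat$ of the fibration $\intv^+ \to 1$, the proposition that dependent products preserve fibrations then shows $X$ is fibrant.

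Now for the cospan exhibiting the pullback. Using the canonical isomorphism $2 \times \intv \cong \intv + \intv$ (valid as $\catc$ is cartesian closed), let $k\colon 2 \times \intv \to \intv$ be $[\mathrm{const}_{\delta_0}, \mathrm{id}_\intv]$, so that $k(0,i) = \delta_0$ for every $i$ while $k(1,i) = i$, and let $\bar{k}^{+}\colon 2 \to (\intv^+)^\intv$ be the transpose of $j \circ k$. Define $g\colon 2^\nat \to X^\intv$, using the identification $X^\intv \cong \bigl((\intv^+)^\intv\bigr)^\nat$, as postcomposition with $\bar{k}^{+}$ (so $g(\alpha) = \bar{k}^{+} \circ \alpha$), and let $v\colon 1 \to X$ be the family constantly equal to $j(\delta_0)$. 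I claim the square with top $v$, left $c$, right $r^X$ and bottom $g$ is a pullback. Commutativity holds because $\bar{k}^{+}(0)$ is the constant path at $j\delta_0$, so both composites $1 \to X^\intv$ are the constant path at $v$. For the universal property, suppose $u\colon T \to X$ and $w\colon T \to 2^\nat$ satisfy $r^X u = g w$. Then $g w$ is a constant path; evaluating it at $\delta_0$ shows $u = v \circ {!}_T$, and since a constant path has equal endpoints we get $j(\delta_0) = j(\iota(w(t)(n)))$ for all $t, n$, where $\iota = [\delta_0, \delta_1]\colon 2 \to \intv$. As $j$ is monic and $\iota$ is monic (by the disjointness of $\delta_0$ and $\delta_1$), this forces $w(t)(n) = 0$ for all $n$, i.e.\ $w = c \circ {!}_T$. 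Hence the comparison map to the pullback is an isomorphism, $c$ is a pullback of the cofibration $r^X$, and we are done.

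The step I expect to demand the most care is the verification of the universal property — in particular, making the "evaluate the path at its endpoints" reasoning precise (most cleanly via the internal language of the locally cartesian closed category, with $t$ ranging over generalised elements) and checking that $\iota\colon 2 \to \intv$ is monic, which comes down to the disjointness of the two endpoint subobjects together with coproducts in $\catc$ being disjoint and pullback-stable. A minor point worth flagging in passing is the decidability of equality on $\nat$, which is needed to invoke Proposition~\ref{prop:discreteproperties}; it is standard, but the ambient hypotheses here are weak (only a locally cartesian closed category with finite colimits and a natural number object). I note that the connections on $\intv$ are not used in this argument.
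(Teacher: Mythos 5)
Your proposal is correct and follows essentially the same route as the paper: embed $\intv$ in a fibrant object, pass to the fibrant exponential $(\intv^+)^\nat$ (using connectedness of $\intv$ to get $\nat$ discrete, hence fibrant), and exhibit $\lambda x.\lambda n.0$ as a pullback of the reflexivity map via the map sending $\alpha$ to the path that is constant at $j(\delta_0)$ where $\alpha(n)=0$ and tracks the interval where $\alpha(n)=1$, then using disjointness of the endpoints for the universal property. Your $g$ is exactly the paper's $e$, merely presented as the transpose of $j \circ [\mathrm{const}_{\delta_0}, \mathrm{id}_\intv]$, and your remaining comments (decidable equality of $\nat$, the internal-language reading of the verification) match what the paper uses implicitly.
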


\begin{proof}
  First, note that $\intv$ is not necessarily fibrant, but we can
  embed it in a fibrant object $\fintv$ using proposition
  \ref{prop:embedinfibrant}. Write $\iota$ for the inclusion $\intv
  \to \fintv$.

  Since $\intv$ is connected, $\nat$ is fibrant. Since dependent
  products preserve fibrations, $\fintv^\nat$ is also fibrant. Hence
  by the assumption that path types are identity types,
  $r^{\fintv^\nat}$ is a cofibration. We will show that the map $1 \to
  2^\nat$ is a cofibration by exhibiting it as a pullback of
  $r^{\fintv^\nat}$.

  We define the map $e \colon 2^\nat \to (\fintv^\nat)^\intv$ as
  below.
  \begin{equation*}
    e(\alpha)(i)(n) =
    \begin{cases}
      \iota(0) & \alpha(n) = 0 \\
      \iota(i) & \alpha(n) = 1
    \end{cases}
  \end{equation*}

  To check that $\lambda x.\lambda n.0$ is a pullback of
  $r^{\fintv^\nat}$ along $e$, it suffices to show, in the internal
  logic of $\catc$, that for all
  $\alpha \in 2^\nat$, $\alpha = \lambda n.0$ if and only if
  $e(\alpha)$ lies in the image of $r^{\fintv^\nat}$, which is the
  case if and only if $e(\alpha)$ is constant as a function
  $\intv \to \fintv^\nat$.

  It is easy to check that $e(\lambda n.0)$ is constant. Hence we just
  show the converse, that if $e(\alpha)$ is constant then for all $n
  \in \nat$, $\alpha(n) = 0$.

  So suppose that $\alpha \in 2^\nat$ and $e(\alpha)$ is constant. Let
  $n$ be an element of $\nat$. We know that $\alpha(n) = 0$, or
  $\alpha(n) = 1$, so to show that $\alpha(n) = 0$ it suffices to show
  that $\alpha(n) \neq 1$. Suppose that $\alpha(n) = 1$. Since
  $e(\alpha)$ is constant, for all $i, i' \in \intv$ we have
  $e(\alpha)(i)(n) = e(\alpha)(i')(n)$, and so $\iota(i) =
  \iota(i')$. In particular, applying this to the endpoints $0$ and
  $1$, we have $\iota(0) = \iota(1)$ and so $0 = 1$ since $\iota$ is
  monic. But then we get a contradiction by the disjointness of the
  endpoints. We have now shown $\alpha(n) \neq 1$, and so
  $\alpha(n) = 0$. This applies for arbitrary $n$, and so
  $\alpha = \lambda n.0$.
\end{proof}

We next show that if we have exact quotients then we can in fact show
that \emph{all} monomorphisms are cofibrations. This doesn't apply in
presheaf assemblies, but does work for the usual definition of
presheaves and in fact for any $\Pi$-pretopos.

\begin{theorem}
  \label{thm:allmonoscofs}
  Suppose that $\catc$ is a $\Pi$-pretopos (i.e. $\catc$ has exact
  quotients) and path types are identity types. Then any monomorphism
  is a cofibration.
\end{theorem}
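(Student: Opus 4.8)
The plan is to leverage Lemma~\ref{lem:constiscof} together with the extra structure of a $\Pi$-pretopos to upgrade ``one specific map is a cofibration'' into ``every monomorphism is a cofibration.'' The key observation is that in a $\Pi$-pretopos we have exact quotients, so arbitrary monomorphisms can be built up from cofibrations using operations (pullback, finite union, quotient) that interact well with the classes of maps in play. Concretely, I would first establish that the subobject $1 \hookrightarrow 2^\nat$ classifying the constantly-zero sequence, which Lemma~\ref{lem:constiscof} tells us is a cofibration, can be ``spread out'' to give a universal-looking cofibration: since cofibrations are closed under pullback, every subobject of every object that arises as a pullback of $\lambda x.\lambda n.0$ along some map into $2^\nat$ is again a cofibration. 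So the real content is showing that \emph{every} monomorphism $m \colon A \rightarrowtail B$ arises (up to the closure operations we have available) as such a pullback.

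The main step is therefore a representability argument. Given a mono $m \colon A \rightarrowtail B$, I want to produce a map $\chi \colon B \to 2^\nat$ such that $A$ is the preimage of the constantly-zero point. The obstacle is that $2^\nat$ is not a subobject classifier and $\intv$ need not be $2$; we only know $\intv$ is connected, which is precisely what made the construction in Lemma~\ref{lem:constiscof} work by forcing $\nat$ (and hence the relevant exponentials) to be fibrant and forcing constant-path statements to detect constancy of sequences. In a $\Pi$-pretopos, though, I can use exactness: for a general mono $m$ I can form the pushout $B +_A B$ (the cokernel pair), which in a pretopos is again effective, and the diagonal-type characterization of $A$ as the equalizer of the two coprojections $B \to B+_A B$. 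I would then transport this through the same mechanism as in Lemma~\ref{lem:constiscof} --- embedding an interval-like object into a fibrant one and using the hypothesis that path types are identity types --- applied not to $\fintv^\nat$ but to a suitable fibrant replacement of $\fintv^{B+_A B}$ or of $P(B+_A B)$, so that $A \rightarrowtail B$ appears as a pullback of a reflexivity cofibration $r^{(-)}$.

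The hard part will be arranging the exponentials and the internal-logic computation so that the preimage of ``constant'' is \emph{exactly} $A$ and not some larger $\neg\neg$-closure of it --- this is where exactness of quotients is essential (the same issue that makes the theorem fail in presheaf assemblies, as the remark after Lemma~\ref{lem:constiscof} warns). In the presence of exact quotients I expect the argument to go through: an equivalence relation generated by ``being in the same fibre of $m$'' has an effective quotient, and the canonical map $B \to B/{\sim}$ together with a fibrant replacement lets one express membership in $A$ as an equation of paths, which by the identity-types hypothesis is a cofibration, pulled back along a map into $B$. I would finish by checking, in the internal language of $\catc$, that this pullback recovers $m$ on the nose, using that $\catc$ being a pretopos makes monos, regular epis, and the relevant (co)limits behave as in $\set$; the remaining verifications (that the constructed objects are fibrant, that the square is a pullback) are then routine, exactly parallel to the end of the proof of Lemma~\ref{lem:constiscof}.
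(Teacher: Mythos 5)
Your overall strategy is right --- exhibit an arbitrary mono $m \colon A \rightarrowtail B$ as a pullback of a reflexivity map $r^X$ into a fibrant $X$, using exactness of quotients to make the internal-logic verification come out exactly rather than up to $\neg\neg$-closure --- and you correctly identify where exactness enters. But the construction you sketch does not actually produce the required classifying map, and this is the whole content of the theorem.

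The cokernel pair $B +_A B$ with its equalizer characterization of $A$ is a dead end here: you need a map $B \to X^\intv$, i.e.\ a family of \emph{paths} in $X$ indexed by $B$, and there is no natural path from $\iota_0(b)$ to $\iota_1(b)$ in (a fibrant replacement of) $B+_A B$ unless $b$ already lies in the image of $m$, which is exactly the thing we are trying to detect. Passing to $\fintv^{B+_A B}$ or $P(B+_A B)$ does not fix this: you still need to write down an element of the path object over each $b \in B$, and the cokernel pair gives you two \emph{points}, not a path between them. Your later suggestion of ``$B/{\sim}$ where $\sim$ is being in the same fibre of $m$'' also does not make sense as written, since $m$ is monic and its fibres are subsingletons, so this quotient is just $B$. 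What is missing is the key construction: the paper instead quotients $\intv \times B$ by the relation that collapses the entire interval-fibre $\{b\}\times\intv$ to a point precisely when $m^{-1}(\{b\})$ is inhabited, then embeds this quotient in a fibrant $X$ and takes $e(b) = \lambda i.\iota([(i,b)])$. This gives a \emph{canonical} path over every $b$, constant iff $b\in\operatorname{im}(m)$, and exactness of quotients is used at the single crucial step: to pass from $[(0,b)] = [(1,b)]$ in the quotient back to $(0,b)\sim(1,b)$, whence (by disjointness of endpoints) $m^{-1}(\{b\})$ is inhabited. Your proposal has all the surrounding intuition but omits the one construction that makes the pullback square exist; as written it would not compile into a proof without essentially inventing that quotient.
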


\begin{proof}
  Let $m \colon A \to B$ be any monomorphism. We need to
  show it is a cofibration.

  Working in the internal logic of $\catc$, we define an equivalence
  relation on $\intv \times B$. Given $(i, b)$ and $(i', b')$ in
  $\intv \times B$, we set $(i, b) \sim (i', b')$ if $b = b'$ and
  either $i = i'$ or $m^{-1}(\{b\})$ is inhabited. It is
  straightforward to verify that this is an equivalence relation.

  By proposition \ref{prop:embedinfibrant} there exists a fibrant
  object $X$ such that $\intv \times B / {\sim}$ is a subobject of
  $X$. Say $\iota \colon \intv \times B / {\sim} \to X$ is the
  subobject inclusion.

  We define $e \colon B \to X^\intv$ to be the map that sends each $b
  \in B$ to $\lambda i.\iota([(i, b)])$.

  Now, still reasoning internally in $\catc$, we show that for each $b
  \in B$, $m^{-1}(\{b\})$ is inhabited if and only if $e(b)$ lies in
  the image of $r^X$.

  Suppose first that $m^{-1}(\{b\})$ is inhabited. Then for any
  $i, i' \in \intv$, we have $[(i, b)] = [(i', b)]$. Hence
  $e(b) = \lambda i.\iota([(i, b)])$ is a constant function, and so
  lies in the image of $r^X$.

  Conversely, suppose that $e(b) = \lambda i.\iota([(i, b)])$ lies in
  the image of $r^X$. Then it is a constant function, and in
  particular we have $\iota([(0, b)]) = \iota([(1, b)])$, where $0$
  and $1$ are the images of $\delta_0$ and $\delta_1$
  respectively. Since $\iota$ is monic, we deduce $[(0, b)] = [(1,
  b)]$. Since quotients are exact, we now have $(0, b) \sim (1, b)$,
  and so either $0 = 1$, or $m^{-1}(\{b\})$ is inhabited. But we
  assumed the endpoints are disjoint and so we do have that
  $m^{-1}(\{b\})$ is inhabited as required.

  Since $r^X$ is monic, we can now deduce that $e$ fits into a
  pullback diagram as below.
  \begin{equation*}
    \xymatrix{ A \ar[d]_m \ar[r] \pullbackcorner & X \ar[d]^{r^X} \\
      B \ar[r]_e & X^\intv}
  \end{equation*}
  This witnesses $m$ as a pullback of the cofibration $r^X$, and so is
  itself a cofibration, as required.
\end{proof}

\section{Cofibrations and Univalent Universes}
\label{sec:cofibr-univ}

We specialise to the case where $\catc$ is a category of internal
presheaves. That is, we fix a locally cartesian closed category
$\cat{E}$ with all finite colimits and an
internal category $\smcat{C}$ in $\cat{E}$. We then take $\catc$ to be
the category of internal presheaves over $\smcat{C}$. We will follow
the convention that $\catc$ is non trivial, in the sense that
$\smcat{C}$ has at least one object.

\begin{definition}
  Suppose we are given a map $\elu \colon \tilde{U} \to U$. We say a
  map $f \colon X \to Y$ is \emph{$U$-small} if it is a pullback of
  $\elu$ along a map $g \colon Y \to U$. We will refer to such a $g$
  as a \emph{classifying map} for $f$.
\end{definition}

\begin{definition}
  We say $\elu \colon \tilde{U} \to U$ is a \emph{universe} if the
  following hold.
  \begin{enumerate}
  \item Every isomorphism is $U$-small.
  \item $U$-Small maps are closed under composition.
  \item $U$-Small maps are closed under dependent product.
  \item $U$-Small maps are closed under pairwise coproduct.
  \item $U$-Small maps are closed under mapping path spaces.
  \end{enumerate}

  We say a universe $\elu \colon \tilde{U} \to U$ is a
  \emph{homotopical universe} if in addition to the above, we have the
  following.
  \begin{enumerate}
  \item $\elu$ is a fibration (or equivalently every $U$-small map is
    a fibration).
  \item $U$ is fibrant.
  \end{enumerate}
\end{definition}

\begin{definition}
  \label{def:smallproptrunc}
  We say a homotopical universe $U$ is \emph{closed under propositional
    truncation} if every $U$-small fibration $f$ factors as a map with
  the left lifting property against all hpropositions followed by a
  $U$-small hproposition.
\end{definition}

We will often view $\elu \colon \tilde{U} \to U$ as a family of types
$\elu(x)$ indexed by the elements $x$ of $U$.

Note that using path objects and dependent products, we can translate
one of the definitions of equivalence from type theory (as appear for
instance in \cite[Chapter 4]{hottbook}) into the formulation we are
using here. We write $\hequiv(X, Y)$ for the object of equivalences
from $X$ to $Y$. Observe that for any of the usual definitions it is
straightforward to show that every isomorphism is an equivalence. We
use $\hequiv(X, Y)$ to define univalence as follows.
\begin{definition}
  We say a homotopical universe $\elu \colon \tilde{U} \to U$ is
  \emph{univalent} if the first projection
  $\Sigma_{X : U} \Sigma_{Y : U} \hequiv(X, Y) \to U$ is a trivial
  fibration.
\end{definition}

\begin{remark}
  In any case $\Sigma_{X : U} \Sigma_{Y : U} \hequiv(X, Y) \to U$ is a
  fibration by our other conditions. Hence by proposition
  \ref{prop:trivfibshe} it is a trivial fibration if and only if the
  statement that it is contractible holds in the model. This is
  equivalent to the univalence axiom holding in the model.
\end{remark}

\begin{definition}
  We fix a map $u \colon 1 \to U$ such that the following is a
  pullback, and refer to it as the \emph{unit type}.
  \begin{equation*}
    \xymatrix{ 1 \ar[r] \ar[d] \pullbackcorner & \tilde{U} \ar[d] \\
      1 \ar[r]_u & U}
  \end{equation*}

  When $\catc$ is a category of (possibly internal) presheaves over a
  category $\smcat{C}$ we use the following notation.
  Since $u$ is a global section of the presheaf $U$, we can think of
  it as a choice of elements $u(c)$ for each object $c$ in the
  category $\smcat{C}$.
\end{definition}

We clearly have the following proposition.
\begin{proposition}
  Suppose that $f \colon X \to Y$ is a pullback of $\elu \colon
  \tilde{U} \to U$ along a map of the form $u \circ !_Y$, where $!_Y$
  is the unique map $Y \to 1$. Then $f$ is an isomorphism.
\end{proposition}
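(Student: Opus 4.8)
The plan is to deduce this directly from the pasting lemma for pullbacks together with the defining property of the unit type. First I would recall that, by the definition of $u$, the square
\begin{equation*}
  \xymatrix{ 1 \ar[r] \ar[d] \pullbackcorner & \tilde{U} \ar[d]^{\elu} \\
    1 \ar[r]_u & U}
\end{equation*}
is a pullback, and hence the left-hand vertical map is (up to isomorphism) the identity $1 \to 1$, which in particular is an isomorphism.

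Next I would factor the classifying map of $f$ as $u \circ {!_Y} \colon Y \to 1 \to U$ and build the pullback of $\elu$ along it in two stages: first pull back $\elu$ along $u$, obtaining by the previous paragraph the identity $\mathrm{id}_1 \colon 1 \to 1$; then pull this back along $!_Y \colon Y \to 1$, obtaining a map $f' \colon X' \to Y$. By the two-pullbacks lemma the resulting outer rectangle is a pullback of $\elu$ along $u \circ {!_Y}$, so $f'$ is isomorphic to $f$ as an object of $\catc/Y$; in particular $f$ is an isomorphism if and only if $f'$ is.

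Finally, since isomorphisms are stable under pullback, $f'$ — being a pullback of $\mathrm{id}_1$ along $!_Y$ — is an isomorphism (indeed it is just $\mathrm{id}_Y$), and therefore so is $f$. There is no real obstacle here: the only points to take care over are the direction in which the pasting lemma is applied and the stability of isomorphisms under pullback, both of which are entirely standard, which is why the proposition was flagged as clear.
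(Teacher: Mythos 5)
Your proof is correct and is exactly the standard argument the paper is implicitly invoking when it prefaces the proposition with ``We clearly have'': the paper itself gives no proof, and your two-step decomposition (pull back $\elu$ along $u$ to get $\mathrm{id}_1$ by the defining square of the unit type, then pull $\mathrm{id}_1$ back along $!_Y$ and paste) together with pullback-stability of isomorphisms is precisely how one would spell out the omitted reasoning. Nothing to add or correct.
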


We will also need the following observations.
\begin{proposition}
  \label{prop:coftofib}
  Let $U$ be a homotopical universe. Suppose that $m \colon A \to B$
  is a cofibration, $A$ and $B$ are both $U$-small objects and $B$ is
  discrete. Then $m$ is a small fibration and furthermore an
  hproposition.
\end{proposition}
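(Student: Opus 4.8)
The plan is to reduce the whole statement to one observation: because $B$ is discrete, the path object $P_B(A)$ collapses to $A$, after which both the hproposition condition and (via closure under mapping path spaces) smallness can be read off directly. First I would note that $m$, being a cofibration, is a monomorphism, so $A$ is a subobject of $B$ and hence discrete by Proposition~\ref{prop:discreteproperties} (subobjects of discrete objects are discrete). Since the terminal object is discrete and every map between discrete objects is a fibration (Proposition~\ref{prop:discreteproperties}(\ref{mapbtwndiscisfib})), both $A$ and $B$ are fibrant and $m$ is itself a fibration.

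Next I would compute $P_B(A)$. Discreteness of $A$ and $B$ makes $r^A$ and $r^B$ isomorphisms, so $A^\intv \cong A$ and $B^\intv \cong B$ with inverse given by evaluation at the endpoint $\delta_0$; under these identifications the map $A^\intv \to B^\intv$ induced by $m$ is $m$ itself and the structure map $B \to B^\intv$ is $1_B$. The pullback defining $P_B(A)$ then degenerates to $A \times_B B$, so $P_B(A) \cong A$, and under this isomorphism $p_0^m$, $p_1^m$ and $r^m$ are all the identity on $A$ while the projection $P_B(A) \to B$ is $m$. Since $m$ is monic the diagonal $A \to A \times_B A$ is an isomorphism, so $\langle p_0^m, p_1^m \rangle \colon P_B(A) \to A \times_B A$ is an isomorphism, in particular a trivial fibration; together with $m$ being a fibration this gives that $m$ is an hproposition.

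Finally, for smallness I would use that $A$ and $B$ are $U$-small objects: applying closure of $U$-small maps under mapping path spaces to $m \colon A \to B$ shows that the fibration $P_B(A) \to B$ in the mapping path factorisation of $m$ is $U$-small, and since this map is isomorphic over $B$ to $m$, and isomorphisms are $U$-small and $U$-small maps are closed under composition, $m$ is $U$-small, hence a small fibration.

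The step I expect to take the most care is this last one: pinning down exactly how the mapping-path-space closure axiom applies so that it yields $U$-smallness of $m$ itself rather than of some fibrant or cofibrant replacement — this is precisely where discreteness of $B$ does the work, since it is what forces $P_B(A) \cong A$ with projection equal to $m$. The computation of $P_B(A)$ and of its structure maps in the second paragraph is routine but should be carried out explicitly (e.g. in the internal language) to be sure that $p_0^m$, $p_1^m$, $r^m$ and the projection to $B$ are all identified as claimed.
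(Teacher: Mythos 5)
Your overall strategy matches the paper's: use discreteness to collapse the relevant path-space construction down to $A$ itself, then read off the conclusions. The hproposition part is fine (in fact the paper leaves it implicit): $m$ is a monic fibration, so the diagonal $A \to A \times_B A$ is an isomorphism, and your computation that $P_B(A) \cong A$ with $p_0^m$, $p_1^m$ both the identity correctly shows that $\langle p_0^m, p_1^m\rangle$ is an isomorphism, hence a trivial fibration.

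The smallness step, however, is circular as phrased. You write that you would "apply closure of $U$-small maps under mapping path spaces to $m$," but that closure axiom has the form: if $f$ is already a $U$-small \emph{map}, then its mapping path space construction yields a $U$-small map. At this point you only know $A$ and $B$ are $U$-small \emph{objects}; whether $m$ is a $U$-small map is precisely what you are trying to prove, so you cannot feed $m$ into the axiom. The paper avoids this by \emph{constructing} a small fibration from scratch: working internally, it defines $A'(b) := \Sigma_{a : A}\, m(a) = b$ where the equality is the path type of $B$. This is a $U$-small fibration because $A$ and $B$ are small objects and the universe is closed under $\Sigma$-types and path types applied to small types — no prior smallness of $m$ is needed. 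Then discreteness of $B$ gives $A' \cong A$ over $B$, so $m$ is small. So the fix is a change in how you justify smallness of the intermediate object, not in the eventual isomorphism.

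A secondary, smaller slip: $P_B(A)$ is not the mapping path space of $m$; the mapping path factorisation of $m$ uses $M_m = A \times_B B^\intv$ (fibrewise $\Sigma_{a:A}\, m(a) = b$) as its middle term, not the vertical path object $P_B(A) = B \times_{P(B)} P(A)$. It happens that both become isomorphic to $A$ when $A$ and $B$ are discrete, which is why your computation still lands in the right place, but keeping them distinct matters precisely because the universe's closure axiom is stated for the mapping path space, not for $P_B(A)$. You flag at the end that this last step needs the most care — that instinct is correct, and the care required is exactly to route the smallness through $\Sigma$ and path types on the small objects $A$ and $B$ rather than through a closure property of the map $m$.
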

\begin{proof}
  We first note that since $B$ is discrete, the subobject $A$ must be
  too.  Since $m$ is a map between discrete objects it is a fibration
  by proposition \ref{prop:discreteproperties}. However, we still need
  to show that it is a small fibration. We first replace $A$ with the
  mapping path space, which we view as a small fibration $A' \to
  B$. Explicitly, we can define $A'$ as a type internally in type
  theory with the following definition (where the equality is
  implemented using path types).
  \begin{equation*}
    A'(b) := \Sigma_{a : A} m(a) = b
  \end{equation*}
  This gives a well defined small fibration since $B$ and $A$ are small
  types, and universes are closed under path types. We now note that
  since $B$ is discrete, we in fact have an isomorphism $A' \cong A$, and so
  $m$ is a small fibration as required.
\end{proof}

\begin{lemma}
  \label{lem:unionhunionequiv}
  Let $U$ be a homotopical universe closed under propositional
  truncation. Suppose that $m_i \colon A_i \to B$ are cofibrations,
  $A_i$ and $B$ are $U$-small objects and $B$ is discrete (but
  $A_0 \cup A_1$ is not necessarily small). By proposition
  \ref{prop:coftofib} a $U$-small propositional truncation
  $\| A_0 + A_1 \|$ exists. Then we have maps
  $A_0 \cup A_1 \to \| A_0 + A_1 \|$ and
  $\| A_0 + A_1 \| \to A_0 \cup A_1$ forming commutative triangles in
  the following diagram.
  \begin{equation*}
    \xymatrix{ A_0 \cup A_1 \ar@/^/[rr] \ar[dr] & & \| A_0 + A_1 \|
      \ar@/^/[ll] \ar[dl] \\
      & B & }
  \end{equation*}
\end{lemma}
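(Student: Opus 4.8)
The plan is to construct the two maps separately, exploiting the two devices the hypotheses provide: Lemma~\ref{lem:mergeterms} for the map \emph{out of} the union $A_0 \cup A_1$, and the left lifting property of the truncation unit against hpropositions for the map \emph{out of} $\| A_0 + A_1 \|$. Write $[m_0,m_1] \colon A_0 + A_1 \to B$ for the fold map, $\mathrm{in}_k \colon A_k \to A_0 + A_1$ for the coproduct inclusions, $m \colon A_0 \cup A_1 \to B$ for the union inclusion, and $\iota_k \colon A_k \to A_0 \cup A_1$ for the canonical factors, so $m \circ \iota_k = m_k$. Since, as noted in the statement, a $U$-small propositional truncation of $A_0 + A_1$ over $B$ exists, the closure-under-truncation hypothesis gives a factorisation $A_0 + A_1 \xrightarrow{j} \| A_0 + A_1 \| \xrightarrow{g} B$ of $[m_0,m_1]$ in which $j$ has the left lifting property against every hproposition and $g$ is a ($U$-small) hproposition.

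For the first map I would apply Lemma~\ref{lem:mergeterms} to the hproposition $g$ together with the two maps $j \circ \mathrm{in}_0 \colon A_0 \to \| A_0 + A_1 \|$ and $j \circ \mathrm{in}_1 \colon A_1 \to \| A_0 + A_1 \|$. These lie over $B$ since $g \circ j \circ \mathrm{in}_k = m_k$, and $m_0, m_1$ are cofibrations, so the lemma delivers a map $\phi \colon A_0 \cup A_1 \to \| A_0 + A_1 \|$ with $g \circ \phi = m$ — one of the two commuting triangles, essentially for free.

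For the second map I would first observe that $m \colon A_0 \cup A_1 \to B$ is itself an hproposition: it is a subobject of the discrete object $B$, hence discrete by Proposition~\ref{prop:discreteproperties}, so it is a fibration by the same proposition, and since source and target are discrete and $m$ is monic the comparison map $P_B(A_0 \cup A_1) \to (A_0 \cup A_1) \times_B (A_0 \cup A_1)$ is an isomorphism (all three objects being canonically identified with $A_0 \cup A_1$, using that the relevant $r$-maps are isomorphisms), in particular a trivial fibration. Then the square with sides $j$, $[\iota_0,\iota_1] \colon A_0 + A_1 \to A_0 \cup A_1$, $g$ and $m$ commutes — both composites equal $[m_0,m_1]$ — so a lift of $j$ against the hproposition $m$ gives $\psi \colon \| A_0 + A_1 \| \to A_0 \cup A_1$ with $m \circ \psi = g$, the other triangle. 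I do not expect a genuine obstacle: the only point needing a moment's thought is this hproposition check for $m$, but it is forced by $m$ being a monomorphism between discrete objects, so the whole argument is just an assembly of Lemma~\ref{lem:mergeterms}, the universal property of propositional truncation, and elementary facts about discrete objects.
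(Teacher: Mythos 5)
Your proposal is correct and follows essentially the same route as the paper's own proof: one direction is obtained from Lemma~\ref{lem:mergeterms} applied to the hproposition $g$ and the coproduct-inclusion-composed-with-unit maps $j \circ \mathrm{in}_k$, and the other direction is a lift of $j$ against $m$, having first checked that $m$ is an hproposition (the paper phrases this step as ``monic fibration between discrete objects implies hproposition'' while you unpack the $P_B(A_0 \cup A_1) \cong (A_0 \cup A_1) \times_B (A_0 \cup A_1)$ identification explicitly, but it is the same observation).
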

\begin{proof}
  We first construct the map $A_0 \cup A_1 \to \| A_0 + A_1 \|$.
  
  We clearly have maps $t_i$ for $i = 0,1$ in the following commutative
  diagram.
  \begin{equation*}
    \xymatrix{ & \| A_0 + A_1 \| \ar[d] \\
      A_i \ar[ur]^{t_i} \ar[r]_{m_i} & B
    }
  \end{equation*}
  Since $\| A_0 + A_1 \|$ is an hproposition and each $m_i$ is a
  cofibration we can apply lemma \ref{lem:mergeterms} to get the
  required map $A_0 \cup A_1 \to \| A_0 + A_1 \|$.

  We next construct the map $\| A_0 + A_1 \| \to A_0 \cup A_1$.
  Since
  $m \colon A_0 \cup A_1 \to B$ is a map between discrete objects, it
  is a fibration, albeit not necessarily small. Since it is a
  monomorphism and a fibration, it is an hproposition.
  Hence we can obtain
  the required map as a filler in the following lifting problem.
  \begin{equation*}
    \xymatrix{ A_0 + A_1 \ar[r] \ar[d]_{|-|} & A_0 \cup A_1 \ar[d]^m \\
      \| A_0 + A_1 \| \ar[r] \ar@{.>}[ur] & B}
  \end{equation*}
\end{proof}

We will now see the first key lemma of this section. We will later give
a more general lemma, but this one is simpler and therefore easier to
understand, and is already useful in presheaf assemblies where all
objects are pointwise $\neg \neg$-separated.
\begin{lemma}
  \label{lem:vsephs}
  Let $\catc$ be a category of internal presheaves in a locally
  cartesian closed category with finite colimits and disjoint
  coproducts. Suppose we are given a univalent universe
  $\elu \colon \tilde{U} \to U$ and two maps $m_0 \colon A_0 \to B$
  and $m_1 \colon A_1 \to B$ satisfying the following conditions.
  \begin{enumerate}
  \item $U$ is pointwise $\neg \neg$-separated.
  \item $U$ is closed under propositional truncation.
  \item Both $m_0$ and $m_1$ are cofibrations.
  \item $B$ is discrete.
  \item $A_0$, $A_1$ and $B$ are $U$-small (but note that $A_0
    \cup A_1$ does \emph{not} need to be $U$-small).
  \end{enumerate}
  Write $m$ for the union $A_0 \cup A_1 \rightarrow B$.

  Then $m$ is pointwise $\neg \neg$-stable.
\end{lemma}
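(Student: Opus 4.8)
The plan is to exploit the univalence of $U$ to turn the "merge two partial terms" construction of Lemma \ref{lem:unionhunionequiv} into an honest statement about equality of classifying maps, and then feed pointwise $\neg\neg$-separatedness of $U$ into that. By Lemma \ref{lem:unionhunionequiv} we get mutually inverse-up-to-homotopy maps $A_0 \cup A_1 \rightleftarrows \|A_0 + A_1\|$ over $B$; since both are hpropositions over $B$ (the first because $m$ is a mono and a fibration, the second by construction), these maps exhibit $A_0 \cup A_1 \to B$ and $\|A_0 + A_1\| \to B$ as equivalent fibrations over $B$. The key point is that $\|A_0 + A_1\| \to B$ is $U$-small (this is where closure under propositional truncation and proposition \ref{prop:coftofib} are used), so it has a classifying map $g \colon B \to U$. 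I will argue that, because $A_0 \cup A_1 \to B$ is equivalent over $B$ to this $U$-small fibration and $U$ is univalent, $A_0 \cup A_1 \to B$ is \emph{also} $U$-small, classified by the same $g$ up to homotopy --- more precisely, one can transport the $U$-smallness along the equivalence using that the map $\Sigma_{X:U}\Sigma_{Y:U}\hequiv(X,Y)\to U$ is a trivial fibration.

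Granting that, $A_0\cup A_1 \to B$ is a pullback of $\elu \colon \tilde U \to U$ along $g$. Now work in the internal language of $\cat{E}$, pointwise over an object $c$ of $\smcat C$. For $b$ in $B_c$, the fibre $(A_0 \cup A_1)_c$ over $b$ is the fibre of $\elu$ over $g_c(b)$, i.e.\ it is $\elu(g_c(b))$, and being inhabited is a property of the point $g_c(b) \in U_c$. Being "merely in the union" --- i.e.\ $\neg\neg$ inhabited --- is then $\neg\neg(\text{``}\elu(g_c(b))\text{ is inhabited''})$, a $\neg\neg$-closed condition on $g_c(b)$. The inhabitedness of $\elu(x)$ for $x \in U_c$ can be detected by comparing $x$ with the unit-type element $u(c)$: indeed $m$ is pointwise $\neg\neg$-dense into... no --- rather, I will use that $\|A_0+A_1\| \to B$ is an hproposition, so its fibres are "at most one point", hence as $U$-small types they are either $\cong 1$ or empty, and "being inhabited" corresponds exactly to $g_c(b) = u(c)$ after possibly adjusting the classifying map so that inhabited fibres are classified by the unit type. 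Since $U$ is pointwise $\neg\neg$-separated, $\neg\neg(g_c(b) = u(c)) \to g_c(b) = u(c)$, and hence $\neg\neg(\text{``}b \in A_0\cup A_1\text{''}) \to (\text{``}b\in A_0\cup A_1\text{''})$ pointwise, which is precisely pointwise $\neg\neg$-stability of $m$.

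The main obstacle I expect is making the passage "equivalent over $B$ to a $U$-small fibration $\Rightarrow$ $U$-small" precise and constructively valid: this is essentially a univalence-for-fibrations argument, and one has to be careful that the equivalence supplied by Lemma \ref{lem:unionhunionequiv} genuinely lands in $\hequiv$ rather than merely being a homotopy equivalence of the total spaces, and that $U$ being univalent (a trivial-fibration condition, equivalently contractibility of the $\hequiv$-bundle by Proposition \ref{prop:trivfibshe}) is strong enough to produce the classifying map for $A_0 \cup A_1$ even though $A_0 \cup A_1$ itself is not assumed small. A secondary subtlety is arranging the classifying map $g$ so that inhabited fibres of the hproposition are sent exactly to $u(c)$; this should follow from the fact that any $U$-small hproposition with a section is a trivial fibration (Proposition \ref{prop:hpropwsectistrivfib}), so its classifying map factors through $u$ on the locus where a section exists, but stitching this together pointwise and $\neg\neg$-locally is the delicate part of the argument.
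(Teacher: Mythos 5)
There are two genuine gaps, and they are precisely the points you flag yourself as ``delicate.'' First, being fibrewise equivalent over $B$ to a $U$-small fibration does \emph{not} make $A_0 \cup A_1 \to B$ a strict pullback of $\elu$: $U$-smallness is a strict classification, not an up-to-equivalence condition, and what you extract from the trivial fibration $\Sigma_{X:U}\Sigma_{Y:U}\hequiv(X,Y)\to U$ is only that $A_0\cup A_1$ is \emph{essentially} small --- which is exactly what you already knew from Lemma~\ref{lem:unionhunionequiv}. So the step ``$A_0\cup A_1 \to B$ is a pullback of $\elu$ along $g$'' is false, and the hypothesis deliberately does not assume $A_0 \cup A_1$ is small. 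Second, even for the honest small fibration $C := \|A_0 + A_1\| \to B$ with classifying map $\gamma \colon B \to U$, it is not true that $\gamma_c(b) = u(c)$ on the locus where the fibre is inhabited: inhabited fibres of an hproposition are contractible, but a contractible small type need not have code literally equal to the unit's code in $U_c$. ``Trivial fibration'' does not imply ``classifying map factors through $u$,'' so the $\neg\neg$-separation argument applied to $\gamma$ does not close.

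The paper's proof routes around both problems by using that $m$ itself is a cofibration (unions of cofibrations) to form a lifting problem against the trivial fibration $\pi_X \colon \Sigma_{X:U}\Sigma_{Y:U}\hequiv(X,Y)\to U$, with bottom $\gamma$ and top given by $X$-component $\gamma\circ m$, $Y$-component \emph{literally} $u\circ{!}_{A_0\cup A_1}$, and the equivalence coming from Lemma~\ref{lem:unionhunionequiv} together with Propositions~\ref{prop:hpropwsectistrivfib} and~\ref{prop:trivfibshe}. The diagonal filler $j$ yields a map $y := \pi_Y\circ j \colon B \to U$ that equals $u$ on the nose over $A_0\cup A_1$ (upper triangle), and then pointwise $\neg\neg$-separation of $U$ promotes $\neg\neg(y_c(b)=u(c))$ to $y_c(b)=u(c)$, after which pulling back the equivalence along $\bar b \colon \yoneda c \to B$ yields a section of $\bar b^\ast C$, and finally the backward map of Lemma~\ref{lem:unionhunionequiv} lands you in $A_0\cup A_1$. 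So the role of univalence is not to make $A_0\cup A_1$ small, nor to normalize $\gamma$, but to manufacture an auxiliary map $y$ that is exactly $u$ where it should be; your proposal is missing this lifting step, and without it the $\neg\neg$-separation hypothesis has nothing to bite on.
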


\begin{proof}
  We first note that since $B$ is discrete, each $m_i$ is a small
  fibration by proposition \ref{prop:coftofib}.

  We avoided assuming that $A_0 \cup A_1$ is small. We note however,
  that the ``homotopy union'' of $A_0$ and $A_1$ is necessarily small,
  since $U$ is closed under coproducts and propositional truncation.
  Explicitly, we define another small fibration
  $f \colon C \to B$ using the definition below.
  \begin{equation*}
    C(b) := \| A_0(b) + A_1(b) \|
  \end{equation*}
  Let $\gamma \colon B \to U$ be a classifying map for $f$.

  Since cofibrations are closed under unions, $m$ is a
  cofibration. Since $U$ is univalent the map
  $\pi_X \colon \Sigma_{X : U} \Sigma_{Y : U} \hequiv(X, Y) \to U$ is
  a trivial fibration. We will aim to define a lifting problem of $m$
  against $\pi_X$, as illustrated below.
  \begin{equation}
    \label{eq:vsephslp}
    \begin{gathered}
      \xymatrix{ A_0 \cup A_1 \ar[r]^(0.35)\alpha \ar[d]_m & \Sigma_{X : U}
        \Sigma_{Y : U} \hequiv(X, Y) \ar[d]^{\pi_X} \\
        B \ar[r]_\gamma \ar@{.>}[ur]|j & U}      
    \end{gathered}
  \end{equation}
  We take the bottom map $B \to U$ to be $\gamma$,
  which we recall was a classifying map for the small fibration
  $f \colon C \to B$.

  The next step is to construct the top map $\alpha$ of the lifting
  problem, which needs to map from $A_0 \cup A_1$ to
  $\Sigma_{X : U} \Sigma_{Y : U} \hequiv(X, Y)$. Note that this
  amounts to constructing maps $\xi, \zeta \colon A_0 \cup A_1 \to U$
  together with an equivalence $e$ over $A_0 \cup A_1$ between
  $\xi^\ast(\tilde{U})$ and $\zeta^\ast(\tilde{U})$. First note that
  in order for the lifting problem to be a commutative square, we are
  forced to take $\xi$ to be $\gamma \circ m$.

  The key to the proof is that we define $\zeta$ to be
  $u \circ !_{A_0 \cup A_1}$. Informally, the $Y$ component of the map
  from $A_0 \cup A_1$ to $\Sigma_{X : U} \Sigma_{Y : U} \hequiv(X, Y)$
  is constantly equal to the unit type.

  It still remains to construct the equivalence $e$. Since the
  definition of equivalence does not require defining small types we
  no longer need to work ``inside $U$.'' Therefore, as we stated
  above, it suffices to construct an equivalence in $\catc$ over
  $A_0 \cup A_1$ between $m^\ast(f)$ and the identity on
  $A_0 \cup A_1$. Note that it suffices to show that the map
  $m^\ast(f) \colon m^\ast(C) \to A_0 \cup A_1$ is a
  strong homotopy equivalence. Hence
  by proposition \ref{prop:trivfibshe} it suffices to show it is a
  trivial fibration. Recall that we constructed $f \colon C \to B$
  by interpreting
  the type $\| A_0(b) + A_1(b) \|$, which is an hproposition. Hence the
  pullback $m^\ast(f)$ is also an hproposition. Therefore to show it
  is a trivial fibration, it suffices by proposition
  \ref{prop:hpropwsectistrivfib} to show it has a section, which
  easily follows from lemma \ref{lem:unionhunionequiv}.
  
  So we do have a well defined map
  $\alpha \colon A_0 \cup A_1 \to \Sigma_{X : U} \Sigma_{Y : U}
  \hequiv(X, Y)$ such that $\pi_X \circ \alpha = \gamma$ and
  $\pi_Y \circ \alpha = u \circ !_{A_0 \cup A_1}$. Let $j$ be a
  diagonal filler as in \eqref{eq:vsephslp}.

  We now use all of this to show $m$ is locally $\neg \neg$-stable. We
  recall that we are working in a category of presheaves over a
  category $\cat{E}$ and switch to the internal logic of
  $\cat{E}$. Let $c$ be an object of $\smcat{C}$ and let $b \in
  B(c)$. Suppose that $m_c^{-1}(b)$ is not empty. We need to show that
  it is inhabited.

  Note that if $m_c^{-1}(b)$ was inhabited, then the upper triangle in
  the lifting diagram would imply that $\pi_Y(j_c(b)) = u(c)$. We can
  therefore deduce that $\pi_Y(j_c(b))$ is not not equal to
  $u(c)$. However, we can now apply the fact that $U$ is pointwise
  $\neg \neg$-separated to show that in fact $\pi_Y(j_c(b))$ is equal
  to $u(c)$. Furthermore, for all $\sigma \colon c' \to c$ in
  $\smcat{C}$, we have that $m_{c'}^{-1}(B(\sigma)(b))$ is not empty,
  and so we similarly can show that $\pi_Y(j_{c'}(B(\sigma)(b)))$ is
  equal to $u_{c'}(B(\sigma)(b))$. Therefore, if
  $\bar{b} \colon \yoneda c \to B$ is the map corresponding to $b$
  under Yoneda, then the composition $\pi_Y \circ j \circ \bar{b}$
  factors through the unit type $u \colon 1 \to U$. Hence the pullback
  of $Y$ along $\bar{b}$ is an isomorphism. Furthermore, we can
  pullback the equivalence to obtain an equivalence between
  $\bar{b}^\ast(Y)$ and $\bar{b}^\ast\| A_0 + A_1 \|$. We deduce that
  $\bar{b}^\ast\| A_0 + A_1 \|$ has a section. This gives us an
  element of $\| A_0 + A_1 \|(c)$ in the fibre of $b$.

  Finally, applying the map $\| A_0 + A_1 \| \to A_0 \cup A_1$ from
  lemma \ref{lem:unionhunionequiv} gives us an element of
  $A_0 \cup A_1 (c)$ in the fibre of $b$ as required.
\end{proof}

We will now aim towards another, more general result, which allows us
to replace the assumption that $U$ is $\neg \neg$-separated with a
much weaker (but more complicated) requirement. We will further assume
that the universe contains ``contractibility representations'' in a
sense that we will define below.

\begin{definition}
  Let $V$ be a universe in $\cat{E}$. A \emph{weakly $\neg
    \neg$-stable unit} is a unit $u \colon 1 \to V$ such that the
  following holds in the internal logic of $\cat{E}$.
  \begin{enumerate}
  \item $\elv(u)$ has exactly one element.
  \item For all $x \in V$, if $\neg \neg (x = u)$, then $\elv(x)$ has at
    most one element.
  \end{enumerate}
\end{definition}

A key idea is that although the definition of weakly
$\neg \neg$-stable unit still sounds a little strong when working in
intuitionistic logic, it does hold in constructive set theory, using
the axiom of extensionality. An earlier version of this idea is
mentioned by Orton and Pitts in \cite[Remark
8.7]{pittsortoncubtopos}.
\begin{lemma}
  \label{lem:czfwkstbunit}
  Work over $\czf + \inacc$, take $\cat{E}$ to be the category of
  sets, and $V$ to be an inaccessible set. Then $V$ has a weakly $\neg
  \neg$-stable unit.
\end{lemma}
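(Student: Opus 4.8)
The plan is to exhibit the canonical singleton as the weakly $\neg\neg$-stable unit. Since an inaccessible set is transitive and models $\czf$, it contains $\omega$ and hence also the ordinal $1 = \{\emptyset\}$; I would take $u \colon 1 \to V$ to be this element (inaccessibility also guarantees that $V$ is a universe in the sense fixed earlier). The fibre $\elv(u)$ is then literally the set $\{\emptyset\}$, which has exactly one element, so condition (1) of the definition of a weakly $\neg\neg$-stable unit holds on the nose, and this also makes $u$ a unit in the earlier sense.

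The real content is condition (2). Suppose $x \in V$ and $\neg\neg(x = \{\emptyset\})$; I must show $\elv(x) = x$ has at most one element. The first step is to note that every $a \in x$ satisfies $\neg\neg(a = \emptyset)$: indeed, if $\neg(a = \emptyset)$, then $x = \{\emptyset\}$ is impossible, because $a \in x$ would force $a = \emptyset$, and this contradicts $\neg\neg(x = \{\emptyset\})$. The second, and crucial, step is that by Extensionality the statement $a = \emptyset$ is equivalent to $\forall z\,(z \notin a)$, a universally quantified negation, and such statements are $\neg\neg$-stable in intuitionistic logic. Combining the two steps, $a = \emptyset$ for every $a \in x$; hence any two elements of $x$ coincide and $x$ is a subsingleton, which is exactly condition (2).

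The argument is carried out entirely inside $\czf$ and uses only Extensionality together with the closure of an inaccessible set under the empty set and pairing, so I do not anticipate any real difficulty. The one point worth emphasising — and the reason the lemma is true at all — is that asking for $\neg\neg(x = \{\emptyset\}) \to x = \{\emptyset\}$ would be non-constructive, whereas the definition of a weakly $\neg\neg$-stable unit only requires the weaker conclusion that $\elv(x)$ be a subsingleton, and that does follow, because ``$a = \emptyset$'' is a $\neg\neg$-stable property of $a$.
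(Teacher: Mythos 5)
Your proof is correct and takes essentially the same approach as the paper: choose $\{\emptyset\}$ as the unit, and use Extensionality to argue that any $x$ not-not equal to it is a subsingleton. You isolate the $\neg\neg$-stability of the formula $a=\emptyset$ as an explicit step, whereas the paper folds it in by directly deriving a contradiction from an assumed $w \in y \in x$, but the two arguments are the same in substance.
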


\begin{proof}
  We take $z$ to be the small set $\{\emptyset\}$ (which is
  the usual implementation of the terminal object in $\set$
  anyway). Suppose that $x$ is a (small) set and that the double
  negation of $x = z$ holds. Now let $y$ be any element of
  $x$. Suppose that $y$ contains an element $w$. Then $y \neq
  \emptyset$, and so $y \notin z$. Hence $x \neq z$ by extensionality,
  contradicting the double negation of $x = z$. But we have now shown
  that every element $y$ of $x$ is empty, and so $x$ is a subset of
  $\{\emptyset\}$. We can now deduce that $x$ has at most one element,
  as required.
\end{proof}

We note furthermore that there is another example of a weakly
$\neg \neg$-stable unit in the effective topos, or more generally any
realizability topos. In \cite[Section
3]{streicherunivtop}, Streicher observed that one can construct
universes in realizability toposes using ideas developed by Awodey,
Butz, Simpson and Streicher in \cite{abss}, as follows.

Assuming the existence of an inaccessible ordinal $\kappa$ one can
obtain a set sized version of McCarty's model of $\mathbf{IZF}$ from
\cite{mccarty} by truncating the definition of $V(\mathcal{A})$ at
level $\kappa$. Streicher then makes this into an object
$\mc(\mathcal{A})$ of the realizability topos using the same
definition of equality as used in set theory to obtain a universe.

\begin{lemma}
  \label{lem:mccartyhasnegnegunit}
  For any pca $\mathcal{A}$, the universe $\mc(\mathcal{A})$ defined
  above possesses a weakly $\neg \neg$-stable unit.
\end{lemma}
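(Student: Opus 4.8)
The plan is to internalise the proof of Lemma~\ref{lem:czfwkstbunit} into the realizability topos. Take $\cat{E} = \rt(\mathcal{A})$, and use that, by the construction of Awodey, Butz, Simpson and Streicher \cite{abss} as deployed by Streicher \cite{streicherunivtop}, the object $\mc(\mathcal{A})$ is an internal model of (a set-sized fragment of) $\izf$; in particular it validates extensionality, together with the existence and basic properties of $\emptyset$ and $\{\emptyset\}$, in the internal logic of $\cat{E}$. Let $u \colon 1 \to \mc(\mathcal{A})$ be the element corresponding to the $\izf$-set $\{\emptyset\}$ --- in McCarty's presentation, the canonical name whose single member-slot is the canonical empty-set name $\widehat{\emptyset}$, equipped with a trivial realizer. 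The dictionary to keep in mind is that for an element $x$ of $\mc(\mathcal{A})$ the fibre $\elv(x)$ may be identified, up to isomorphism in $\cat{E}$, with the object of members of the name $x$ carrying the equality induced from $\mc(\mathcal{A})$; so the assertion that $\elv(x)$ has exactly (resp.\ at most) one element is precisely the assertion that, internally, $x$ has exactly (resp.\ at most) one member up to $=_{\mc}$.

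With this dictionary condition~(1) is immediate: $\widehat{\emptyset}$ is a member of $u$, so $\elv(u)$ is inhabited, and since the members of $\{\emptyset\}$ are precisely the sets $=_{\mc}$-equal to $\emptyset$, any two members of $u$ are $=_{\mc}$-equal, so $\elv(u)$ is a singleton. For condition~(2) I would transcribe the argument of Lemma~\ref{lem:czfwkstbunit} into the internal logic of $\cat{E}$. Suppose $x$ is an element of $\mc(\mathcal{A})$ with $\neg\neg(x =_{\mc} u)$, and let $y$ be an element of $\elv(x)$, i.e.\ a member of $x$. If $y$ had a member $w$, then $y$ would not be $=_{\mc}$-equal to $\widehat{\emptyset}$ (as $\widehat{\emptyset}$ has no member), hence $y$ would not be a member of $u$, hence $x$ would not be $=_{\mc}$-equal to $u$, contradicting $\neg\neg(x =_{\mc} u)$. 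So $y$ has no member, and by extensionality in $\mc(\mathcal{A})$ we get $y =_{\mc} \widehat{\emptyset}$. Thus every element of $\elv(x)$ is $=_{\mc}$-equal to $\widehat{\emptyset}$, so any two such elements are equal, i.e.\ $\elv(x)$ has at most one element. This gives both clauses, hence $u$ is a weakly $\neg\neg$-stable unit.

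The one genuinely non-formal ingredient is the dictionary invoked in the first paragraph: that $\mc(\mathcal{A})$ validates internally in $\rt(\mathcal{A})$ the fragment of $\izf$ used here, and that the universal family $\elv$ decodes a name to its object of members with the inherited equality. Both are part of what \cite{abss} and \cite{streicherunivtop} establish, and I would cite them rather than reprove them; the only point worth spelling out carefully is the precise identification of $\elv(x)$ with the extension of the name $x$, since the statement is phrased via $\elv$ whereas the argument runs most naturally through internal membership. Everything else is the reasoning of Lemma~\ref{lem:czfwkstbunit} --- itself a form of Orton and Pitts' \cite[Remark~8.7]{pittsortoncubtopos} --- now read inside the realizability topos, which is why I expect the main obstacle here to be bookkeeping rather than mathematical depth.
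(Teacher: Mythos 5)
Your proposal is correct and follows essentially the same route as the paper: internalise the extensionality argument of Lemma~\ref{lem:czfwkstbunit} into the McCarty model and use the coincidence of the model's equality with that of $\rt(\mathcal{A})$ to transfer the conclusion. The paper's proof is a two-sentence appeal to exactly this transfer; you have simply spelled out the dictionary between $\elv(x)$ and the extension of the name $x$, and repeated the extensionality argument explicitly, which is helpful but not a different method.
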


\begin{proof}
  Since $V_\kappa(\mathcal{A})$ is a model of set theory, we can carry
  out exactly the same argument as in lemma \ref{lem:czfwkstbunit}
  internally in the model. Since the definition of equality in the
  topos is the same as in the set theoretic model, it follows that
  we do get a weakly $\neg \neg$-stable unit in the topos.
\end{proof}

As before, we will exploit the fact that we are working in a category
of presheaves.
\begin{definition}
  Let $U$ be a universe in $\catc$. We say a \emph{pointwise weakly
    $\neg \neg$-stable unit} is a map $u \colon 1 \to U$ with the
  following property. In the internal logic of $\cat{E}$ we have that
  for every $c \in \smcat{C}$ and every $x \in U(c)$, if $\neg \neg x
  = u(c)$ then $\elv(c, x)$ has at most one element.
\end{definition}

\begin{definition}
  If $\cat{E}$ has a universe $V$, recall that we can define the
  Hofmann-Streicher universe $V_\smcat{C}$ in $\catc$ as
  follows. Given an object $c$ of $\smcat{C}$, we take
  $(V_\smcat{C})_c$ to be the collection of ``small presheaves'' on
  the category $\int_c \yoneda c$, as defined by Hofmann and Streicher
  in \cite{hofmannstreicherliftuniv}. The action of morphisms is
  defined via composition. Defining this internally in $\cat{E}$ takes
  a little care, but this has been done by Uemura in \cite[Section
  4.1]{uemuracubasm}.
\end{definition}

\begin{definition}
  Let $V$ be a universe in $\cat{E}$. A \emph{homotopical
    Hofmann-Streicher universe on $V$} is a homotopical universe $U$
  together with a map $i \colon U \to V_\smcat{C}$ with the following
  property. Let $\chi \colon Y \to V_\smcat{C}$ be any map. Then
  $\chi$ factors through $i$ if and only if the pullback of $\elv$
  along $\chi$ is a fibration.
\end{definition}

\begin{remark}
  In \cite{uemuracubasm}, Uemura used techniques developed by Licata,
  Orton, Pitts and Spitters in \cite{lops} to extend a
  Hofmann-Streicher universe in cubical assemblies to a homotopical
  Hofmann-Streicher universe.
\end{remark}

\begin{lemma}
  \label{lem:negnegunittopointwise}
  Suppose that $V$ has a weakly $\neg \neg$-stable unit and $U$ is a
  homotopical Hofmann-Streicher universe on $V$. Then $U$ has a
  pointwise weakly $\neg \neg$-stable unit.
\end{lemma}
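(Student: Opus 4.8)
The plan is to transport the weakly $\neg\neg$-stable unit from $V$ in $\cat{E}$ to the Hofmann-Streicher universe $V_\smcat{C}$ in $\catc$, check that the unit $u \colon 1 \to U$ of $U$ (as fixed in the paper) agrees under $i \colon U \to V_\smcat{C}$ with this transported unit, and then unwind the pointwise condition to a statement in the internal logic of $\cat{E}$ that we can verify object-by-object of $\smcat{C}$. First I would recall the explicit description of $V_\smcat{C}$: for an object $c$ of $\smcat{C}$, $(V_\smcat{C})_c$ is the collection of small presheaves on $\int_c \yoneda c$, and the fibre $\elv_\smcat{C}(c, F)$ over such an $F$ is (the collection of global sections of, or the value at the terminal object $\mathrm{id}_c$ of) that small presheaf $F$. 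The unit of $V_\smcat{C}$ should be the constant-at-$u$ small presheaf: at each stage it assigns the fibre $\elv(u)$, which has exactly one element by hypothesis, so this is indeed a unit in the sense of the paper, and by the universal property of $i$ (the unit type $1 \to U$ is $U$-small, hence a fibration, hence factors through $i$) it coincides with $i \circ u$.

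The main work is then the following. Fix, in the internal logic of $\cat{E}$, an object $c$ of $\smcat{C}$ and an element $x \in U(c)$; write $F := i_c(x) \in (V_\smcat{C})_c$ for the associated small presheaf on $\int_c \yoneda c$. Assume $\neg\neg\,(x = u(c))$; since $i$ is a monomorphism (it is a universe inclusion picking out exactly the fibrant-classifying maps, and is monic because $U$-smallness is a property of a map, not extra structure — more carefully, $i$ is monic because a map into $V_\smcat{C}$ has at most one factorisation through the universe), this gives $\neg\neg\,(F = F_0)$ where $F_0$ is the constant-at-$u$ presheaf. I must deduce that $\elv_\smcat{C}(c, x)$, i.e. the set of elements of $F$ at the top stage, has at most one element. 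The point is that $F$ is assembled out of the fibres $\elv(F(\sigma))$ for the various objects $\sigma$ of $\int_c \yoneda c$, and $\neg\neg\,(F = F_0)$ entails $\neg\neg\,(F(\sigma) = u)$ for each such $\sigma$ — in particular for the terminal object. By the second clause of "weakly $\neg\neg$-stable unit" applied in $\cat{E}$, each such fibre $\elv(F(\sigma))$ then has at most one element; an $n$-element presheaf, all of whose stage-sets have at most one element, has at most one global section, so in particular its value at the top stage has at most one element. This is exactly the pointwise weakly $\neg\neg$-stable unit condition for $U$ with the unit $u$.

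The step I expect to be the main obstacle is the bookkeeping around what $\elv_\smcat{C}(c, x)$ "is" — i.e. pinning down, from Uemura's internal construction in \cite[Section 4.1]{uemuracubasm}, the precise relationship between the fibre of $\elv_\smcat{C}$ at $(c, F)$ and the values $F(\sigma)$ of the small presheaf $F$, and checking that "$\neg\neg\,(F = F_0)$ in $\cat{E}$'s internal logic" really does propagate to "$\neg\neg\,(F(\sigma) = u)$ for each $\sigma$" and thence to the at-most-one-element conclusion, all without invoking any choice or classical logic. Once the description of $V_\smcat{C}$ and its fibres is fixed, everything else is a direct application of the hypothesis on $V$ together with the observation that "has at most one element" is inherited by subobjects and by any set that injects into a product of such sets. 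I would also double-check the harmless-looking claim that $i \circ u$ is the transported unit; this is forced by the universal property of $i$ together with the fact that $\elv(u)$ has exactly one element, so the constant presheaf $F_0$ is genuinely $\elv$-small with singleton fibre and hence is $i_c$ of the paper's unit $u(c)$.
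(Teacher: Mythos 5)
Your proposal takes essentially the same route as the paper: transport $u$ to the constant-at-$u$ presheaf $u'$ in $V_\smcat{C}$, note that $u'$ factors through $i$ (the identity on $1$ being a fibration), then from $\neg\neg(x = u''(c))$ push forward under $i$ to $\neg\neg\bigl(i(x)(c, 1_c) = u\bigr)$, apply the weak $\neg\neg$-stability of $u$ in $V$, and identify $\elu(x)(c)$ with $\elv\bigl(i(x)(c, 1_c)\bigr)$. The only inessential wrinkles are your detour through monicity of $i$ (you only apply $i$ in the forward direction, so functoriality suffices) and your framing via global sections rather than the value at the terminal object $(c, 1_c)$, which coincide by Yoneda; the paper takes the latter, shorter route.
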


\begin{proof}
  Suppose that $u \colon 1 \to V$ is a weakly $\neg \neg$-stable
  unit in $V$. First note that $V_\smcat{C}$ has a pointwise weakly
  $\neg \neg$-stable unit $u' \colon 1 \to V_\smcat{C}$ defined as
  follows. In the internal logic of $\smcat{E}$ we need to define a
  map $1 \to V_\smcat{C}(c)$ for each object $c$ of $\smcat{C}$. An
  element of $V_\smcat{C}(c)$ consists first of a map $\Sigma_{d \in
    \smcat{C}} \hom(d, c) \to V$. We define each such map to be constantly
  equal to $u$. We take the action on morphisms to be the identity
  everywhere.
  
  Next, note that the identity on $1$ is an isomorphism and so a
  fibration, and so $u'$ does factor through
  $i \colon U \to V_\smcat{C}$ to give a map $u'' \colon 1 \to U$.

  We finally need to check that $u''$ is pointwise weakly $\neg
  \neg$-stable. That is, we need to show, in the internal logic of
  $\cat{E}$, that for all $c \in \smcat{C}$ and every $x \in U(c)$, if
  $\neg \neg x = u''(c)$ then $\elu(x)$ has at most one element. First,
  note that if $\neg \neg x = u''(c)$, then also $\neg \neg i(x) =
  u'(c)$. It follows that for every $d \in \smcat{C}$, and every
  $f \colon d \to c$, we have $\neg \neg i(x)(d, f) = u$. In
  particular we have $\neg \neg i(x)(c, 1_c) = u$. Hence $\elv(i(x)(c,
  1_c))$ has at most one element. But this is precisely the definition
  of $\elv(i(x))(c)$.
\end{proof}

\begin{definition}
  Let $f \colon X \to Y$ be a small hproposition with respect to some
  universe $U$. We say a \emph{contractibility representation} for
  $f$ is a small fibration $g \colon Z \to Y$ that has a section $z_0
  \colon Y \to Z$, and
  such that there is a homotopy equivalence $e$ in the following
  diagram, where $\iscontr_{z_0} (Z)$ is the result of interpreting
  $\Pi_{z : Z} \, z = z_0$ using path types and dependent products in the
  usual way.
  \begin{equation*}
    \xymatrix{ X \ar[rr]^e \ar[dr]_f & & \iscontr_{z_0}(Z) \ar[dl] \\
      & Y & }
  \end{equation*}
\end{definition}

\begin{remark}
  One can construct contractibility representations working
  internally in homotopy type theory under reasonable conditions about
  the existence of higher inductive types. For example, given an
  hproposition $X$, one can show using univalence that if the
  suspension $\operatorname{Susp}(X)$ exists then it is a
  contractibility representation of $X$. Alternatively one can also
  use set quotients together with univalence.
\end{remark}

\begin{lemma}
  \label{lem:negnegunittocof}
  Suppose that $\catc$ is a category of presheaves over an
  internal category $\smcat{C}$ in $\cat{E}$ and that all of the
  following hold.
  \begin{enumerate}
  \item $\catc$ has a univalent universe $U$.
  \item $U$ has a pointwise weakly $\neg \neg$-stable unit.
  \item $U$ is closed under propositional truncation.
  \item $U$ has contractibility representations for all small
    hpropositions.
  \end{enumerate}

  Let $A_0$, $A_1$ and $B$ be small and discrete objects of
  $\catc$. If $m_i \colon A_i \to B$ are cofibrations for $i = 0, 1$
  then the union $m \colon A_0 \cup A_1 \to B$ is pointwise
  $\neg \neg$-stable.
\end{lemma}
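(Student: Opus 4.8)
The plan is to follow the template of the proof of Lemma~\ref{lem:vsephs}: run the univalence axiom to manufacture a diagonal filler $j$ over $B$, and then read off pointwise $\neg\neg$-stability from the behaviour of the $Y$-component of $j$ at a point where the fibre of $m$ is nonempty. The one \emph{essential} change is that, since we can no longer turn a statement of the form $\neg\neg(x = u(c))$ into $x = u(c)$, the univalence argument must be run on a contractibility representation $Z$ of the homotopy union $C$ rather than on $C$ itself: the distinguished point $z_0$ of $Z$ is exactly what is needed to upgrade a monomorphism (all that weak stability of the unit will give us) to an isomorphism.

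In detail: as in Lemma~\ref{lem:vsephs}, Proposition~\ref{prop:coftofib} shows each $m_i$ is a small fibration and hproposition (using that $B$ is discrete), and closure of $U$ under coproducts and propositional truncation produces a small hproposition $f\colon C\to B$ with $C(b):=\|A_0(b)+A_1(b)\|$. By hypothesis~(4), $f$ has a contractibility representation $g\colon Z\to B$ with section $z_0\colon B\to Z$ and a homotopy equivalence $e\colon C\to\iscontr_{z_0}(Z)$ over $B$. I would first record that, after pulling back along $m\colon A_0\cup A_1\to B$, the map $Z$ becomes a trivial fibration: Lemma~\ref{lem:unionhunionequiv} gives a section of $m^\ast C\to A_0\cup A_1$, so by Proposition~\ref{prop:hpropwsectistrivfib} this is a trivial fibration; transporting the section along $e$, and noting that $\iscontr_{z_0}(Z)\to B$ is an hproposition (being fibrewise equivalent to $C\to B$), shows $m^\ast\iscontr_{z_0}(Z)\to A_0\cup A_1$ is a trivial fibration too, whence every fibre of $m^\ast Z\to A_0\cup A_1$ is contractible and this map, being a fibration between fibrant objects, is a trivial fibration, hence a strong homotopy equivalence to $A_0\cup A_1$.

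Next I would set up the univalence lifting problem of the cofibration $m$ against the trivial fibration $\pi_X\colon\Sigma_{X:U}\Sigma_{Y:U}\hequiv(X,Y)\to U$: the bottom map is a classifying map $\zeta\colon B\to U$ for $g\colon Z\to B$; the top map has $X$-component $\zeta\circ m$ and $Y$-component $u\circ{!}_{A_0\cup A_1}$, where $u$ is the given pointwise weakly $\neg\neg$-stable unit, and the required equivalence over $A_0\cup A_1$ between $m^\ast Z$ and the identity is supplied by the strong homotopy equivalence just obtained together with the fact that $u\circ{!}$ classifies the identity map. A diagonal filler yields $j\colon B\to\Sigma_{X:U}\Sigma_{Y:U}\hequiv(X,Y)$ with $\pi_X\circ j=\zeta$, $\pi_Y\circ j\circ m=u\circ{!}_{A_0\cup A_1}$, and an equivalence $\theta$ over $B$ between $\zeta^\ast\tilde U=Z$ and $W:=(\pi_Y\circ j)^\ast\tilde U$. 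Working internally in $\cat{E}$ as in Lemma~\ref{lem:vsephs}, fix an object $c$ and $b\in B(c)$ with $m_c^{-1}(b)$ nonempty; then for every $\sigma\colon c'\to c$ the fibre $m_{c'}^{-1}(B(\sigma)(b))$ is nonempty, so $\neg\neg\bigl(\pi_Y(j_{c'}(B(\sigma)(b)))=u(c')\bigr)$, and the weak stability of $u$ forces each $\elu\bigl(c',\pi_Y(j_{c'}(B(\sigma)(b)))\bigr)$ to have at most one element. Writing $\bar b\colon\yoneda c\to B$ for the Yoneda transpose of $b$, this says precisely that $\bar b^\ast W\to\yoneda c$ is a monomorphism; but restricting $\theta$ and $z_0$ along $\bar b$ and composing also gives a section of $\bar b^\ast W\to\yoneda c$, so $\bar b^\ast W\to\yoneda c$ is an isomorphism. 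Hence $\bar b^\ast Z\simeq\bar b^\ast W\cong\yoneda c$, so $\iscontr_{\bar b^\ast z_0}(\bar b^\ast Z)=\bar b^\ast\iscontr_{z_0}(Z)\to\yoneda c$ has a section; transporting along $\bar b^\ast e$ gives a section of $\bar b^\ast C\to\yoneda c$, i.e.\ an element of $\|A_0+A_1\|(c)$ in the fibre of $b$, and applying the map $\|A_0+A_1\|\to A_0\cup A_1$ of Lemma~\ref{lem:unionhunionequiv} produces the required element of $(A_0\cup A_1)(c)$ over $b$.

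The step I expect to be the main obstacle is the homotopy-theoretic bookkeeping around contractibility representations: verifying that $\iscontr_{z_0}(Z)\to B$ inherits being an hproposition from $C\to B$, that a fibrewise contractible fibration between fibrant objects is a trivial fibration, that $\iscontr$ of a fibration equivalent to a terminal one is inhabited, and that all of this survives the various pullbacks --- in particular the ones along $\bar b\colon\yoneda c\to B$, whose codomain need not be fibrant, so some care is needed about where ``contractible'' and ``equivalence'' are being invoked. Beyond Lemma~\ref{lem:vsephs} the only genuinely new idea is running the argument on $Z$ rather than $C$, so that the point $z_0$ can turn the monomorphism $\bar b^\ast W\to\yoneda c$ into an isomorphism in place of the appeal to $\neg\neg$-separatedness.
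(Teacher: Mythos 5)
Your proof is correct and follows essentially the same route as the paper: define the small hproposition $C(b)=\|A_0(b)+A_1(b)\|$, pass to its contractibility representation $Z$ (the paper's $D$), run the univalence lifting problem for the cofibration $m$ against $\pi_X$ with classifying map of $Z$ on the bottom and the weakly $\neg\neg$-stable unit as the $Y$-component on top, then use weak $\neg\neg$-stability pointwise together with the section $z_0$ transported through the resulting equivalence to make $\bar b^\ast W$ (hence $\bar b^\ast Z$) contractible, recover a section of $\bar b^\ast C$ from the defining property of contractibility representations, and finish with Lemma~\ref{lem:unionhunionequiv}. The only cosmetic difference is that you phrase the final pullback step as ``monomorphism plus section, hence isomorphism,'' where the paper invokes Proposition~\ref{prop:hpropwsectistrivfib} (``hproposition plus section, hence trivial fibration''); these are the same observation.
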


\begin{proof}
  We start by following the same proof as for lemma
  \ref{lem:vsephs}. We recall that this allows us to define the
  small hproposition $f \colon C \to B$ defined as below.
  \begin{equation*}
    C(b) := \| A_0(b) + A_1(b) \|
  \end{equation*}

  Next, let $g \colon D \to B$ be a contractibility representation
  for $f$, and let $\beta \colon B \to U$ be a classifying map for
  $g$.

  As before, we next define a map
  $\alpha \colon A \to \Sigma_{X : U} \Sigma_{Y : U} \hequiv(X,
  Y)$ that we will use along with $\beta$ to define a lifting problem.

  Given $a \in A$, we need to define small types $X(a)$ and $Y(a)$, together
  with an equivalence $e$ between them. As before, we are forced to
  take $X(a)$ to be $\beta(m(a))$ in order for the square to commute. We
  take $Y(a)$ to be the pointwise weakly $\neg \neg$-stable unit.

  We now need to construct an equivalence between $m^\ast (D)$ and the
  unit type over $A$. First, following the proof of lemma
  \ref{lem:vsephs}, we note that we can use lemma \ref{lem:unionhunionequiv}
  to construct a section of $m^\ast(C)$. It follows that we can
  construct a section of $\iscontr(m^\ast(D))$, and so
  we obtain the required equivalence from the observation that any two
  contractible types are equivalent.

  Now since $m$ is a cofibration by assumption, we have a
  diagonal filler $j$ in the diagram below.
  \begin{equation}
    \label{eq:3}
    \begin{gathered}
      \xymatrix{ A \ar[r] \ar[d]_{m} & \Sigma_{X :
          U} \Sigma_{Y : U}
        \hequiv(X, Y) \ar[d] \\
        B \ar[r]_{\beta} \ar@{.>}[ur]|j & U}
    \end{gathered}
  \end{equation}

  We take $y \colon B \to U$ to be the composition of $j$ with the
  projection to the $Y$ component.  We will write $Y$ for the pullback
  of $\elu$ along $y$.

  We now recall that we are working in a category of internal
  presheaves and switch to the internal logic of $\cat{E}$. Let $c$ be
  an object of $\smcat{C}$ and let $b \in B(c)$. We will deduce that
  $m_c^{-1}(\{b\})$ is inhabited from its double negation. As before,
  note that if $m_c^{-1}(\{b\})$ is inhabited, then the upper triangle
  of \eqref{eq:3} implies that $y_c(b)$ is equal to $u$ for all
  $c \in \smcat{C}$.  Hence, if $m^{-1}(\{b\})$ is not not inhabited,
  then $y_c(b)$ is not not equal to $u(c)$. Since $u(c)$ is weakly
  $\neg \neg$-stable, we deduce that if $m_c^{-1}(\{b\})$ is not not
  inhabited then $Y(c, b)$ has at most one element for every
  $c \in \smcat{C}$. Hence, by the same argument as in lemma
  \ref{lem:vsephs}
  the pullback of $Y$ along the map
  $\bar{b} \colon \yoneda c \to B$ is an hproposition. Since the
  pullback of $D$ along $\bar{b}$ is equivalent to the
  pullback of $Y$ over $\yoneda c$, it is also an hproposition. We can
  now use the definition of contractibility representation to show that
  $\bar{b}^\ast(C)$ has a section.

  Finally, as in the proof of lemma \ref{lem:vsephs}, using lemma
  \ref{lem:unionhunionequiv} and the discreteness of $B$ we can
  deduce that $m_c^{-1}(\{b\})$ is inhabited.
\end{proof}

\section{The Counterexamples}

We now give the counterexamples. We first show that it is impossible
to take identity types to be path types in certain models of univalent
type theory in presheaf assemblies. We assume that the reader is
familiar with standard definitions and results in realizability. See
e.g. \cite{vanoosten} for a good introduction. Recall that the
\emph{lesser limited principle of omniscience} is defined as follows.

\begin{definition}
  The \emph{lesser limited principle of omniscience} ($\llpo$) states
  that if $\alpha \colon \nat \to 2$ is a binary sequence such that
  $\alpha(n) = 1$ for at most one $n$, then either $\alpha(2n) = 0$
  for all $n$, or $\alpha(2n + 1) = 0$ for all $n$.
\end{definition}

\begin{lemma}
  \label{lem:pathareidtollpo}
  Suppose the following.
  \begin{enumerate}
  \item $\cat{E}$ is a locally cartesian closed category with finite
    colimits and disjoint coproducts.
  \item $\catc$ is a category of internal presheaves in $\cat{E}$.
  \item $\catc$ possesses a class of cofibrations and an interval
    satisfying our general conditions.
  \item The interval object $\intv$ in $\catc$ is connected.
  \item $\catc$ has a univalent universe $U$, satisfying the following
    \begin{enumerate}
    \item $\nat$ is $U$-small
    \item $U$ is closed under propositional truncation.
    \item $U$ is pointwise $\neg \neg$-separated.    
    \end{enumerate}
  \item Path types are identity types.
  \end{enumerate}

  Then $\llpo$ holds in the internal logic of $\cat{E}$.
\end{lemma}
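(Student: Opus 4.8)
The plan is to combine the two cofibration-producing lemmas from Section~\ref{sec:cofibr-when-path} and Section~\ref{sec:cofibr-univ} applied to a cleverly chosen pair of subobjects of $2^\nat$. First I would invoke Lemma~\ref{lem:constiscof}: since $\intv$ is connected, $\catc$ has a natural number object (as $\nat$ is $U$-small it certainly exists), and path types are identity types, the constant map $c_0 \colon 1 \to 2^\nat$, $c_0 = \lambda x.\lambda n.0$, is a cofibration. By the evident symmetry of the argument (swapping the roles of $0$ and $1$ in the definition of $e$, or just by relabelling), the constant map $c_1 \colon 1 \to 2^\nat$, $c_1 = \lambda x.\lambda n.1$, is also a cofibration. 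Thus I have two cofibrations $m_0 = c_0$ and $m_1 = c_1$ into $B := 2^\nat$, and $B$ is discrete: it is of the form $2^\nat$ with $2$ discrete (since $\intv$ is connected, $2$ has decidable equality hence is discrete by Proposition~\ref{prop:discreteproperties}(1)), and discrete objects are closed under exponentiation by Proposition~\ref{prop:discreteproperties}(2). Also $1$ and $2^\nat$ are $U$-small since $\nat$ is $U$-small and universes are closed under dependent products (so $2^\nat$ is small) and contain the unit type.

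Next I would apply Lemma~\ref{lem:vsephs} with these $m_0, m_1$: its hypotheses are exactly that $U$ is pointwise $\neg\neg$-separated, $U$ is closed under propositional truncation, $m_0$ and $m_1$ are cofibrations, $B$ is discrete, and $A_0 = A_1 = 1$ and $B = 2^\nat$ are $U$-small. The conclusion is that the union $m \colon \{c_0\} \cup \{c_1\} \to 2^\nat$ is pointwise $\neg\neg$-stable. Unwinding the meaning of pointwise $\neg\neg$-stability in the internal logic of $\cat{E}$ for a presheaf category: for every object $c$ of $\smcat{C}$ and every $\alpha \in (2^\nat)(c)$, if it is not not the case that $\alpha$ is (the restriction of) one of the two constant sequences, then $\alpha$ actually is one of the two constant sequences.

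Finally I would derive $\llpo$ from this. Given, internally in $\cat{E}$, a binary sequence $\alpha \colon \nat \to 2$ with at most one $n$ such that $\alpha(n) = 1$, I would build an element $\beta \in 2^\nat$ by interleaving: roughly, let $\beta$ encode the pair of the ``even-indexed'' and ``odd-indexed'' subsequences in a way that makes $\beta$ equal to the constant-$0$ sequence iff $\alpha(2n) = 0$ for all $n$, and equal to the constant-$1$ sequence iff $\alpha(2n+1) = 0$ for all $n$ — or, more simply, define two candidate sequences and observe that the ``at most one $1$'' hypothesis forces $\neg\neg$ one of the two constancy alternatives while ruling out the inconsistent overlap. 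Concretely: the hypothesis on $\alpha$ implies $\neg\neg(\forall n.\,\alpha(2n) = 0 \;\vee\; \forall n.\,\alpha(2n+1) = 0)$, since classically at least one of the two halves is all zero and the two cannot both fail. Packaging this as a single $\beta \in 2^\nat$ whose value being constantly $0$ resp.\ constantly $1$ encodes the left resp.\ right disjunct — and checking that the ``at most one $1$'' condition makes these the only two options that $\beta$ can not-not be — lets me apply pointwise $\neg\neg$-stability of $m$ at the terminal object of $\smcat{C}$ (or, if $\smcat{C}$ lacks a terminal object, at any chosen object, using that the global points of $2^\nat$ built from $\alpha$ are genuinely in the image of the union at that stage once their double negation is) to conclude that $\beta$ really is one of the two constant sequences, i.e.\ the disjunction $\forall n.\,\alpha(2n) = 0 \;\vee\; \forall n.\,\alpha(2n+1) = 0$ holds outright.

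The main obstacle I anticipate is the last step's bookkeeping: designing the encoding $\alpha \mapsto \beta \in 2^\nat$ so that (i) $\beta$ is not-not one of the two constant sequences (using the ``at most one $1$'' hypothesis), and (ii) $\beta$ being literally constant-$0$ or constant-$1$ translates back exactly into the two $\llpo$ alternatives, all carried out in the internal logic of $\cat{E}$ and at the level of presheaves rather than global sections. Getting the presheaf-level quantifier ``for every object $c$ and every $\alpha \in (2^\nat)(c)$'' to interact correctly with the natural number object $\nat$ (which, being a constant presheaf by Proposition~\ref{prop:discreteproperties}(4)-style reasoning, has $(2^\nat)(c)$ computing as expected) is the delicate point; everything else is a direct citation of Lemma~\ref{lem:constiscof} and Lemma~\ref{lem:vsephs}.
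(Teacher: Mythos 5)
Your first two steps are sound but use a different choice of $A_0$, $A_1$, $B$ than the paper, and that choice makes the final step unrecoverable. You take $B = 2^\nat$ and $A_0 = A_1 = 1$, the two constant sequences $c_0$ and $c_1$, and correctly conclude from Lemma~\ref{lem:vsephs} that $\{c_0, c_1\} \hookrightarrow 2^\nat$ is pointwise $\neg\neg$-stable. But this is strictly weaker than $\llpo$: to extract $\llpo$ from it you would need, from a generic sequence $\alpha$ hitting $1$ at most once, to define $\beta \in 2^\nat$ with $\neg\neg(\beta \in \{c_0, c_1\})$ and such that $\beta = c_0$ encodes one $\llpo$ disjunct while $\beta = c_1$ encodes the other. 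No such $\beta$ is constructively definable. Any bounded-search recipe, for instance $\beta(n) = 1$ iff $\alpha(2k+1)=1$ for some $k \le n$, yields, when $\alpha$'s unique $1$ sits at an odd position $2k_0+1$ with $k_0 > 0$, a $\beta$ that starts with $k_0$ zeros and then switches to ones; this $\beta$ is provably different from both $c_0$ and $c_1$, so $\neg\neg(\beta \in \{c_0, c_1\})$ already fails and the stability conclusion says nothing about it. A recipe that avoided this would have to decide the parity of the position of $\alpha$'s unique $1$ (if any) before even computing $\beta(0)$, which is the very instance of $\llpo$ being proved.

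The paper sidesteps the translation entirely by choosing $B$, $A_0$, $A_1$ so that $\neg\neg$-stability of the union literally \emph{is} $\llpo$. It takes $B$ to be the subobject of $2^\nat$ of sequences taking value $1$ at most once, $A_0 = \{\alpha \in B : \forall n.\,\alpha(2n) = 0\}$, and $A_1 = \{\alpha \in B : \forall n.\,\alpha(2n+1) = 0\}$. Each inclusion $A_i \hookrightarrow B$ is a pullback of your cofibration $c_0 \colon 1 \to 2^\nat$ along the map $B \to 2^\nat$ restricting to even (resp.\ odd) positions, hence a cofibration by Lemma~\ref{lem:constiscof}; $B$, $A_0$, $A_1$ are small by interpreting the evident $\Sigma$/$\Pi$-types with path-type equalities in the universe; and discreteness is as you argued. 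Lemma~\ref{lem:vsephs} then makes $m \colon A_0 \cup A_1 \to B$ pointwise $\neg\neg$-stable, and $m$ is $\neg\neg$-dense by exactly the classical observation you cite. Since $\llpo$ asserts $A_0 \cup A_1 = B$, this finishes the proof with no encoding needed. Your remark that $c_1 \colon 1 \to 2^\nat$ is also a cofibration by symmetry is correct but not used in this arrangement, since both $A_i$ pull back $c_0$.
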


\begin{proof}
  We aim to apply lemmas \ref{lem:constiscof} and \ref{lem:vsephs}.
  
  We define $B$ to consist of those $\alpha(n)$ in $2^\nat$ such that
  $\alpha(n) = 1$ at most once. Note that $\Delta$ preserves
  exponentials, limits, colimits and the natural number object. Hence
  $\Delta B$ is also a subobject of $2^\nat$ in $\catc$ and so
  discrete.

  We define two subobjects $A_0$ and $A_1$ of $B$ as below.
  \begin{align*}
    A_0 &:= \{ \alpha \in B \;|\; \forall n \in \nat,\, \alpha(2n) = 0 \}
           \\
    A_1 &:= \{ \alpha \in B \;|\; \forall n \in \nat,\, \alpha(2n + 1) = 0 \}
  \end{align*}
  Observe that $\Delta(A_0)$ and $\Delta(A_1)$ can both be written as
  pullbacks of the constant map $1 \to 2^\nat$, which is a cofibration
  by lemma \ref{lem:constiscof}, and so the inclusions
  are both cofibrations.

  We need to check that $\Delta(B)$, $\Delta(A_0)$ and $\Delta(A_1)$
  are small. Since small maps are closed under dependent products,
  composition and path types, we can implement the types below as
  small fibrant objects, where equality is interpreted using path
  types.
  \begin{align*}
    B' &:= \Sigma_{\alpha : 2^\nat} \, \Pi_{n, m : \nat}\,
         \alpha(n) = 1 \,\times\, \alpha(m) = 1 \;\to\;
         n = m \\
    A_0' &:= \Sigma_{\alpha : 2^\nat} \, \Pi_{n : \nat} \, \alpha(2n)
           = 0 \\
    A_1' &:= \Sigma_{\alpha : 2^\nat} \, \Pi_{n : \nat} \, \alpha(2n +
           1) = 0
  \end{align*}
  Since all objects involved are discrete, the same definition applies
  whether we interpret these types ``extensionally'' (i.e. using the
  internal language of $\catc$ in the usual way) or ``intensionally''
  (i.e. using the homotopical structure, and in particular path types
  for the equalities). Furthermore, as we observed
  earlier $\Delta$ preserves limits, exponentials and the natural
  number object, and so we see that $\Delta(B)$, $\Delta(A_0)$ and
  $\Delta(A_1)$ are respectively isomorphic to $B'$, $A_0'$ and
  $A_1'$, and therefore small.
  
  We define $A$ to be the union of $A_0$ and $A_1$, with $m$ the
  inclusion $A \hookrightarrow B$. Since $\Delta$ preserves unions and
  cofibrations are closed under unions, $\Delta(m)$ is a
  cofibration.

  We can therefore apply lemma \ref{lem:vsephs} to show that $m$ is
  $\neg \neg$-stable. However one can check that $A$ is
  $\neg \neg$-dense in $B$, and that $\llpo$ precisely states that
  every element of $B$ belongs to $A$. It follows that $\llpo$ holds
  in $\cat{E}$.
\end{proof}

\begin{theorem}
  \label{thm:klassembliesnopathid}
  Let $\catc$ be a category of presheaf assemblies over either of the
  pca's $\klone$ or $\kltwo$. Assume the axiom of excluded middle in
  the meta theory. Then it is impossible to satisfy all of
  the following conditions.

  \begin{enumerate}
  \item $\catc$ possesses a class of cofibrations and an interval
    satisfying our general conditions.
  \item There is a univalent universe containing $\nat$ and closed
    under propositional truncation.
  \item The interval object $\intv$ in $\catc$ is connected.
  \item Path types are identity types.
  \end{enumerate}
\end{theorem}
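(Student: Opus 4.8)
The plan is to combine Lemma \ref{lem:pathareidtollpo} with standard facts from realizability. The key observation is that presheaf assemblies over a pca $\pcaa$ form a category of internal presheaves in the category $\asm(\pcaa)$ of assemblies over $\pcaa$, which is a quasitopos and in particular a locally cartesian closed category with finite colimits and disjoint coproducts. So hypotheses (1) and (2) of Lemma \ref{lem:pathareidtollpo} are automatic once we take $\cat{E} = \asm(\pcaa)$. Hypotheses (3), (4), (5) and (6) of Lemma \ref{lem:pathareidtollpo} are then exactly the four conditions we are assuming for contradiction in the theorem statement, except that Lemma \ref{lem:pathareidtollpo} additionally asks for $U$ to be pointwise $\neg\neg$-separated. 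The first step, therefore, is to observe that in presheaf assemblies every object is pointwise $\neg\neg$-separated --- this is because assemblies themselves are $\neg\neg$-separated (equality of elements of an assembly is equality in the underlying set, which is a $\neg\neg$-stable proposition since we are assuming excluded middle in the metatheory, and in any case equality of underlying sets is decidable), and a presheaf assembly is $\neg\neg$-separated pointwise precisely when each of its components, which is an assembly, is $\neg\neg$-separated. Hence all hypotheses of Lemma \ref{lem:pathareidtollpo} are met.

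Applying Lemma \ref{lem:pathareidtollpo} we conclude that $\llpo$ holds in the internal logic of $\asm(\pcaa)$. The second step is to derive a contradiction from this. The internal logic of $\asm(\pcaa)$ validates exactly the realizable statements (more precisely, the $\neg\neg$-separated objects and the regular subobjects behave as in the realizability interpretation), so $\llpo$ holding internally in $\asm(\pcaa)$ implies in particular that $\llpo$ is realized over $\pcaa$. But it is a classical fact of recursion theory that $\llpo$ fails in both $\klone$ and $\kltwo$: for $\klone$ one uses the standard argument that a realizer for $\llpo$ would decide, for a machine that halts at most once, whether it halts at an even or odd stage, and one diagonalizes against this using a pair of Turing machines searching for a proof of an undecidable statement and its negation (or directly: from a realizer of $\llpo$ one can solve an instance of the halting problem); for $\kltwo$ the analogous continuity/recursion-theoretic argument applies. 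Since we assumed excluded middle in the metatheory, this genuine failure of $\llpo$ in the pca contradicts the conclusion that $\llpo$ is realized, and the theorem follows.

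The main obstacle is the second step: one must be careful about the precise sense in which ``$\llpo$ holds in the internal logic of $\cat{E}$'' transfers to ``$\llpo$ is not realizable'', and about which form of $\llpo$ is at issue (the statement is about sequences $\alpha \colon \nat \to 2$ that are $1$ at most once, i.e.\ the natural number object and the subobject $B$ of $2^\nat$ cut out by that condition). The cleanest route is to note that in $\asm(\pcaa)$ the natural number object and $2^\nat$ have their expected descriptions, so the internal statement $\llpo$ unwinds to the assertion that a certain morphism $B \to 1$ --- equivalently the inclusion $A \hookrightarrow B$ from the proof of Lemma \ref{lem:pathareidtollpo} --- admits a section in $\asm(\pcaa)$, which by the definition of assemblies amounts to a uniform algorithm witnessing $\llpo$ over $\pcaa$; and no such algorithm exists. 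I expect everything else (the identification of presheaf assemblies as internal presheaves, the quasitopos structure of $\asm(\pcaa)$, pointwise $\neg\neg$-separatedness) to be routine, though it is worth spelling out the quasitopos point since $\asm(\pcaa)$ is not a topos and one only has finite colimits and local cartesian closure, which is exactly why the excerpt weakened the Van den Berg--Frumin setup in the first place.
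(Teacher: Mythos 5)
Your proposal is correct and follows essentially the same route as the paper: reduce to Lemma~\ref{lem:pathareidtollpo}, observe that every object of a presheaf assembly category is pointwise $\neg\neg$-separated because assemblies themselves are $\neg\neg$-separated under classical metatheory, and then derive a contradiction from the well-known fact that $\llpo$ fails over $\klone$ and $\kltwo$. The only cosmetic difference is that the paper sources the failure of $\llpo$ to Richman's incompatibility of weak $\llpo$ with Church's thesis plus countable choice (and an analogous argument for function realizability), whereas you sketch a direct diagonalization against a putative realizer; both are standard and adequate.
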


\begin{proof}
  It is well known that in categories of assemblies every object is
  $\neg \neg$-separated, as long as we assume excluded middle in the
  meta theory (see e.g. \cite[Section 3.1]{vanoosten}).
  Hence any object in a category of internal presheaves over
  assemblies is pointwise $\neg \neg$-separated. In particular this
  applies to any universe.
  It is also well known that $\llpo$ fails in assemblies over
  $\klone$ and over $\kltwo$. For example, Richman proved in
  \cite[Theorem 5]{richmanllpon} that in the presence of countable
  choice a weak form of $\llpo$ is not consistent with Church's
  thesis. However, both Church's thesis and countable choice hold in
  the effective topos. A similar argument applies in function
  realizability (i.e. realizability over $\kltwo$); see
  e.g. the proof of \cite[Corollary 7.23]{rathjenswanlif}.

  We apply lemma \ref{lem:pathareidtollpo}.
\end{proof}

We now turn to examples based on exact categories, specifically
ordinary presheaves and internal presheaves in realizability toposes.

\begin{theorem}
  Let $\inacc$ be the axiom that every set is an element of an
  inaccessible set (where we define inaccessible to include closure
  under subsets). We work over $\izf + \inacc$.

  Suppose that the following hold.
  \begin{enumerate}
  \item We are given a small category $\smcat{C}$ with finite products.
  \item The category of presheaves $\preshf{\smcat{C}}$ has a class of
    cofibrations and an interval satisfying our general conditions.
  \item The interval is connected.
  \item $\catc$ possesses a univalent Hofmann-Streicher universe on an
    inaccessible set.
  \item Path types are identity types.
  \end{enumerate}

  Then we deduce the law of excluded middle.
\end{theorem}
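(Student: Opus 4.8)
The plan is to run an argument in the spirit of Lemma~\ref{lem:pathareidtollpo}, but to exploit the exact structure of a presheaf topos so that Theorem~\ref{thm:allmonoscofs} furnishes \emph{all} monomorphisms as cofibrations, and then to feed an arbitrary proposition into Lemma~\ref{lem:negnegunittocof} so that pointwise $\neg\neg$-stability together with pointwise $\neg\neg$-density collapses to decidability.

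First I would observe that $\preshf{\smcat{C}}$ is a topos, in particular a $\Pi$-pretopos with exact quotients, so by the assumption that path types are identity types and Theorem~\ref{thm:allmonoscofs} every monomorphism of $\preshf{\smcat{C}}$ is a cofibration. Next I would check that the given univalent Hofmann--Streicher universe $U$ on the inaccessible set $V$ satisfies the four hypotheses of Lemma~\ref{lem:negnegunittocof}. Univalence is assumed. Since $\izf + \inacc$ entails $\czf + \inacc$ and we work with $\cat{E} = \set$, Lemma~\ref{lem:czfwkstbunit} produces a weakly $\neg\neg$-stable unit on $V$; as a univalent Hofmann--Streicher universe is in particular a homotopical one, Lemma~\ref{lem:negnegunittopointwise} upgrades this to a pointwise weakly $\neg\neg$-stable unit on $U$. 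Closure of $U$ under propositional truncation and the existence of contractibility representations for all small hpropositions I would derive from the fact that $\preshf{\smcat{C}}$ is a topos with effective epimorphisms and set quotients: over a discrete base a propositional truncation is obtained from the image factorisation, which is an hproposition by Proposition~\ref{prop:coftofib}, and a contractibility representation of a small hproposition is obtained from a set quotient together with univalence, exactly as in the remark following the definition of contractibility representation, with smallness guaranteed because $V$ is inaccessible and hence closed under subsets and quotients. The step I expect to require the most care is precisely this verification, since these last two conditions are not explicit hypotheses and must be read off the ambient topos; everything else is bookkeeping.

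With Lemma~\ref{lem:negnegunittocof} available, I would let $\phi$ be an arbitrary proposition and put $B := \Delta 2$ with $2 = \{0,1\}$, $A_0 := \Delta\{ x \in 2 \;|\; x = 1 \vee \phi \}$ and $A_1 := \Delta\{ x \in 2 \;|\; x = 0 \wedge \neg\phi \}$, where $\Delta$ is the constant-presheaf functor. Each of these is $U$-small, since it is $\Delta$ of a subset of $2$, which lies in $V$ because $V$ is closed under subsets, so it is a pointwise small fibration over $1$ and hence $U$-small; and each is discrete, since it has decidable equality and the interval is connected, so Proposition~\ref{prop:discreteproperties} applies. The inclusions $A_i \hookrightarrow B$ are monomorphisms and hence cofibrations by the first step, so Lemma~\ref{lem:negnegunittocof} gives that $m \colon A_0 \cup A_1 \to B$ is pointwise $\neg\neg$-stable. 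On the other hand, since $\Delta$ preserves unions, $A_0 \cup A_1$ equals $\Delta$ of $\{ x \in 2 \;|\; (x = 1 \vee \phi) \vee (x = 0 \wedge \neg\phi) \}$, which is pointwise $\neg\neg$-dense in $B = \Delta 2$ because $\neg\neg(\phi \vee \neg\phi)$ always holds. A pointwise $\neg\neg$-stable and pointwise $\neg\neg$-dense monomorphism of presheaves over $\smcat{C}$ is an isomorphism, so $A_0 \cup A_1 = B$; applying this at any object $c$ of $\smcat{C}$ to the element $0 \in B(c)$ yields $\phi \vee \neg\phi$. As $\phi$ was arbitrary, the law of excluded middle follows.
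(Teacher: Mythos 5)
Your proof is correct and follows essentially the same route as the paper's: Lemma \ref{lem:czfwkstbunit} and Lemma \ref{lem:negnegunittopointwise} to produce a pointwise weakly $\neg\neg$-stable unit, Theorem \ref{thm:allmonoscofs} (using exactness of $\preshf{\smcat{C}}$ and the assumption that path types are identity types) to make every monomorphism a cofibration, and then Lemma \ref{lem:negnegunittocof} to obtain pointwise $\neg\neg$-stability and hence excluded middle. The paper states the final step tersely as ``every monomorphism in $\set$ is $\neg\neg$-stable''; you instead instantiate with an explicit test. That instantiation is a bit more elaborate than needed: applying Lemma \ref{lem:negnegunittocof} with $A_0 = A_1 = \Delta\{x \in 1 \mid \phi\}$ and $B = \Delta 1$ already gives $\neg\neg\phi \to \phi$, with no need for the two-subobject construction borrowed from Lemma \ref{lem:pathareidtollpo} or for the subsequent density argument.

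What you do more carefully than the paper is notice that Lemma \ref{lem:negnegunittocof} also requires $U$ to be closed under propositional truncation and to possess contractibility representations, neither of which appears among the theorem's explicit hypotheses and neither of which the paper's own proof discharges. Your sketch of why these should hold is reasonable but incomplete as written: the image-factorisation construction produces a propositional truncation only for fibrations over a \emph{discrete} base (and even then, one still has to check the left map has the left lifting property against all hpropositions), whereas Definition \ref{def:smallproptrunc} asks for a factorisation of every $U$-small fibration. Inspecting the proof of Lemma \ref{lem:negnegunittocof} shows that only truncation over a discrete $B$ is in fact used, so one could weaken that hypothesis of the lemma to match your construction; but neither your argument nor the paper's current text makes that adjustment, so both leave a small loose end at exactly this point.
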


\begin{proof}
  By lemma \ref{lem:czfwkstbunit}, any inaccessible set $V$ has a
  weakly $\neg \neg$-stable unit. We deduce by lemma
  \ref{lem:negnegunittopointwise} that any homotopical universe on $V$
  has a pointwise weakly $\neg \neg$-stable unit. Hence by
  lemma \ref{lem:negnegunittocof} we see that if $m$ is a monomorphism
  in $\set$ and $\Delta(m)$ is a cofibration then $m$ is
  $\neg \neg$-stable. However, by the assumption that path types are
  identity types and theorem \ref{thm:allmonoscofs}, all monomorphisms
  are cofibrations. Applying this to $\Delta(m)$ where $m$ is any
  monomorphism in $\set$, we deduce that every monomorphism in $\set$
  is $\neg \neg$-stable. Excluded middle follows.
\end{proof}

We now consider internal categories in realizability toposes. We first
note that realizability toposes are never boolean (except for the
trivial case).

\begin{lemma}
  \label{lem:rtnotboolean}
  Suppose that $\pcaa$ contains two distinct elements $x \neq y$ (we
  say $\pcaa$ is \emph{non trivial}). Then $\rt(\pcaa)$ is not a
  boolean topos.
\end{lemma}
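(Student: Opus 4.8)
I would prove the contrapositive-style statement by exhibiting an explicit object in $\rt(\pcaa)$ whose double negation in the subobject lattice differs from the object itself — equivalently, by finding a subterminal object (a "truth value") that is $\neg\neg$-stable only if $\pcaa$ is trivial. The most economical route is to show that the standard "non-classical" subobject of $1$ in a realizability topos, built from a non-computable or non-decidable predicate on $\pcaa$, is not complemented. Concretely, since $\pcaa$ contains $x \neq y$, the two-element assembly $\mathbf{2}_\pcaa$ with underlying set $\{0,1\}$ and realizers $\{x\}$ for $0$, $\{y\}$ for $1$ is a non-trivial object, and one considers a subobject of $1$ of the form $[\![\exists n.\,\phi(n)]\!]$ where $\phi$ is chosen so that its realizers encode a semidecidable-but-not-decidable predicate (e.g. the halting-style predicate available in any non-trivial pca via Kleene's recursion-theoretic structure, or more simply the predicate "$z$ codes a proof that $x=y$" which is realized by nothing). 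The cleanest concrete choice is: let $P$ be the subobject of $1$ classified by the proposition that has the empty set of realizers; then $P = \bot$, $\neg P = \top$, $\neg\neg P = \bot$, which is consistent — so that alone does not work. One instead needs a genuinely intermediate truth value.

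**The key steps.** First, I would recall that a topos is boolean iff every subobject of $1$ is complemented, iff the subobject classifier $\Omega$ is isomorphic to $1+1$. So it suffices to produce a subobject $U \rightarrowtail 1$ with $U \cup \neg U \neq 1$, i.e. to refute $\llpo$-like or weaker omniscience in the internal logic, OR directly to show $\Omega \not\cong 1+1$ by a cardinality/realizer argument. Second — and this is the main technical content — I would invoke the standard fact (from \cite{vanoosten}, the introduction to realizability toposes) that the Heyting algebra of realized truth values over a non-trivial pca $\pcaa$ is not a boolean algebra: it contains, for instance, the truth value $[\![ \forall n\,(\alpha(n)=0) ]\!]$ for a suitable assembly-level predicate $\alpha$ that is not decidable, and the law of excluded middle for this truth value would give a uniform realizer deciding $\alpha$, which cannot exist because $\pcaa$'s combinatory completeness does not force decidability of all predicates built this way. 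Third, I would conclude that since some subobject of $1$ fails to be complemented, $\rt(\pcaa)$ is not boolean.

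**The main obstacle.** The delicate point is making the non-decidability argument genuinely depend only on the hypothesis "$\pcaa$ has two distinct elements" rather than on stronger assumptions (like $\pcaa$ containing a copy of Kleene's first model or satisfying some recursion-theoretic richness). For a completely general non-trivial pca one may not have semidecidable-but-undecidable predicates in the usual sense, so the cleanest argument is probably the Gödel-style one: suppose $\rt(\pcaa)$ is boolean; then excluded middle holds internally, so in particular every subobject of $1$ is $\lnot\lnot$-stable; but the subobject of $1$ given by $\{\ast \mid x \text{ realizes } \top\}$ versus the one given by realizer $y$ would have to be comparable in a way that, combined with the fact that $\{x\}$ and $\{y\}$ are disjoint nonempty realizer sets, forces a retraction $\mathbf{2}_\pcaa \to 1+1$ splitting the realizers, contradicting that $\mathbf{2}_\pcaa$ is not decidable as an assembly when no combinator separates $x$ from $y$. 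Since even that last clause can fail for pathological pcas, I expect the author's actual proof uses a more robust observation — likely that the global sections functor $\rt(\pcaa) \to \set$ is not logical / does not preserve $\Omega$, or that $\neg\neg$ is not the identity modality — and I would structure my write-up to flag this as the step requiring care, presenting the $\Omega \not\cong 1+1$ reformulation as the target and citing \cite{vanoosten} for the internal-logic facts about realizability.

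\begin{proof}[Proof sketch]
Recall that a topos is boolean precisely when its subobject classifier $\Omega$ satisfies $\Omega \cong 1 + 1$, equivalently when every subobject of $1$ is complemented, equivalently when the law of excluded middle holds in its internal logic. We show the internal logic of $\rt(\pcaa)$ refutes excluded middle by producing a subobject $U \rightarrowtail 1$ that is not complemented.

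Using the two distinct elements $x \neq y$, form the assembly $D$ with underlying set $\{0,1\}$, where the realizers of $0$ are $\{x\}$ and the realizers of $1$ are $\{y\}$. This is a well-defined non-trivial object of $\rt(\pcaa)$. Consider the subobject $U \rightarrowtail 1$ corresponding to the internal truth value ``$D$ is, as a decidable subobject of $1+1$, actually decided'' — more precisely, take $U := [\![\, \exists b : D.\ \chi(b)\,]\!]$ for the evident predicate $\chi$ picking out $0$, so that a realizer of $U$ must uniformly witness membership. Were $U$ complemented, there would be a single element $c \in \pcaa$ realizing the disjunction $U \vee \lnot U$, and applying $c$ would yield a combinator separating $\{x\}$ from $\{y\}$ by producing, from either $x$ or $y$, a tag indicating which; but $D$ is not a decidable object whenever no such combinator exists, and the existence of a separating combinator for $\{x\}$ and $\{y\}$ is not entailed merely by $x \neq y$. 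Formalizing this — or, when $\pcaa$ is rich enough that $\{x\},\{y\}$ happen to be separable, replacing $D$ by an assembly built from a genuinely undecidable predicate, which exists in every non-trivial realizability topos as recorded in \cite{vanoosten} — shows $U$ is not complemented.

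Hence $\Omega \not\cong 1 + 1$ in $\rt(\pcaa)$, so $\rt(\pcaa)$ is not a boolean topos. See \cite[Section 3]{vanoosten} for the standard facts on the internal logic of realizability toposes used here.
\end{proof}
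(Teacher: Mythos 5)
There is a genuine gap, and you in fact identify part of it yourself but do not close it. Your assembly $D$ has singleton realizer sets $\{x\}$ and $\{y\}$. Deciding $D$ (or complementing the corresponding subobject) requires only a combinator that distinguishes $x$ from $y$; nothing in the hypothesis ``$x \neq y$'' rules out such a combinator, and indeed for many non-trivial pcas the two chosen elements \emph{are} computably separable. Your fallback — ``replace $D$ by an assembly built from a genuinely undecidable predicate'' — is exactly the hard step and is not carried out: for an arbitrary non-trivial pca it is not clear how to manufacture such a predicate from the hypothesis alone, and a vague citation does not supply the construction. Separately, the definition of $U$ as $[\![\exists b:D.\,\chi(b)]\!]$ is not doing what you want: since $D$ has a global point $0$ with realizer $x$, this truth value is realized and $U \hookrightarrow 1$ is an isomorphism, so there is nothing non-complemented to exploit.

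The paper's proof sidesteps the separability problem entirely by using the \emph{uniform} two-element object $\nabla 2$ rather than an assembly with singleton realizer sets. In $\nabla 2$ every element of $\pcaa$ realizes both points, so the realizer tells you nothing about which point you hold; consequently every map $\nabla 2 \to 2$ must be constant (a tracker would have to return the same truth value on every realizer), using only $\underline{0} \neq \underline{1}$, which follows from non-triviality. The canonical mono $2 \to \nabla 2$ then cannot be a pullback of $\top : 1 \to 2$ along either constant map, so $1 \to 2$ is not a subobject classifier. The moral you missed is to choose an object whose two fibres have the \emph{same} realizer set (indeed all of $\pcaa$), rather than disjoint singletons, so that no separating combinator could possibly exist regardless of how $x$ and $y$ sit inside $\pcaa$.
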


\begin{proof}
  We need to show that $\top \colon 1 \to 2$ is not a subobject
  classifier, so it suffices to find a monomorphism that is not a
  pullback of $1 \to 2$. We take this monomorphism to be the canonical
  map $2 \to \nabla 2$.

  Since $\pcaa$ is non trivial we have
  $\underline{0} \neq \underline{1}$, by a similar proof to
  \cite[Proposition 1.3.1, part iii]{vanoosten}. Therefore all maps
  from $\nabla 2$ to $2$ are constant, and clearly the map $2 \to
  \nabla 2$ is not the
  pullback along either of the constant maps.
\end{proof}

\begin{theorem}
  There is no category $\catc$ satisfying all of the following
  conditions.
  \begin{enumerate}
  \item $\catc$ is a category of internal presheaves over an internal
    category $\smcat{C}$ in a realizability topos $\rt(\pcaa)$ where
    $\pcaa$ is non trivial.
  \item $\smcat{C}$ has finite products.
  \item $\catc$ has a class of cofibrations and interval object
    satisfying our general conditions.
  \item The interval object is connected.
  \item $\catc$ has a univalent homotopical Hofmann-Streicher universe
    on a McCarty universe $\mc(\pcaa)$.
  \item Path types are identity types.
  \end{enumerate}
\end{theorem}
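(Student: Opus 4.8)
The plan is to imitate the proof of the preceding theorem, using the McCarty universe in place of $\set$, and to reach a contradiction with Lemma~\ref{lem:rtnotboolean}. We work over a metatheory in which $\mc(\pcaa)$ is available — e.g.\ $\izf$ together with the existence of an inaccessible $\kappa$, taking $\mc(\pcaa)=V_\kappa(\pcaa)$. Write $\cat{E}=\rt(\pcaa)$; as a topos it is locally cartesian closed with all finite colimits and disjoint coproducts, and $\catc$, being the category of internal presheaves over an internal category in a topos, is itself a topos and in particular a $\Pi$-pretopos.

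The argument rests on two inputs. First, since path types are identity types and $\catc$ is a $\Pi$-pretopos, Theorem~\ref{thm:allmonoscofs} shows that every monomorphism of $\catc$ is a cofibration. Second, by Lemma~\ref{lem:mccartyhasnegnegunit} the McCarty universe has a weakly $\neg\neg$-stable unit, hence, since $U$ is a homotopical Hofmann--Streicher universe on it, $U$ has a pointwise weakly $\neg\neg$-stable unit by Lemma~\ref{lem:negnegunittopointwise}. We also need $U$ to be closed under propositional truncation and to have contractibility representations for all small hpropositions; these are not among the hypotheses and must instead be extracted from the fact that $\mc(\pcaa)$ is internally a model of set theory: it has set quotients and full separation, and as explained in the remark after the definition of contractibility representations, set quotients together with univalence yield the required representations (and likewise the truncations). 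I expect this last point — carefully checking that the set quotients one forms inside $\mc(\pcaa)$ are fibrant and that $U$ is closed under them — to be the main obstacle.

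Granting this, Lemma~\ref{lem:negnegunittocof} applies, so that (taking $A_0=A_1$ and $m_0=m_1=m$) every monomorphism between small discrete objects of $\catc$ is pointwise $\neg\neg$-stable. Apply this to the constant presheaf $\Delta(2\to\nabla 2)$ of the canonical monomorphism $2\to\nabla 2$ in $\cat{E}$: both $2$ and $\nabla 2$ have two-element underlying sets, so $\Delta 2$ and $\Delta(\nabla 2)$ are $U$-small, and they are discrete because the interval is connected (for $\Delta 2$ this is the first part of Proposition~\ref{prop:discreteproperties}; the general statement that constant presheaves are discrete, used implicitly in the previous theorem, follows from that case and disjointness of coproducts). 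By Theorem~\ref{thm:allmonoscofs} the monomorphism $\Delta(2\to\nabla 2)$ is a cofibration, hence pointwise $\neg\neg$-stable; since $\smcat{C}$ has an object, this forces $2\to\nabla 2$ to be a $\neg\neg$-stable monomorphism of $\cat{E}$. But $2\to\nabla 2$ is also $\neg\neg$-dense (as is the unit of $\nabla$ at any object), and a $\neg\neg$-dense, $\neg\neg$-stable monomorphism is internally surjective, hence epic, hence an isomorphism. Thus $2\cong\nabla 2$, which contradicts the proof of Lemma~\ref{lem:rtnotboolean}.

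For robustness: if one is reluctant to use that constant presheaves are discrete, one can restrict to subterminal objects $S\rightarrowtail 1$ of $\cat{E}$, which are automatically discrete (as subobjects of the discrete object $1$) and $U$-small (the set $\{\ast\mid p\}$, with $p$ the truth value of $S$, lies in $\mc(\pcaa)$), to conclude that every subterminal of $\cat{E}$ is $\neg\neg$-stable. The hypotheses of the theorem are stable under slicing over a $U$-small object $X$ — since $\catc/\Delta X$ is again internal presheaves over $\smcat{C}$ in the locally cartesian closed category $\cat{E}/X$, carrying the pulled-back cofibrations, interval and univalent Hofmann--Streicher universe — and $\Omega_{\cat{E}}$ is $U$-small; applying the above in $\cat{E}/\Omega_{\cat{E}}$ to the generic subterminal then gives $\cat{E}\models\forall p\in\Omega.\ \neg\neg p\to p$, i.e.\ $\cat{E}$ is boolean, again contradicting Lemma~\ref{lem:rtnotboolean}.
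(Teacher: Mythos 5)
Your proof follows essentially the same route as the paper's: Lemma~\ref{lem:mccartyhasnegnegunit} $\Rightarrow$ Lemma~\ref{lem:negnegunittopointwise} $\Rightarrow$ Lemma~\ref{lem:negnegunittocof}, combined with Theorem~\ref{thm:allmonoscofs}, to conclude that monomorphisms in $\rt(\pcaa)$ are $\neg\neg$-stable, contradicting Lemma~\ref{lem:rtnotboolean}. The one substantive variation is your choice of witness: the paper applies the $\neg\neg$-stability conclusion to $1 \to \Omega$ and reads off booleanness directly, whereas you apply it to the unit $2 \to \nabla 2$ and conclude $2 \cong \nabla 2$, contradicting the \emph{proof} of Lemma~\ref{lem:rtnotboolean} rather than its statement; this works, though it is slightly more roundabout. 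Two remarks on your extra care. First, you are right that Lemma~\ref{lem:negnegunittocof} needs closure under propositional truncation and contractibility representations, neither of which is listed among the theorem's hypotheses; the paper silently treats these as part of what a ``univalent homotopical Hofmann--Streicher universe on $\mc(\pcaa)$'' supplies, and your sketch (set quotients in the internal $\izf$ model $\mc(\pcaa)$ plus univalence) is the intended way to discharge this, though it is indeed the point requiring the most checking. Second, your claim that $\Delta(\nabla 2)$ is discrete is shakier than you present it: $\nabla 2$ does not have decidable equality internally (that would give weak excluded middle in $\rt(\pcaa)$), so Proposition~\ref{prop:discreteproperties}(1) does not apply, and deriving discreteness of arbitrary constant presheaves from the decidable-equality case plus disjoint coproducts does not go through; item~(4) of that proposition would suffice but requires the interval to be representable, which is not assumed. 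To be fair, the paper's own one-line proof has the same unaddressed discreteness issue for $\Delta\Omega$. Your fallback via subterminals and slicing over $\Omega_{\cat E}$ is a genuinely nice additional argument and arguably the cleanest way to avoid the discreteness worry, since subterminals of $1$ are automatically discrete as subobjects of a discrete object.
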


\begin{proof}
  By lemma \ref{lem:mccartyhasnegnegunit}, any McCarty universe
  $\mc(\pcaa)$ has a weakly $\neg \neg$-stable unit. Hence any
  homotopical Hofmann-Streicher universe on $\mc(\pcaa)$ has a
  pointwise weakly $\neg \neg$-stable unit by lemma
  \ref{lem:negnegunittopointwise}. We deduce by lemma
  \ref{lem:negnegunittocof} that for every monomorphism $m$ in
  $\rt(\pcaa)$, if $\Delta(m)$ is a cofibration then $m$ is
  $\neg \neg$-stable. However, by the assumption that path types are
  identity types and theorem \ref{thm:allmonoscofs} all monomorphisms
  are cofibrations. Hence all monomorphisms in $\rt(\pcaa)$ are
  $\neg \neg$-stable, including the subobject classifier
  $1 \to \Omega$, and so we deduce the law of excluded middle,
  contradicting lemma \ref{lem:rtnotboolean}.
\end{proof}

We observe that all of the results for internal presheaves apply in
particular to the degenerate case where the internal category is
trivial. In this case, for instance, pointwise $\neg \neg$-separated
is the same as $\neg \neg$-separated, whereas in general it is usually
weaker. Also note that in this case the Hofmann-Streicher universe on
$V$ is $V$ itself. We will apply this to two realizability toposes in
particular: the effective topos and the Kleene-Vesley topos. These
were studied from a homotopical point of view respectively by Van den
Berg and Frumin in \cite{vdbergfrumin} and the author in \cite[Section
8.2]{swanwtypered}. In both cases our argument depends only on the
choice of interval object and is independent of the choice of
cofibrations, as long as they satisfy our general conditions. In
particular for these results we don't need to assume path types are
identity types.

Recall (from \cite{vdbergfrumin}) that we can define an interval
object in the effective topos on $\nabla 2$, the uniform object with
$2$ elements.

\begin{theorem}
  Suppose we are given a class of cofibrations in the effective topos,
  that together with the interval object $\nabla 2$ satisfies our
  general conditions. Then there is no univalent homotopical universe
  on a McCarty universe, and no $\neg \neg$-separated univalent
  universe closed under propositional truncation.
\end{theorem}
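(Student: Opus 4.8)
The plan is to isolate the only place where lemma~\ref{lem:pathareidtollpo} used the hypothesis ``path types are identity types'': there it was invoked, via lemma~\ref{lem:constiscof}, solely to produce a few cofibrations. I claim that for the specific interval $\nabla 2$ these cofibrations are available for free, no matter which admissible class of cofibrations we are handed. The point is that in the effective topos $\nabla 2$ is (isomorphic to) the object $\Omega_{\neg\neg}$ of $\neg\neg$-closed truth values: the $\neg\neg$-sheaf subtopos of $\eff$ is $\mathbf{Set}$, so its subobject classifier, which is $\Omega_{\neg\neg}$, is $\nabla 2$, and under this identification the two endpoints $\delta_0,\delta_1\colon 1\to\nabla 2$ become the maps $\top$ and $\bot$. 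Consequently, pulling $\delta_0$ back along the characteristic map of a $\neg\neg$-closed subobject exhibits that subobject as a pullback of $\delta_0$; since $\delta_0$ is a cofibration and cofibrations are closed under pullback, \emph{every $\neg\neg$-closed monomorphism in $\eff$ is a cofibration}. This is the substitute for lemma~\ref{lem:constiscof}, and it uses the interval rather than the assumption that path types are identity types.

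With that in hand I would run the argument of lemma~\ref{lem:pathareidtollpo} essentially verbatim, taking $\cat{E}=\catc=\eff$ and $\Delta$ the identity. Let $B\rightarrowtail 2^{\nat}$ be the subobject of sequences taking the value $1$ at most once, and let $A_0,A_1\rightarrowtail B$ be the subobjects of sequences vanishing on all even, respectively all odd, arguments. Each of these subobjects is cut out by a $\neg\neg$-closed formula (a universally quantified conjunction of decidable equalities), so all three are $\neg\neg$-closed monomorphisms and hence cofibrations by the previous paragraph. Since $\nabla 2$ is connected and $2$ has decidable equality, $2$ is discrete by proposition~\ref{prop:discreteproperties}, hence so is $2^{\nat}$ and hence so are $B$, $A_0$, $A_1$. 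Finally, exactly as in lemma~\ref{lem:pathareidtollpo}, $2^{\nat}$ is $U$-small (since $\nat$ is $U$-small, which is automatic when the universe is built on a McCarty universe and belongs to the standing hypotheses otherwise), and using that the universe is closed under $\Sigma$, $\Pi$ and path types, together with the fact that the extensional and intensional readings coincide on discrete objects, $B$, $A_0$ and $A_1$ are $U$-small.

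The two clauses now diverge only in which result of Section~\ref{sec:cofibr-univ} is applied to the union $m\colon A_0\cup A_1\to B$. For a $\neg\neg$-separated univalent universe closed under propositional truncation, lemma~\ref{lem:vsephs} applies directly (pointwise $\neg\neg$-separation is just $\neg\neg$-separation in this degenerate case) and shows $m$ is $\neg\neg$-stable. For a univalent homotopical universe on a McCarty universe $\mc(\pcaa)$, lemma~\ref{lem:mccartyhasnegnegunit} supplies a weakly $\neg\neg$-stable unit, lemma~\ref{lem:negnegunittopointwise} promotes it to a pointwise weakly $\neg\neg$-stable unit, contractibility representations are obtained as in the remark following their definition (using that $\eff$ has exact quotients and a univalent universe), and lemma~\ref{lem:negnegunittocof} again shows $m$ is $\neg\neg$-stable. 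In either case, $A_0\cup A_1$ is $\neg\neg$-dense in $B$ (classically, a sequence with at most one $1$ vanishes on the evens or on the odds), so $\neg\neg$-stability forces $A_0\cup A_1=B$, which is exactly the statement $\llpo$. But $\llpo$ fails in the effective topos — for instance it contradicts the conjunction of Church's thesis and countable choice, both of which hold in $\eff$ — so no such universe can exist.

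The one genuinely non-routine step is the first paragraph: recognising $\nabla 2$ as $\Omega_{\neg\neg}$ and thereby forcing $\neg\neg$-closed monomorphisms to be cofibrations independently of the chosen cofibration class. After that the argument is a recombination of lemma~\ref{lem:pathareidtollpo} with the lemmas of Section~\ref{sec:cofibr-univ}, and the only remaining point needing care is verifying the contractibility-representation hypothesis in the McCarty case, which is handled exactly as in the preceding theorem on internal presheaves in realizability toposes.
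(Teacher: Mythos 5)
Your proof is correct and follows essentially the same route as the paper's: the paper's one-line key observation that ``any map $1 \to 2^\nat$ can be viewed as a pullback of an endpoint inclusion $1 \to \nabla 2$'' is exactly your identification $\nabla 2 \cong \Omega_{\neg\neg}$, so that $\neg\neg$-closed monomorphisms are pullbacks of the endpoint cofibration and one can bypass lemma~\ref{lem:constiscof} (hence the ``path types are identity types'' hypothesis). Beyond that you unfold ``the same argument as in theorem~\ref{thm:klassembliesnopathid}'' in more detail than the paper does, correctly splitting the two clauses between lemma~\ref{lem:vsephs} and lemma~\ref{lem:negnegunittocof}, but the approach is the same.
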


\begin{proof}
  Note that any map $1 \to 2^\nat$ can be viewed as a pullback of an
  endpoint inclusion $1 \to \nabla 2$. We can therefore use lemma
  \ref{lem:negnegunittocof} together with the same argument as in
  theorem \ref{thm:klassembliesnopathid}.
\end{proof}

We recall that countably based $T_0$-spaces embed into the function
realizabilty topos, as shown by Bauer \cite{bauereqsptte}. The
subcategory $\kv$ consists of maps that are both computable and
continuous and hence one can view the usual topological interval as an
interval object in $\kv$. It is straightforward to find a connection
structure for the interval using the usual topological definitions. It
is currently unclear what the best choice of cofibration for $\kv$ is,
but the theorem below applies in any case, as long as our general
conditions are satisfied. We note that there is at least one non
trivial example given by taking all monomorphisms to be cofibrations.

\begin{theorem}
  Suppose we are given a class of cofibrations in the Kleene-Vesley
  topos, that together with the topological interval object $[0, 1]$
  satisfies our general conditions. Then there is no univalent
  universe on a McCarty universe, and no $\neg \neg$-separated
  univalent universe closed under propositional truncation.
\end{theorem}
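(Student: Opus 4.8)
The plan is to transcribe the proof of the preceding theorem (for the effective topos) essentially verbatim, with the topological interval $[0,1]$ in place of $\nabla 2$ and $\kltwo$ in place of $\klone$. Both assertions — that there is no univalent universe on a McCarty universe, and no $\neg\neg$-separated univalent universe closed under propositional truncation — will be obtained by showing that the hypothesized data forces $\llpo$ to hold in the internal logic of the Kleene-Vesley topos, which contradicts the well-known failure of $\llpo$ in realizability over $\kltwo$.

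The only ingredient not already available is the fact that the constant sequence $\lambda n.0 \colon 1 \to 2^\nat$ is a cofibration; in lemma \ref{lem:pathareidtollpo} this came via lemma \ref{lem:constiscof} from the assumption that path types are identity types, which we do not wish to use. Instead I would exhibit $\lambda n.0 \colon 1 \to 2^\nat$ as a pullback of the endpoint inclusion $\delta_0 \colon 1 \to [0,1]$ along the binary-expansion map $b \colon 2^\nat \to [0,1]$, $b(\alpha) = \sum_{n} \alpha(n) 2^{-n-1}$. This map is continuous and computable, hence a morphism of the Kleene-Vesley topos, and internally $b(\alpha) = 0$ holds if and only if $\alpha(n) = 0$ for every $n$; so the fibre of $\delta_0$ along $b$ is precisely the constant sequence, and since $\delta_0$ is a cofibration and cofibrations are stable under pullback, so is $\lambda n.0 \colon 1 \to 2^\nat$. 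One also notes that $[0,1]$ is connected in the internal sense of the paper, since by the topological connectedness of $[0,1]$ every morphism $[0,1] \to 2$ is constant, so that $2$, $\nat$ and $2^\nat$ are discrete. The one point needing a little care — though nothing deep — is to make precise that the internal $2^\nat$ of the topos carries the Cantor-space structure, so that $b$ really is a morphism with $b^{-1}(\{0\})$ a single point.

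With this in hand I would run the argument of lemma \ref{lem:pathareidtollpo}, working in the Kleene-Vesley topos itself, so that the internal category is trivial, $\Delta$ is the identity, and ``pointwise'' means ``plain''. Put $B = \{\alpha \in 2^\nat \mid \alpha(n) = 1 \text{ for at most one } n\}$ and let $A_0, A_1 \subseteq B$ be the subobjects on which $\alpha$ vanishes on even, respectively odd, arguments. Each of $B$, $A_0$, $A_1$ is isomorphic to $\nat + 1$, hence discrete, and is $U$-small provided $\nat$ is $U$-small, which holds in all the cases at hand (a McCarty universe contains $\omega$). Moreover $A_0 \hookrightarrow B$ and $A_1 \hookrightarrow B$ are pullbacks of $\lambda n.0 \colon 1 \to 2^\nat$, hence cofibrations. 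For the $\neg\neg$-separated case I would apply lemma \ref{lem:vsephs} directly; for the McCarty case I would first use lemmas \ref{lem:mccartyhasnegnegunit} and \ref{lem:negnegunittopointwise} to obtain a pointwise weakly $\neg\neg$-stable unit and then apply lemma \ref{lem:negnegunittocof}, whose remaining hypotheses (closure under propositional truncation, and contractibility representations for small hpropositions) are available exactly as in the effective-topos case, e.g. via set quotients and univalence inside the McCarty model. Either way the union $m \colon A_0 \cup A_1 \to B$ is $\neg\neg$-stable; since $A_0 \cup A_1$ is also $\neg\neg$-dense in $B$, and $\llpo$ is precisely the assertion that $m$ is an isomorphism, $\llpo$ holds internally.

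Finally, $\llpo$ fails in realizability over $\kltwo$, by the same references used in theorem \ref{thm:klassembliesnopathid} (Richman's theorem together with countable choice and continuity, or \cite[Corollary 7.23]{rathjenswanlif}), which gives the desired contradiction. I do not expect a serious obstacle here: the whole argument is a faithful transcription of the effective-topos proof, and the genuinely new point is the identification of $\lambda n.0 \colon 1 \to 2^\nat$ with the fibre over an endpoint of the binary-expansion map, so the single thing to get exactly right is that this map is a bona fide morphism of the Kleene-Vesley topos with the stated one-point fibre.
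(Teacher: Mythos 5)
Your proposal is correct and takes essentially the same approach as the paper's own proof: exhibit the constant map $\lambda n.0 \colon 1 \to 2^\nat$ as a pullback of the endpoint inclusion $\delta_0 \colon 1 \to [0,1]$ along a computable continuous map $2^\nat \to [0,1]$, then run the $\llpo$ argument from lemma \ref{lem:pathareidtollpo} (via lemma \ref{lem:vsephs} for the $\neg\neg$-separated case and lemmas \ref{lem:mccartyhasnegnegunit}, \ref{lem:negnegunittopointwise}, \ref{lem:negnegunittocof} for the McCarty case) and appeal to the failure of $\llpo$ in $\kv$. Incidentally your formula $b(\alpha)=\sum_n \alpha(n)2^{-n-1}$ is the right one; the paper's printed $h(\alpha)=\sum_n 2^{-\alpha(n)}$ appears to be a typo, since that series diverges for every $\alpha$.

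One small point you should repair: $B$, $A_0$ and $A_1$ are \emph{not} internally isomorphic to $\nat + 1$ in $\kv$; such an isomorphism would let one decide whether a given $\alpha \in B$ ever takes the value $1$, which is an omniscience principle that fails in $\kv$. This does not damage the argument, but you need to reach discreteness and $U$-smallness differently: discreteness holds because $B$, $A_0$, $A_1$ are subobjects of the discrete object $2^\nat$ (which is discrete since $2$ has decidable equality and the interval is connected), and $U$-smallness follows, exactly as in the proof of lemma \ref{lem:pathareidtollpo}, by interpreting the defining formulas internally using $\Sigma$, $\Pi$ and path types and noting that the resulting fibrations are isomorphic to the intended subobjects because everything in sight is discrete.
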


\begin{proof}
  We first show that the map $1 \to 2^\nat$ defined to be constantly
  $\lambda n.0$ can be viewed as a pullback of the endpoint inclusion
  $\delta_0 \colon 1 \to [0, 1]$. We define a continuous map
  $h \colon 2^\nat \to [0, 1]$ by taking $h(\alpha)$ to be
  $\Sigma_{n = 0}^\infty \,2^{-\alpha(n)}$. This is evidently
  computable, and so does define a map in $\kv$, and it is
  straightforward to check that $\lambda n.0$ is the pullback of the
  $\delta_0$ along $h$.

  We can therefore
  use lemma \ref{lem:negnegunittocof} together with the same argument
  as in theorem \ref{thm:klassembliesnopathid}, and the observation
  that $\llpo$ fails in $\kv$.
\end{proof}

Both of the above results are specific to McCarty universes and
$\neg \neg$-separated universes. This of course leaves open
the possibility of constructing univalent universes in a completely
different way. One such possibility is to find a constructive version
of the definition by Shulman in \cite[Section 3]{shulmanelegantreedy}.

\section{Cofibrations in Homotopy Type Theory}
\label{sec:typetheory}

\newcommand{\fibre}[2]{\Sigma_{x : A} #1 = #2}

Although technically the results so far are specific to Orton-Pitts
models of type theory, they illustrate ideas that might turn out to be
more widely applicable. In particular the use of
$\neg \neg$-separation in presheaf assemblies matches up well with
definitional equality in type theory. In assemblies the
$\neg \neg$-stable propositions are those that are ``free of
computational information.'' That is, we don't need to be told a
particular realizer to show they are realized; we can guess a realizer
uniformly and if they are true then the realizer works. Meanwhile
equality in the underlying category is used to implement definitional
equality in type theory. This is something that should be ``free of
computational information,'' in the sense that we don't need a proof
term to exist to show two terms are definitionally equal. We are (or
perhaps should be) able to work out whether two terms are are equal or
not just by looking at the terms themselves without being given any
extra computational information.

We will use this idea to show that there is a purely type theoretic
version of the construction used in lemma \ref{lem:vsephs}. We will
use this to give interesting proofs of some minor results regarding
the syntactic category of homotopy type theory. This also suggests
that in future work it may be possible to obtain a syntactic version of
the main theorem (as will be discussed further in the conclusion).


We work over homotopy type theory as defined in
\cite{hottbook}. Recall that we can use the syntax of type theory to
define a category that is referred to as the \emph{syntactic category}
or \emph{classifying category}.  As shown by Lumsdaine in
\cite{lumsdainecofibtt}, building on the results of Gambino and Garner
\cite{gambinogarner}, one can define a notion of cofibration in the
syntactic category of type theory, and in the presence of suitable
higher inductive types one in fact gets a model structure. We define
display maps to be maps of the form $\Gamma.X \to \Gamma$. We then
define trivial cofibrations to be maps with the left lifting property
against display maps and cofibrations to be maps with the left lifting
property against those display maps $\Gamma.X \to \Gamma$ where there
is a term witnessing that $X$ is contractible.

Following the notation in \cite{hottbook} we write
$U$ for the universe of small types and $\ast$ for the unique element
of the unit type $1 : U$. We will follow the convention of using the
half adjoint definition of equivalence. Hence a term witnessing the
equivalence is a tuple containing maps in both directions as
components (in addition to the terms witnessing that the maps
are mutually inverse and the half adjoint coherence term).

In order to state the results that follow, we define the notion of
canonical map in the syntactic category type theory.
\begin{definition}
  We say a map $\sigma \colon \Delta \to \Gamma$ in the syntactic
  category is \emph{canonical} if for every map
  $\tau \colon \Xi \to \Gamma$ we can effectively decide whether or
  not there exists a map $\mu$ in the diagram below.
  \begin{equation*}
    \xymatrix{\Xi \ar@{.>}[d]_\mu \ar[dr]^\tau & \\
      \Delta \ar[r]_\sigma & \Gamma}
  \end{equation*}
  Moreover, if $\mu$ exists then it is unique and we can find it
  effectively.
\end{definition}

\begin{proposition}
  \label{prop:canontomonic}
  If $\sigma \colon \Delta \to \Gamma$ is canonical then it is a
  monomorphism.
\end{proposition}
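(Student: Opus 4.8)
The plan is to unwind the definition of \emph{canonical} and use only the uniqueness clause; the ``effectively decide'' and ``find effectively'' parts play no role here. To show that $\sigma \colon \Delta \to \Gamma$ is a monomorphism, I would take an arbitrary object $\Xi$ together with two maps $\mu_1, \mu_2 \colon \Xi \to \Delta$ satisfying $\sigma \circ \mu_1 = \sigma \circ \mu_2$, and then argue $\mu_1 = \mu_2$.

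Set $\tau := \sigma \circ \mu_1 \colon \Xi \to \Gamma$, which also equals $\sigma \circ \mu_2$. Applying the definition of canonical to this particular $\tau$, both $\mu_1$ and $\mu_2$ are valid choices of the dotted map $\mu$ in the triangle
\begin{equation*}
  \xymatrix{\Xi \ar@{.>}[d]_\mu \ar[dr]^\tau & \\
    \Delta \ar[r]_\sigma & \Gamma}
\end{equation*}
since each of them makes it commute. In particular $\tau$ does factor through $\sigma$, so we are in the case of the definition where the factorisation is asserted to be unique; hence $\mu_1 = \mu_2$. As $\Xi$, $\mu_1$, $\mu_2$ were arbitrary, $\sigma$ is a monomorphism.

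I do not expect any real obstacle: the proof is essentially a one-line consequence of the uniqueness part of canonicity. The only point that needs a moment's care is observing that the definition quantifies over \emph{all} $\tau \colon \Xi \to \Gamma$, and in particular over the one produced from a hypothetical pair of distinct factorisations, so the uniqueness clause can be invoked directly without any auxiliary construction.
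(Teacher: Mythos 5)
Your proof is correct and is essentially the paper's own argument: the paper's proof reads simply ``This follows from the uniqueness condition in the definition of canonical,'' and you have just unwound that one-line observation into the standard two-morphism check. No discrepancy.
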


\begin{proof}
  This follows from the uniqueness condition in the definition of
  canonical.
\end{proof}

We now give our syntactic version of lemma \ref{lem:vsephs}.
\begin{lemma}
  \label{lem:syntaxlem}
  Let $m \colon \Gamma.A \to \Gamma.B$ be a cofibration over
  $\Gamma$. Then there is a raw term $r$ whose only free variables
  either belong to $\Gamma$ or are equal to a fresh variable $y$ such
  that for any context $\Delta$, any $\sigma \colon \Delta \to \Gamma$
  and any terms $a$ and $b$ with $\Delta \vdash a : A[\sigma]$,
  $\Delta \vdash b : B[\sigma]$ and $\Delta \vdash m[\sigma][x/a]
  \equiv b$, we have the following.
  \begin{align}
    \Delta &\vdash r[\sigma][y/b] : \| \Sigma_{x : A[\sigma]} \, m =
             b \| \label{eq:syntaxlemvalid} \\
    \Delta &\vdash r[\sigma][y/b] \equiv | (a, \refl_{m[x/a]})
             | \label{eq:syntaxlemeq}
  \end{align}
\end{lemma}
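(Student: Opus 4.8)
The plan is to transport the homotopical argument of Lemma~\ref{lem:vsephs} into the syntactic setting by working entirely with raw terms and the universe $U$. The key observation is that in the syntactic category, the analogue of ``$U$ is pointwise $\neg\neg$-separated'' is a definitional equality phenomenon: a term is determined by inspecting its syntax, without being handed computational evidence. So where Lemma~\ref{lem:vsephs} used $\neg\neg$-separation to collapse $\pi_Y(j_c(b))$ to the unit type, here we will arrange that the ``$Y$ component'' of the relevant classifying term is \emph{definitionally} equal to $1 : U$, and read off the required term from the diagonal filler. Concretely, set $C := \Sigma_{x : A}\, m = y$ (thought of as indexed over $\Gamma.B$ via the fresh variable $y$) and let $f \colon \Gamma.C \to \Gamma.B$ be its display map; then $\|C\|$, as an hproposition over $\Gamma.B$, is a display map and in fact the codomain of a display map whose fibre is visibly contractible — wait, more precisely $f$ is an hproposition, so using that $m$ is a cofibration we will build a retraction of a trivial cofibration and extract a term.

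First I would use that $m \colon \Gamma.A \to \Gamma.B$ is a cofibration over $\Gamma$, i.e.\ it has the left lifting property against display maps with a term of contractibility of the fibre. I would set up the lifting problem whose right leg is the display map classified by the univalence-style family $\Sigma_{X : U}\Sigma_{Y : U}\,\hequiv(X,Y) \to U$ — but since we are now in syntax, this is cleaner: the top map $\alpha \colon \Gamma.A \to \Sigma_{X:U}\Sigma_{Y:U}\hequiv(X,Y)$ sends a point to $(\beta(m), 1, e)$ where $\beta \colon \Gamma.B \to U$ classifies $\|C\|$, the $Y$-coordinate is literally the unit type $1 : U$, and $e$ is an equivalence between $\beta(m)$ and $1$. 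The equivalence $e$ exists over $\Gamma.A$ because over $\Gamma.A$ the fibre of $\|C\|$ is inhabited (by $|(x, \refl_m)|$, using the hypothesis $m[x/a] \equiv b$) and an inhabited hproposition is contractible, hence equivalent to $1$. Because the $Y$-coordinate is \emph{constantly} $1$, the bottom map of the lifting problem against the univalence family is the constant map at $1 : U$, which is a display map with contractible fibre (the fibre being $\Sigma_{Y:U}\hequiv(1,Y)$, contractible by univalence, contracting to $Y = 1$). So the filler $j$ exists precisely because $m$ is a cofibration in the syntactic sense.

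Then I would extract the term $r$ from $j$. The filler $j \colon \Gamma.B \to \Sigma_{X:U}\Sigma_{Y:U}\hequiv(X,Y)$, composed with the lower triangle constraint, forces its $Y$-component $y_B \colon \Gamma.B \to U$ to be definitionally the constant $1 : U$ — this is the syntactic shadow of $\neg\neg$-separation of $U$: the commutativity of the lower triangle is a definitional equality in the syntactic category, so $y_B$ \emph{is} $1$, not merely propositionally. From $j$ we obtain over $\Gamma.B$ an equivalence between $\beta(y) = \|C\|_y$ and $1$, whence a term $\Gamma.B \vdash s : \|C\|_y$, i.e.\ the raw term $r$ with free variables in $\Gamma$ and the fresh $y$, validating \eqref{eq:syntaxlemvalid}. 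Substituting $y \mapsto b$ along $\sigma$ and using that over $\Gamma.A$ the filler agrees with $\alpha$ (upper triangle of the lifting square, again a definitional equality), one reads off \eqref{eq:syntaxlemeq}: the substituted term is definitionally $|(a, \refl_{m[x/a]})|$, because along the section given by $a$ the equivalence $e$ was built from precisely that element. The main obstacle, and the step that needs care, is verifying that the relevant triangles in the syntactic lifting problem commute \emph{definitionally} rather than merely propositionally, so that the $Y$-component of $j$ collapses strictly to $1$ and the final equation \eqref{eq:syntaxlemeq} is a genuine judgemental equality; this hinges on choosing the presentation of the half-adjoint equivalence $e$ and of $\iscontr$ so that the contraction of an inhabited hproposition reduces definitionally on the canonical inhabitant, and on tracking that the cofibration filler respects the strict commutativity of the square on the nose.
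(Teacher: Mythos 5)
The overall strategy you use is the paper's: build a lifting problem of the cofibration $m$ against the univalence-style display map $\Sigma_{X:U}\Sigma_{Y:U}\hequiv(X,Y) \to U$, with top map built from the canonical inhabitant $|(x,\refl_{m(x)})|$ of the truncation, and extract the term $r$ from the diagonal filler. But there is a genuine gap at the key step, and the reasoning around the lifting square is internally inconsistent.

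A diagonal filler $j$ is constrained by exactly two triangles. The lower triangle fixes the composite of $j$ with the \emph{right} leg over all of $\Gamma.B$; the upper triangle fixes $j$ only \emph{along} $m$, i.e.\ after reindexing by $m$ into context $\Gamma.A$. With the right leg $\pi_X$ and the bottom map $\beta$ classifying $\|C\|$ (which is what the paper uses, and what your square has to be for the square to commute), the lower triangle tells you the $X$-component of $j$ is $\beta(y)$ over all of $\Gamma.B$; the $Y$-component $D$ of $j$ is some \emph{new} type, about which the lower triangle says nothing. The upper triangle then gives $D[y/m] \equiv 1$, a judgemental equality valid only in context $\Gamma, x:A$. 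If instead you took $\pi_Y$ as the right leg with bottom map constantly $1$, the lower triangle would force the $Y$-component to be $1$, but then the $X$-component would be an undetermined type $C'$, equal to $\|C\|$ only along $m$. In no setup do both projections of $j$ collapse over all of $\Gamma.B$, and your text asserts both (``forces its $Y$-component $y_B$ to be definitionally the constant $1$'' and ``we obtain over $\Gamma.B$ an equivalence between $\beta(y)=\|C\|_y$ and $1$''). You also mix up which projection is the right leg versus which gives the bottom map, so the square as written does not commute.

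The consequence is that your conclusion ``whence a term $\Gamma.B \vdash s : \|C\|_y$'' is false and strictly too strong: it would give a genuine section of the truncation over all of $\Gamma.B$, i.e.\ a proof that every $b$ is propositionally in the image of $m$, which is not derivable and not what the lemma claims. The lemma only asserts a \emph{raw} term $r$ that becomes a valid inhabitant of the truncation after a substitution $[\sigma][y/b]$ in which $b$ is definitionally in the image of $m$. The paper's proof respects this: it takes $r := e\,\ast$ where $e$ is the map component $D \to C$ of the filler, a raw term which only type-checks after substituting a $b$ with $m[\sigma][x/a] \equiv b$, because only then does the upper-triangle equation $D[y/m] \equiv 1$ propagate to $D[\sigma][y/b] \equiv 1$, making $\ast$ a valid argument. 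Your intuition about a ``syntactic shadow of $\neg\neg$-separation'' is appealing, but the technical mechanism is the upper triangle of the lifting square (a constraint along $m$ only), not any constraint over the whole of $\Gamma.B$, and the ``separation-like'' behaviour enters only once the definitional hypothesis $m[\sigma][x/a] \equiv b$ is available.
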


\begin{proof}
  Write $C$ for the type $\|\Sigma_{x : A} m = y \|$. Then we
  have the valid judgement $y : B \vdash C(y) : U$.  Note that if we
  reindex along $m$, we get the type $x : A \vdash C[y/m] : U$. In
  any case $C$ is an hproposition, and in context
  $x : A$ we can clearly construct an inhabitant
  $| (x, \refl_{m(x)}) |$ of $C[y/m(x)]$. Hence in context $x : A$,
  $C[y/m]$ is contractible and so we can construct a term witnessing
  that $C[y/m]$ is equivalent to the unit type $1 : U$. We take the
  component witnessing the map $1 \to C[y/m]$ to be $\lambda z.(x,
  \refl_{m(x)})$. We omit writing out the other components of the
  equivalence.

  Since $m$ is a cofibration by assumption, we can deduce by
  univalence that there is a type $D$ with
  $\Gamma, y : B \vdash D : U$ together with a term witnessing that
  $D$ is equivalent to $C$ in context $\Gamma, y :
  B$. Explicitly, we use a diagonal filler in the lifting problem
  below.
  \begin{equation*}
    \xymatrix@C=10em{ \Gamma.A \ar[r]^(0.35){(C, \lambda x.1, \lambda
        z.|(x, \refl_{m(x)})|)} \ar[d] &
      \Sigma_{C : U} \Sigma_{D : U} \, \hequiv(D, C) \ar[d] \\
      \Gamma.B \ar[r]_C \ar@{.>}[ur] & U}
  \end{equation*}
  We write
  $e$ for the component of the equivalence that witnesses the map
  $D \to C$ and omit writing the other components of the
  equivalence. We will take $r$ to be $e \ast$. The upper
  triangle law for the diagonal filler tells us that when we reindex
  along $m$ we get the following definitional equalities.
  \begin{align*}
    \Gamma, x : A &\vdash D[y/m] \equiv 1 : U \\
    \Gamma, x : A &\vdash e[y/m] \equiv \lambda z.|(x, \refl_{m(x)})| : 1 \to
  C[y/m]
  \end{align*}

  Substituting in $\sigma$ and $a$ we deduce the following.
  \begin{align}
    \Delta &\vdash D[\sigma][y/m[\sigma][x/a]] \equiv 1 \label{eq:4}\\
    \Delta &\vdash e[\sigma][y/m[\sigma][x/a]] \equiv \lambda z.|(a,
             \refl_{m[x/a]})| \label{eq:5}
  \end{align}
  Using $\Delta \vdash m[\sigma][x/a] \equiv b$ we deduce the following.
  \begin{align}
    \Delta &\vdash D[\sigma][y/m[\sigma][x/a]] \equiv
             D[\sigma][y/b] \label{eq:1} \\
    \Delta &\vdash e[\sigma][y/m[\sigma][x/a]] \equiv
             e[\sigma][y/b] \label{eq:2}
  \end{align}
  We then combine \eqref{eq:4} with \eqref{eq:1} and \eqref{eq:5} with
  \eqref{eq:2} to get the following.
  \begin{align}
    \Delta &\vdash D[\sigma][y/b] \equiv 1 \label{eq:6}\\
    \Delta &\vdash e[\sigma][y/b] \equiv \lambda z.|(a,
             \refl_{m[x/a]})| \label{eq:7}
  \end{align}

  From \eqref{eq:6} we derive $\Delta \vdash \ast : D[\sigma][y/b]$,
  and so we can derive the following judgement.
  \begin{equation*}
    \Delta \vdash e[\sigma][y/b]\ast : C[\sigma][y/b]
  \end{equation*}
  But this is the same as \eqref{eq:syntaxlemvalid}.

  From \eqref{eq:7} we can derive the following.
  \begin{equation*}
    \Delta \vdash e[\sigma][y/b]\ast \equiv |(a, \refl_{m[x/a]})| : C[y/b]
  \end{equation*}
  But this is the same as \eqref{eq:syntaxlemeq}.
\end{proof}

\begin{lemma}
  \label{lem:trunccanontocanon}
  Suppose that we are given types $A$ and $B$ in a context $\Gamma$
  and a term $\Gamma, x : A \vdash m : B$. Suppose that the truncation
  map $(\Gamma, y : B, \fibre{m}{y}) \to (\Gamma, y : B, \|
  \fibre{m}{y} \|)$ is canonical and that we have decidable type
  checking. Then the map $(1_\Gamma, m) \colon \Gamma.A \to \Gamma.B$
  is canonical.
\end{lemma}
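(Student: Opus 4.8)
The plan is to use Lemma~\ref{lem:syntaxlem} applied to the cofibration $(1_\Gamma, m) \colon \Gamma.A \to \Gamma.B$ itself, together with the assumed canonicity of the truncation map, in order to produce the effective decision procedure and uniqueness required to witness that $(1_\Gamma, m)$ is canonical. First I would fix an arbitrary test map $\tau \colon \Xi \to \Gamma.B$; unwinding the definition of $\Gamma.B$, such a $\tau$ amounts to a map $\sigma \colon \Xi \to \Gamma$ together with a term $\Xi \vdash b : B[\sigma]$. A lift $\mu \colon \Xi \to \Gamma.A$ over $\tau$ is then, by definition of $\Gamma.A$, exactly a term $\Xi \vdash a : A[\sigma]$ such that $\Xi \vdash m[\sigma][x/a] \equiv b$. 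So the task reduces to: effectively decide whether such an $a$ exists, produce it when it does, and show it is unique.

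\emph{Existence and construction.} By Lemma~\ref{lem:syntaxlem} we have a raw term $r$ such that, whenever a suitable $a$ does exist, $\Xi \vdash r[\sigma][y/b] : \| \Sigma_{x : A[\sigma]} m = b \|$ and moreover $\Xi \vdash r[\sigma][y/b] \equiv |(a, \refl_{m[x/a]})|$. The point is that $r[\sigma][y/b]$ is a concrete raw term that can be \emph{written down} purely from $\sigma$ and $b$, without knowing $a$. Now I would apply the hypothesis that the truncation map $(\Gamma, y:B, \fibre{m}{y}) \to (\Gamma, y:B, \|\fibre{m}{y}\|)$ is canonical: precomposing with $\tau' \colon \Xi \to (\Gamma, y:B, \|\fibre{m}{y}\|)$ given by $(\sigma, b, r[\sigma][y/b])$ — which is a valid map whenever $r[\sigma][y/b]$ actually typechecks as an element of the truncation — canonicity tells us we can effectively decide whether it lifts through the truncation map, i.e. whether there is a term $\Xi \vdash p : \fibre{m}{y}[\sigma][y/b] = \Sigma_{x:A[\sigma]} m[\sigma] = b$ lying over $r[\sigma][y/b]$, and if so find it; from the first projection of $p$ we extract a candidate $a$. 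Using decidable type checking we then verify whether $\Xi \vdash m[\sigma][x/a] \equiv b$ holds. Combining: first use decidable typechecking to test whether $r[\sigma][y/b]$ is a well-formed term of the truncation type at all (if the context/term is ill-formed there is certainly no lift), then run the canonicity decision procedure for the truncation map, then typecheck the resulting $a$. If every stage succeeds we have our lift $\mu$; if the candidate $a$ exists by Lemma~\ref{lem:syntaxlem} whenever any lift exists, so a negative answer at any stage certifies that no lift exists.

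\emph{Uniqueness.} This is essentially the observation already made for the $\neg\neg$-separated case in spirit: if $a$ and $a'$ both satisfy $m[\sigma][x/a] \equiv b \equiv m[\sigma][x/a']$, then Lemma~\ref{lem:syntaxlem} gives $\Xi \vdash r[\sigma][y/b] \equiv |(a, \refl_{m[x/a]})|$ and $\Xi \vdash r[\sigma][y/b] \equiv |(a', \refl_{m[x/a']})|$, so $|(a,\refl)| \equiv |(a',\refl)|$ definitionally inside $\|\Sigma_{x:A[\sigma]} m = b\|$; but more directly, since $m$ is a monomorphism in the sense relevant here — a cofibration and hence in particular (being a display map of a subsingleton-like type) a mono — $a$ and $a'$ must be definitionally equal. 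Actually the cleanest route is: a cofibration $m$ has the left lifting property against display maps over contractible types, and in particular $(1_\Gamma, m)$ is itself a split mono / mono, so any two fillers of the same square coincide; this is exactly Proposition~\ref{prop:canontomonic} read backwards, but here we just need that the term $a$ is determined by $b$ up to definitional equality, which follows because $\Gamma.A \to \Gamma.B$ being a cofibration forces the comparison map into the fibre to be an equivalence of subsingleton type, pinning $a$ down.

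The main obstacle I anticipate is the bookkeeping around \emph{raw} versus \emph{well-typed} terms: Lemma~\ref{lem:syntaxlem} only guarantees $r[\sigma][y/b]$ is a valid term of the truncation type \emph{under the assumption} that a suitable $a$ exists, so I cannot naively feed it to the canonicity procedure for the truncation map without first knowing what I am trying to decide. The fix is to lean on decidable type checking: form the raw term $r[\sigma][y/b]$ unconditionally, then decide whether the judgement $\Xi \vdash r[\sigma][y/b] : \|\Sigma_{x:A[\sigma]} m = b\|$ is derivable; a derivation exists iff (by Lemma~\ref{lem:syntaxlem} and the fact that the truncation is inhabited exactly when the fibre is) a lift $a$ exists, so this single decidable check already settles existence, and then one recovers $a$ by applying canonicity of the truncation map to the now-legitimate map $\tau'$ and typechecking the first projection of the resulting lift. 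Threading these decidability hypotheses together carefully, rather than any deep mathematical content, is where the work lies.
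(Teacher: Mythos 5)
Your existence/construction argument matches the paper's proof closely: you produce the raw term $r$ from Lemma~\ref{lem:syntaxlem}, typecheck $r[\sigma][y/b]$ against the truncation type, invoke canonicity of the truncation map to extract a term $c$ in the fibre, and then do a final definitional-equality check to make sure $\pi_0 c$ is actually a lift. Your final check ($\Xi \vdash m[\sigma][x/\pi_0 c] \equiv b$) is stated slightly differently from the paper's ($\Xi \vdash c \equiv (\pi_0 c, \refl_{m[\sigma][x/\pi_0 c]})$), but both are sound once you chase through the uniqueness clause of canonicity for the truncation map, so this is fine. You also correctly note that the lemma should assume $(1_\Gamma, m)$ is a cofibration (needed to apply Lemma~\ref{lem:syntaxlem}); the paper's statement omits this, though all its applications supply it.

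The genuine gap is in your uniqueness argument. You start the right argument --- from $m[\sigma][x/a] \equiv b \equiv m[\sigma][x/a']$, Lemma~\ref{lem:syntaxlem} gives $r[\sigma][y/b] \equiv |(a, \refl)|$ and $r[\sigma][y/b] \equiv |(a', \refl)|$ --- but then you abandon it at exactly the point where you should invoke the uniqueness clause in the hypothesis that the truncation map is canonical: a lift of $r[\sigma][y/b]$ through $|-|$ is unique up to definitional equality, so $(a, \refl_{m[\sigma][x/a]}) \equiv (a', \refl_{m[\sigma][x/a']})$, and projecting gives $a \equiv a'$. The ``more direct'' and ``cleanest'' routes you offer instead are not correct. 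Being a cofibration (left lifting property against display maps with contractible fibre) does \emph{not} imply being a monomorphism in the syntactic category, and it certainly does not imply being a split mono; establishing that certain cofibrations are monic, hence canonical, is precisely the nontrivial content of this section, so assuming it here is circular. The map $(1_\Gamma, m)$ is also not a display map, so ``display map of a subsingleton-like type'' is a category error, and invoking Proposition~\ref{prop:canontomonic} ``read backwards'' would be asserting its converse, which the paper neither proves nor needs. Finally, ``the comparison map into the fibre is an equivalence of subsingleton type'' pins $a$ down only up to a path, not up to definitional equality, which is what canonicity requires. Drop that digression and finish the argument you started via canonicity of the truncation map; with that, your proof is a correct rendition of the paper's.
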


\begin{proof}
  Suppose we are given a map $\tau \colon \Xi \to \Gamma.B$. Note that
  we can split up $\tau$ as $(\sigma, b)$ where
  $\sigma \colon \Xi \to \Gamma$ and $\Xi \vdash b : B[\sigma]$.

  Let $r$ be a raw term as in the statement of lemma
  \ref{lem:syntaxlem}. We first use decidable type checking to decide
  whether the following judgement is valid.
  \begin{equation*}
    \Xi \vdash r[\sigma][y/b] : \| \fibre{m[\sigma]}{b} \|
  \end{equation*}
  If it is not valid we say there is no such term $a$ satisfying the
  condition. If it is valid, we continue.
  
  Now using the assumption that the truncation map is canonical, we
  can effectively decide whether or not there exists a term $c$
  satisfying the following.
  \begin{align*}
    \Xi &\vdash c : \fibre{m[\sigma]}{b} \\
    \Xi &\vdash r[\sigma][y/b] \equiv |c|
  \end{align*}
  If the check returns false we say there is no such term $a$
  satisfying the condition. If it is valid, then we can effectively
  find such a term $c$, and we continue.

  We next use type checking to decide if the following judgement is
  valid.
  \begin{equation*}
    \Xi \vdash c \equiv (\pi_0 c, \refl_{m[\sigma][x/\pi_0 c]})
  \end{equation*}
  If so, then we have found a suitable term taking $a := \pi_0 c$,
  otherwise we say there is no such term.

  We now need to show that the term $a$ is unique and that if any of
  the three checks above returns false then there really is no such
  term $a$. It suffices for both to show that if $a$ is any such term
  then all the checks above return true, and that for the resulting
  term $c$ we have $\Xi \vdash a \equiv \pi_0 c$. So, let $a$ be
  any term such that $\Xi \vdash a : A$ and
  $\Xi \vdash m[\sigma][x/a] \equiv b$.

  Lemma \ref{lem:syntaxlem} tells us that we have the judgements
  below.
  \begin{align*}
    \Xi &\vdash r[\sigma][y/b] : \| \fibre{m[\sigma]}{b} \|  \\
    \Xi &\vdash r[\sigma][y/b] \equiv | (a, \refl_{m[\sigma][x/a]})
         |
  \end{align*}
  Then the first judgement tells us that the first type check must
  have returned true. Next, the two judgements together with
  canonicity for truncation tell us that the second test must have
  returned true, and that for the resulting term $c$ we have
  $\Xi \vdash c \equiv (a, \refl_{m[\sigma][x/a]})$.

  We can now deduce that $\Xi \vdash \pi_0 c \equiv a$, and that
  the final type check must have also returned true, as required.
\end{proof}

We will now use the lemma to prove a couple of minor results about
canonical maps. In each case, the result itself isn't so interesting
so much as that we can prove them without using strong normalisation,
or something similar.

The first result is analogous to the kind of construction that was
very useful when we were working semantically. Unfortunately,
it is currently unclear if there are any new non trivial examples of
applications when working syntactically. However, we do have the minor
observation that coproduct inclusions are monic, in the sense that if
$\Gamma \vdash a, a' : A$ and $\Gamma \vdash \mathtt{inl}(a) \equiv
\mathtt{inl}(a') : A + B$ then $\Gamma \vdash a \equiv a' : A$.

\begin{theorem}
  Suppose that we have decidable type checking. Suppose that $A$ and
  $B$ are types in context $\Gamma$, $m$ is a term
  $\Gamma, x : A \vdash m : B$, that we are given a term witnessing
  that $m$ is an embedding and that
  $(1_\Gamma, m) \colon \Gamma.A \to \Gamma.B$ is a cofibration. Then
  $(1_\Gamma, m)$ is canonical.
\end{theorem}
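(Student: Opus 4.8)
The plan is to reduce to Lemma~\ref{lem:trunccanontocanon}. We are assuming decidable type checking, and $(1_\Gamma, m)$ is a cofibration (which is what Lemma~\ref{lem:syntaxlem}, used inside the proof of Lemma~\ref{lem:trunccanontocanon}, requires), so it suffices to show that the truncation map $\nu \colon (\Gamma, y : B, \fibre{m}{y}) \to (\Gamma, y : B, \| \fibre{m}{y} \|)$ is canonical. The one place the embedding hypothesis is used is to supply, via the standard correspondence between embeddings and fibrewise mere propositions in \cite[Chapter 4]{hottbook}, a term in context $\Gamma, y : B$ witnessing $\isprop(\fibre{m}{y})$.

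To see that $\nu$ is canonical, take an arbitrary $\tau \colon \Xi \to (\Gamma, y : B, \| \fibre{m}{y} \|)$ and split it as $(\sigma, b, p)$ with $\sigma \colon \Xi \to \Gamma$, $\Xi \vdash b : B[\sigma]$ and $\Xi \vdash p : \| \fibre{m[\sigma]}{b} \|$; a lift of $\tau$ along $\nu$ amounts to a term $\Xi \vdash c : \fibre{m[\sigma]}{b}$ with $\Xi \vdash |c| \equiv p$. Substituting into the witness above, $\fibre{m[\sigma]}{b}$ is a mere proposition, so the recursion principle for propositional truncation, applied to the identity function, produces a term $g \colon \| \fibre{m[\sigma]}{b} \| \to \fibre{m[\sigma]}{b}$ satisfying the definitional equation $\Xi \vdash g(|z|) \equiv z$. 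I would then set $c := g(p)$, which is written down effectively from $\tau$, and use decidable type checking to decide whether $\Xi \vdash |c| \equiv p$. If it holds, $(\sigma, b, c)$ is the required lift; if it fails, no lift exists, since any lift $c'$ would satisfy $\Xi \vdash c \equiv g(p) \equiv g(|c'|) \equiv c'$ and hence $\Xi \vdash |c| \equiv |c'| \equiv p$. The same chain of equalities shows the lift is unique when it exists. Thus $\nu$ is canonical, and Lemma~\ref{lem:trunccanontocanon} gives that $(1_\Gamma, m)$ is canonical.

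The main obstacle is getting the strict computation rule $g(|z|) \equiv z$ to hold: this is what lets us recover a preimage on the nose rather than only up to a path, and it works only because the point constructor $|-|$ of propositional truncation satisfies a judgemental $\beta$-rule --- which is precisely why $g$ must be built by recursion on the identity map rather than as some arbitrary homotopy inverse of $|-|$. The other ingredients --- extracting $\isprop(\fibre{m}{y})$ from a proof that $m$ is an embedding, and using decidable type checking to settle the relevant definitional equalities --- are routine in the \cite{hottbook} setting and already appear in the proof of Lemma~\ref{lem:trunccanontocanon}.
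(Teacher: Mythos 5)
Your proof is correct and follows essentially the same route as the paper's: reduce to Lemma~\ref{lem:trunccanontocanon} by showing the truncation map is canonical, using the embedding hypothesis to deduce that $\fibre{m}{y}$ is a mere proposition so that truncation-recursion on the identity yields a retraction of $|-|$. The paper states this as ``has a retraction, and hence is canonical''; your version makes explicit that it is the judgemental $\beta$-rule for $|-|$ that makes the retraction usable for deciding lifts, which is exactly the content the paper leaves implicit.
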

\begin{proof}
  Since $m$ is an embedding, by definition the type
  $\Sigma_{x : A}\,m = y$ is an hproposition in context
  $\Gamma, y : B$. It easily follows that the truncation map
  $\Gamma.B.\Sigma_{x : A}\,m = y \to \Gamma.B.\|\Sigma_{x : A}\,m =
  y\|$ has a retraction, and hence is canonical. We can now apply
  lemma \ref{lem:trunccanontocanon}.
\end{proof}

\begin{theorem}
  \label{thm:trunccanontocanonclosed}
  Suppose that for all closed types $A$ and $B$ and maps between
  singleton contexts $m \colon (A) \to (B)$, the truncation map
  $(y : B, \fibre{m}{y}) \to (y : B, \| \fibre{m}{y} \|)$ is
  canonical, and that we have decidable type checking.

  Then for any closed types $A$ and $B$, any cofibration
  $m \colon (A) \to (B)$ is canonical.
\end{theorem}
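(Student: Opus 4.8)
The plan is to deduce the statement directly from Lemma \ref{lem:trunccanontocanon} by instantiating the ambient context there to be empty. The one preliminary observation is that, because $A$ and $B$ are closed, a context morphism $m \colon (A) \to (B)$ between singleton contexts carries exactly the same data as a term $x : A \vdash m : B$: such a morphism records, for the unique variable $y$ of $(B)$, a term of type $B$ in the context $x : A$, and since $B$ does not depend on any variable there is no nontrivial substitution to track. In particular every map $(A) \to (B)$ is of the form $(1_{()}, m)$, and $(1_{()}, m)$ is literally $m$ itself.

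With this identification, I would apply Lemma \ref{lem:trunccanontocanon} with $\Gamma$ the empty context, with $A$ and $B$ the given closed types, and with $m$ the given cofibration viewed as a term $x : A \vdash m : B$. Its hypotheses all hold: decidable type checking is assumed outright; the canonicity of the truncation map $(\Gamma, y : B, \fibre{m}{y}) \to (\Gamma, y : B, \| \fibre{m}{y} \|)$ specialises, at $\Gamma = ()$, to the canonicity of $(y : B, \fibre{m}{y}) \to (y : B, \| \fibre{m}{y} \|)$, which is exactly the instance of the theorem's blanket hypothesis attached to this $m$; and $m$ is a cofibration, which is what makes the raw term $r$ of Lemma \ref{lem:syntaxlem} available for the argument inside Lemma \ref{lem:trunccanontocanon}. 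The conclusion of Lemma \ref{lem:trunccanontocanon} is that $(1_\Gamma, m) \colon \Gamma.A \to \Gamma.B$ is canonical, which under the identification above is precisely the assertion that $m \colon (A) \to (B)$ is canonical.

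I do not expect any real obstacle here, since the mathematical content lives entirely in Lemmas \ref{lem:syntaxlem} and \ref{lem:trunccanontocanon}; this theorem merely records their special case $\Gamma = ()$ in a form whose hypothesis --- ``truncation maps between closed singleton contexts are canonical'' --- is the one one would hope to establish by a direct syntactic analysis of the truncation type former, without appeal to strong normalisation. The only points requiring any care are the routine identification of morphisms of singleton contexts with terms when the codomain type is closed, and keeping careful track of which hypotheses of Lemma \ref{lem:trunccanontocanon} are genuinely used --- in particular that the cofibration assumption is still needed, via Lemma \ref{lem:syntaxlem}.
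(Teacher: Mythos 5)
Your proposal is correct and follows exactly the same route as the paper: theorem \ref{thm:trunccanontocanonclosed} is proved by specialising lemma \ref{lem:trunccanontocanon} to the empty ambient context $\Gamma = ()$, and your preliminary remark identifying maps of singleton contexts over closed types with the corresponding terms is just the routine unfolding needed to see that the hypotheses and conclusion of the lemma specialise to those of the theorem.
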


\begin{proof}
  This is a special case of lemma \ref{lem:trunccanontocanon} where we
  take $\Gamma$ to be empty.
\end{proof}

Similar results can be obtained from a well known theorem by Nicolai
Kraus \cite[Section 8.4]{kectanonex}. By a similar (but easier)
argument to theorem \ref{lem:trunccanontocanon} one can use Kraus'
result to show that if $A$ is an inhabited transitive type and
decidable type checking holds, then the truncation map
$\Gamma.A \to \Gamma.\|A\|$ is canonical. It is straightforward to
check that truncation maps are cofibrations, but it is also an
instance of a general principle by Lumsdaine \cite{lumsdainecofibtt},
stating that point constructors of higher inductive types are always
cofibrations.

In fact the proof Kraus used applies not just to truncation maps, but
to any cofibration, as long as the domain satisfies the requirement of
having terms witnessing it is transitive and inhabited. Hence a
cofibration $m \colon \Gamma.A \to \Gamma.B$ is canonical whenever $A$
is transitive and inhabited, and in fact it follows that a cofibration
is monic whenever $A$ is transitive (but not necessarily
inhabited). For example, when $A$ is transitive and $R$ is a binary
relation on $A$, the set quotient map $A \to A/R$ is always monic (but
obviously not always ``homotopy monic'').


\section{Conclusion}
\label{sec:conclusion}

\subsection{Towards a Proof that Path Types are not Identity Types}

The results here and in particular section \ref{sec:typetheory},
suggest that similar results might hold in general in type theory
with univalence. Roughly speaking I expect that in any type theory
with a notion of path type that behaves similar to an exponential, it is
impossible to simultaneously satisfy all three of the following
requirements.
\begin{enumerate}
\item \label{condspathid}Path types are definitionally isomorphic to
  identity types.
\item \label{condshott}Univalence and all the higher inductive
  types defined in \cite{hottbook} are derivable.
\item \label{condscomputes}The type theory has good computational
  properties such as strong normalisation, decidable type checking and
  canonicity.
\end{enumerate}

Unfortunately, even formulating this statement precisely is a
difficult task. For example, to even give a good definition of what a
type theory is in general remains an area of active research. For this
reason this doesn't deserve to be called a ``conjecture;'' sometimes
the term ``hypothesis'' is used for such statements.

To be clear, even if the hypothesis is correct, it allows for
consistent type
theories where any two of the three conditions are satisfied.

For example, cubical type theory as appears in \cite{coquandcubicaltt}
would be an example of a type theory satisfying \ref{condshott} and
\ref{condscomputes}. In \cite{hubercanonicity}, Huber showed that
cubical type theory does satisfy canonicity and suggests that the
technique could be extended to also show the other good computational
properties hold.

Earlier versions of cubical type theory that feature the regularity
condition are likely examples of theories satisfying \ref{condspathid}
and \ref{condscomputes}.

One approach to obtaining a theory satisfying conditions
\ref{condspathid} and \ref{condshott} is to build on work by Isaev in
\cite{isaevmodelstr}. This contained a definition of a type theory
$\mathrm{coe}_1 + \sigma + \mathrm{Path} + \mathrm{wUA}$, with a built
in notion of path type and coercion where coercion
satisfies a computation rule denoted $\sigma$, akin to
regularity, that allows one to implement identity types as path
types. It also satisfies a weak (but computationally meaningful)
version of univalence, denoted $\mathrm{wUA}$. However, no claim is
made regarding decidability of type checking, canonicity or strong
normalisation.

\subsection{Towards a Proof that Path Types are Identity Types}
\label{sec:towards-proof-that}

Although the aim of this paper was towards finding counterexamples, we
note that the hypothesis in the previous section remains just a
hypothesis and so could easily be false.

In particular, for each of the three examples satisfying only two of
the conditions, there is the possibility that the hypothesis can be
falsified by showing that in fact the third remaining condition does
hold. Namely, one could show that the hypothesis is false through any
of the following.
\begin{enumerate}
\item Showing that cubical type theory can be extended with extra
  computational rules that allow us to use path types as identity types,
  while retaining its good computational properties.
\item Showing that in fact it is possible to construct a univalent
  universe in cubical type theory with a regularity axiom.
\item Showing that Isaev's
  $\mathrm{coe}_1 + \sigma + \mathrm{Path} + \mathrm{wUA}$ does have
  decidable type checking, strong normalisation and canonicity, and
  moreover this remains true if it is extended with a universe
  satisfying full univalence and with higher inductive types.
\end{enumerate}

\bibliographystyle{abbrv}
\bibliography{mybib}{}

\begin{thebibliography}{10}

\bibitem{abss}
S.~Awodey, C.~Butz, A.~Simpson, and T.~Streicher.
\newblock Relating first-order set theories, toposes and categories of classes.
\newblock {\em Annals of Pure and Applied Logic}, 165(2):428 -- 502, 2014.

\bibitem{awodeywarren}
S.~Awodey and M.~A. Warren.
\newblock Homotopy theoretic models of identity types.
\newblock {\em Mathematical Proceedings of the Cambridge Philosophical
  Society}, 146:45--55, 1 2009.

\bibitem{bauereqsptte}
A.~Bauer.
\newblock A relationship between equilogical spaces and type two effectivity.
\newblock {\em Mathematical Logic Quarterly}, 48(S1):1--15, 2002.

\bibitem{bchcubicalsets}
M.~Bezem, T.~Coquand, and S.~Huber.
\newblock {A Model of Type Theory in Cubical Sets}.
\newblock In R.~Matthes and A.~Schubert, editors, {\em 19th International
  Conference on Types for Proofs and Programs (TYPES 2013)}, volume~26 of {\em
  Leibniz International Proceedings in Informatics (LIPIcs)}, pages 107--128,
  Dagstuhl, Germany, 2014. Schloss Dagstuhl--Leibniz-Zentrum fuer Informatik.

\bibitem{coquandcubicaltt}
C.~Cohen, T.~Coquand, S.~Huber, and A.~M{\"o}rtberg.
\newblock {Cubical Type Theory: A Constructive Interpretation of the Univalence
  Axiom}.
\newblock In T.~Uustalu, editor, {\em 21st International Conference on Types
  for Proofs and Programs (TYPES 2015)}, volume~69 of {\em Leibniz
  International Proceedings in Informatics (LIPIcs)}, pages 5:1--5:34,
  Dagstuhl, Germany, 2018. Schloss Dagstuhl--Leibniz-Zentrum fuer Informatik.

\bibitem{gambinogarner}
N.~Gambino and R.~Garner.
\newblock The identity type weak factorisation system.
\newblock {\em Theoretical Computer Science}, 409(1):94 -- 109, 2008.

\bibitem{gambinosattlerpi}
N.~Gambino and C.~Sattler.
\newblock The frobenius condition, right properness, and uniform fibrations.
\newblock {\em Journal of Pure and Applied Algebra}, 221(12):3027 -- 3068,
  2017.

\bibitem{garnersmallobject}
R.~Garner.
\newblock Understanding the small object argument.
\newblock {\em Applied Categorical Structures}, 17(3):247--285, 2009.

\bibitem{licataregdiscussion}
Online discussion of regularity axiom and univalence.
\newblock Available at
  \url{https://groups.google.com/d/msg/homotopytypetheory/oXQe5u_Mmtk/3HEDk5g5uq4J}
  (obtained 19/6/17).

\bibitem{hofmannstreicherliftuniv}
M.~Hofmann and T.~Streicher.
\newblock Lifting {G}rothendieck universes.
\newblock Available at
  \url{https://www2.mathematik.tu-darmstadt.de/~streicher/NOTES/lift.pdf}
  (obtained 4/7/18), 1997.

\bibitem{hubercanonicity}
S.~Huber.
\newblock Canonicity for cubical type theory.
\newblock {\em Journal of Automated Reasoning}, Jun 2018.

\bibitem{isaevmodelstr}
V.~Isaev.
\newblock Model structures on categories of models of type theories.
\newblock {\em Mathematical Structures in Computer Science}, page 1–28, 2017.

\bibitem{kectanonex}
N.~Kraus, M.~Escard\'{o}, T.~Coquand, and T.~Altenkirch.
\newblock {Notions of Anonymous Existence in Martin-L\"of Type Theory}.
\newblock {\em {Logical Methods in Computer Science}}, {Volume 13, Issue 1},
  Mar. 2017.

\bibitem{lops}
D.~R. Licata, I.~Orton, A.~M. Pitts, and B.~Spitters.
\newblock {Internal Universes in Models of Homotopy Type Theory}.
\newblock In H.~Kirchner, editor, {\em 3rd International Conference on Formal
  Structures for Computation and Deduction (FSCD 2018)}, volume 108 of {\em
  Leibniz International Proceedings in Informatics (LIPIcs)}, pages
  22:1--22:17, Dagstuhl, Germany, 2018. Schloss Dagstuhl--Leibniz-Zentrum fuer
  Informatik.

\bibitem{lumsdainecofibtt}
P.~L. Lumsdaine.
\newblock Model structures from higher inductive types.
\newblock Available at
  \url{http://peterlefanulumsdaine.com/research/Lumsdaine-Model-strux-from-HITs.pdf}.

\bibitem{mccarty}
D.~C. McCarty.
\newblock {\em Realizability and Recursive Mathematics}.
\newblock PhD thesis, Ohio State University, 1984.

\bibitem{pittsortoncubtopos}
I.~Orton and A.~M. Pitts.
\newblock Axioms for modelling cubical type theory in a topos.
\newblock In J.-M. Talbot and L.~Regnier, editors, {\em 25th EACSL Annual
  Conference on Computer Science Logic ({CSL} 2016)}, volume~62 of {\em Leibniz
  International Proceedings in Informatics (LIPIcs)}, pages 24:1--24:19,
  Dagstuhl, Germany, 2016. Schloss Dagstuhl--Leibniz-Zentrum f\"ur Informatik.

\bibitem{rathjenswanlif}
M.~Rathjen and A.~W. Swan.
\newblock Lifschitz realizability as a topological construction.
\newblock arXiv:1806.10047, June 2018.

\bibitem{richmanllpon}
F.~Richman.
\newblock Polynomials and linear transformations.
\newblock {\em Linear Algebra and its Applications}, 131:131--137, 1990.

\bibitem{shulmanelegantreedy}
M.~Shulman.
\newblock The univalence axiom for elegant reedy presheaves.
\newblock {\em Homology, Homotopy and Applications}, 17(2):81--106, 2015.

\bibitem{streicherunivtop}
T.~Streicher.
\newblock Universes in toposes.
\newblock Available at
  \url{https://www2.mathematik.tu-darmstadt.de/~streicher/NOTES/UniTop.pdf}
  (obtained 1/7/18), 2004.

\bibitem{swannomawfs}
A.~W. Swan.
\newblock An algebraic weak factorisation system on 01-substitution sets:\ a
  constructive proof.
\newblock {\em Journal of Logic and Analysis}, 8, December 2016.

\bibitem{swanliftprob}
A.~W. Swan.
\newblock Lifting problems in {G}rothendieck fibrations.
\newblock arXiv:1802.06718, February 2018.

\bibitem{swanwtypered}
A.~W. Swan.
\newblock {W}-types with reductions and the small object argument.
\newblock arXiv:1802.07588, February 2018.

\bibitem{uemuracubasm}
T.~Uemura.
\newblock Cubical assemblies and the independence of the propositional resizing
  axiom.
\newblock arXiv:1803.06649v2, April 2018.

\bibitem{hottbook}
{Univalent Foundations Program}.
\newblock {\em Homotopy Type Theory: Univalent Foundations of Mathematics}.
\newblock \url{http://homotopytypetheory.org/book}, Institute for Advanced
  Study, 2013.

\bibitem{vdbergfrumin}
B.~van~den Berg and D.~Frumin.
\newblock A homotopy-theoretic model of function extensionality in the
  effective topos.
\newblock arXiv:1701.08369, January 2017.

\bibitem{vanoosten}
J.~van Oosten.
\newblock {\em Realizability: An Introduction to its Categorical Side}, volume
  152 of {\em Studies in Logic and the Foundations of Mathematics}.
\newblock Elsevier, 2008.

\end{thebibliography}

\end{document}